\theoremstyle{plain}
\newtheorem{thm}{Theorem}[section]
\newtheorem{prop}[thm]{Proposition}
\newtheorem{cor}[thm]{Corollary}
\newtheorem{lem}[thm]{Lemma}
\theoremstyle{definition}
\newtheorem{defn}[thm]{Definition}
\theoremstyle{remark}
\newtheorem{rem}[thm]{Remark}
\newtheorem{expl}[thm]{Example}
\numberwithin{equation}{section}
\newcommand{\sN}{\mathcal{N}}
\newcommand{\sS}{\mathcal{S}}
\newcommand{\bt}{\mathbf{t}}
\newcommand{\bs}{\mathbf{s}}
\newcommand{\br}{\mathbf{r}}
\newcommand{\wq}{\widetilde{q}}
\newcommand{\wX}{\widetilde{X}}
\newcommand{\wL}{\widetilde{L}}
\newcommand{\wrho}{\widetilde{\rho}}
\newcommand{\hy}{\widehat{y}}
\newcommand{\hrho}{\widehat{\rho}}
\newcommand{\Gh}{\widehat{G}}
\newcommand{\CC}{\mathbb{C}}
\newcommand{\NN}{\mathbb{N}}
\newcommand{\QQ}{\mathbb{Q}}
\newcommand{\ZZ}{\mathbb{Z}}
\newcommand{\id}{\textup{id}}
\newcommand{\Hom}{\textup{Hom}}
\newcommand{\ra}{\rightarrow}
\newcommand{\xra}{\xrightarrow}
\newcommand{\lra}{\longrightarrow}
\newcommand{\co}{\colon\!}
\newcommand{\blank}{\textup{---}}
\newcommand{\divides}{\; | \;}
\newcommand{\G}{\textup{G}}
\newcommand{\TOP}{\textup{TOP}}
\newcommand{\Gsign}{\textup{G-sign}}
\renewcommand{\mod}{\textup{mod }}
\newcommand{\res}{\textup{res}}
\renewcommand{\max}{\textup{max}}
\newcommand{\smin}{\smallsetminus}
\newcommand{\RhG}{R_{\widehat G}}
\newcommand{\RhGp}{R_{\widehat G}^+}
\newcommand{\RhGm}{R_{\widehat G}^-}
\newcommand{\pr}{\textup{pr}}
\title[On Fake lens spaces]{On fake lens spaces with the fundamental
group \\ of order a power of $2$}
\author{Tibor Macko, Christian Wegner}
\subjclass[2000]{Primary: 57R65, 57S25}
\keywords{lens space, structure set, $\rho$-invariant, normal
invariants, surgery}
\address{Mathematisches Institut \\ Universit\"at M\"unster \\
Einsteinstra{\ss}e 62 \\ M\"unster, D-48149 \\ Germany \\ and
Matematick\'y \'Ustav SAV \\ \v Stef\'anikova 49 \\ Bratislava,
SK-81473 \\ Slovakia} \email{macko@math.uni-muenster.de}
\address{Mathematisches Institut \\ Universit\"at M\"unster \\
Einsteinstra{\ss}e 62 \\ M\"unster, D-48149 \\ Germany}
\email{c.wegner@uni-muenster.de}
\thanks{Both authors are supported by SFB 478 Geometrische Strukturen in der Mathematik, M\"unster.}
\begin{document}

\maketitle

\begin{abstract}
We present a classification of fake lens spaces of dimension $\geq
5$ which have as fundamental group the cyclic group of order $N = 2^K$, in that we extend
the results of Wall and others in the case $N = 2$.
\end{abstract}


\section*{Introduction}


A {\it fake lens space} is the orbit space of a free action of a
finite cyclic group $G$ on a sphere $S^{2d-1}$. It is a
generalization of the notion of a {\it lens space} which is the
orbit space of a free action which comes from a unitary
representation. The classification of lens spaces is a classical
topic in algebraic topology and algebraic $K$-theory well explained
for example in \cite{Milnor(1966)}. For the classification of fake
lens spaces in dimension $\geq 5$ methods of surgery theory are
especially suitable. The classification of fake lens spaces with $G$
of order $N = 2$ or $N$ odd was obtained and published in the books
\cite{Wall(1999)}, \cite{LdM(1971)}. Since then, the problem
remained open for $N \neq 2$ even. In this paper we address the
classification for $N = 2^K$.

One reason why the classification for all $N$ was not finished in
\cite{Wall(1999)} seems to be that the so-called $L$-groups $L^s_n
(G)$ for $G = \ZZ_N$ were unknown for $N$ even. This is not the case
anymore, see for example \cite{Hambleton-Taylor(2000)}. Using this
additional information and the general methods of Wall from
\cite[chapter 14]{Wall(1999)} we reduce the classification question
to a problem in the representation theory of $G$. The main
contribution of the present paper is that we develop calculational
methods for solving this rather complicated problem and we obtain
the solution for $N = 2^K$. 

The classification of fake lens spaces up to simple homotopy
equivalence for all $N \in \NN$ via Reidemeister torsion is
described in \cite[chapter 14E]{Wall(1999)}. The desired
homeomorphism classification within a simple homotopy type can be
formulated in terms of the {\it simple structure set} $\sS^s(X)$ of
a closed $n$-manifold $X$. An element of $\sS^s(X)$ is represented
by a simple homotopy equivalence $f \co M \ra X$ from a closed
$n$-manifold $M$. Two such $f \co M \ra X$, $f' \co M' \ra X$ are
equivalent if there exists a homeomorphism $h \co M \ra M'$ such
that $f' \circ h \simeq f$. The simple structure set $\sS^s(X)$ is a
priori just a pointed set with the base point $\id \co X \ra X$.
However, it can also be endowed with a preferred structure (in some
sense) of an abelian group (see \cite[chapter 18]{Ranicki(1992)}).

In general the simple structure set of an $n$-manifold for $n \geq
5$ can be determined by examining the surgery exact sequence which
is recalled below as (\ref{ses}). Besides determining $\sS^s (X)$ it
is also important to find invariants that distinguish its elements.
In fact the calculation of $\sS^s (X)$ is often conducted by
combining the surgery exact sequence with such invariants. This is
the case also for fake lens spaces. Here it follows from the
calculations of Wall in \cite[chapter 14E]{Wall(1999)} that the
simple structure set is detected by the $\rho$-invariant of
\cite{Atiyah-Singer-III(1968)} and \cite[chapter 14B]{Wall(1999)},
and by the so-called normal invariants. Our main Theorem
\ref{main-thm} calculates the simple structure set explicitly when
$N=2^K$. This should be seen as an improvement of the detecting
result of Wall. This interpretation follows from Corollary
\ref{main-thm-2} which says that if $N = 2^K$ there is another
collection of invariants which yields a one-to-one correspondence.
The collection contains the $\rho$-invariant as before, but the
other invariants are new. They depend on a certain choice and
certainly a geometric interpretation would still be
desirable.

Another issue that arises is the action of the group of simple
homotopy equivalences $G^s (X)$ of a manifold $X$ on $\sS^s (X)$ by
post-composition. The orbits of this action are the homeomorphism
types of manifolds simple homotopy equivalent to $X$ rather than
homeomorphism types of manifolds equipped with a simple homotopy equivalence to $X$. Hence it is an interesting question to describe the action. Following Wall one can slightly modify the question and study the polarized homeomorphism types of polarized fake lens spaces. These are fake lens spaces equipped with a choice of orientation and a choice of a generator of the fundamental group. Corollary \ref{main-thm-3} describes this classification.


\section{Statement of results}


\begin{defn}
A  {\it fake lens space} $L^{2d-1}(\alpha)$ is a manifold obtained
as the orbit space of a free action $\alpha$ of the group $G =
\ZZ_N$ on $S^{2d-1}$.
\end{defn}

\noindent The fake lens space $L^{2d-1}(\alpha)$ is a
$(2d-1)$-dimensional manifold with $\pi_1 (L^{2d-1}(\alpha)) \cong G
= \ZZ_N$ and universal cover $S^{2d-1}$. The main theorem in this
paper is:

\begin{thm}\label{main-thm}
Let $L^{2d-1}(\alpha)$ be a fake lens space with $\pi_1
(L^{2d-1}(\alpha)) \cong \ZZ_N$ where $N = 2^K$ and $d \geq 3$. Then
we have
\begin{equation} \label{sset-calc}
\sS^s (L^{2d-1}(\alpha)) \cong \bar \Sigma \oplus \bar T \cong \bar
\Sigma \oplus \bigoplus_{i=1}^{c} \ZZ_2 \oplus \bigoplus_{i=1}^{c}
\ZZ_{2^{\min\{K,2i\}}}
\end{equation}
where $\bar \Sigma$ is a free abelian group of rank $N/2-1$ if
$d=2e+1$ and $N/2$ if $d=2e$ and $c = \lfloor (d-1)/2 \rfloor$.
\end{thm}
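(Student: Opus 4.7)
The plan is to analyze the topological surgery exact sequence
\begin{equation*}
L^s_{2d+1}(\ZZ[G]) \xrightarrow{\omega} \sS^s(L^{2d-1}(\alpha)) \xrightarrow{\eta} \sN(L^{2d-1}(\alpha)) \xrightarrow{\theta} L^s_{2d-1}(\ZZ[G])
\end{equation*}
for $X = L^{2d-1}(\alpha)$ with $G = \ZZ_N$, $N = 2^K$, and to piece together the answer using the $\rho$-invariant to pin down the free part. As input I would assume the Hambleton--Taylor computation of $L^s_\ast(\ZZ[\ZZ_{2^K}])$ (free part detected by the multisignature, torsion part by Arf-type invariants) together with Wall's framework from \cite[ch.~14]{Wall(1999)}.

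First I would compute $\sN(L^{2d-1}(\alpha))$ explicitly. The normal invariant set is a homotopy-theoretic object, $[L^{2d-1}(\alpha), \G/\TOP]$, and after localizing at $2$ one can read it off from the Sullivan splitting $\G/\TOP_{(2)} \simeq \prod_i K(\ZZ_{(2)}, 4i) \times K(\ZZ_2, 4i+2)$ combined with the well-known $2$-local cohomology of $L^{2d-1}(\alpha)$ (the same as that of the model lens space). This produces a direct sum of copies of $\ZZ_2$ and $\ZZ_{2^j}$, and a careful count of the summands in the range $1 \le i \le c = \lfloor (d-1)/2 \rfloor$ should already match the shape of the torsion group $\bar T$ in (\ref{sset-calc}), with the exponent $\min\{K,2i\}$ forced by the order of the $2$-primary cohomology classes of $L^{2d-1}(\alpha)$ against which the $K(\ZZ_{(2)},4i)$ factors are evaluated.

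Next I would analyze the surgery obstruction map $\theta$. Its multisignature component can be read off at the level of chain complexes by pairing normal invariants against the irreducible characters of $G$, and Wall's reduction in \cite[ch.~14E]{Wall(1999)} expresses both $\ker\theta$ and $\theta(\sN)$ in terms of representation-theoretic data on $R_G^\pm$. Combined with the Hambleton--Taylor description of $L^s_{2d-1}(\ZZ[G])$ and $L^s_{2d+1}(\ZZ[G])$, this identifies $\ker\theta$ with a submodule visibly of the form given by $\bar T$ and identifies $\im\omega$ as a free summand of rank $N/2 - 1$ or $N/2$ depending on the parity of $d$. The $\rho$-invariant (which is additive under the natural group structure on $\sS^s$ and whose image in $\QQ R_G^+$ is known to span a free abelian group of exactly this rank) then provides a splitting of the extension $0 \to \im\omega \to \sS^s \to \ker\theta \to 0$, yielding the direct sum decomposition $\sS^s \cong \bar\Sigma \oplus \bar T$ with the asserted ranks and torsion orders.

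The main obstacle will be the middle step: making the representation-theoretic description of $\ker\theta$ and of the extension class precise for $N = 2^K$. The odd case and the $N=2$ case in \cite{Wall(1999)} are simpler because the $2$-torsion of $L_\ast(\ZZ[G])$ is either absent or very small; for general $2$-power $N$ one has substantial $2$-torsion in both $L^s_{2d-1}$ and $L^s_{2d+1}$, and one must control the interaction of the Arf-type summands with the $K(\ZZ_2,\ast)$ summands of $\sN$ to extract the clean formula $\bigoplus \ZZ_{2^{\min\{K,2i\}}}$. This is precisely the calculational problem, alluded to in the introduction, that the paper develops methods to handle.
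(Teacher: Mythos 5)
The key claim in your second paragraph is wrong, and it is wrong in a way that hides the genuine difficulty of the theorem. You write that the exponents $\min\{K,2i\}$ are ``forced by the order of the $2$-primary cohomology classes of $L^{2d-1}(\alpha)$ against which the $K(\ZZ_{(2)},4i)$ factors are evaluated.'' But $H^{4i}(L^{2d-1}(\alpha);\ZZ)\cong\ZZ_{2^K}$ for every $i$ in the relevant range --- the order is $N=2^K$ in every degree $4i$, with no dependence on $i$. Consequently the (reduced) normal invariant group is $\widetilde\sN(L^{2d-1}(\alpha))\cong\bigoplus_{i=1}^{c}\ZZ_{2^K}\oplus\bigoplus_{i=1}^{c}\ZZ_2$, and the torsion summand $\bar T$ of $\sS^s$ is a \emph{proper subgroup} of this, not equal to it. The truncated exponents $\min\{K,2i\}$ arise because $\bar T$ is the kernel of the map $[\wrho]\colon\widetilde\sN\to\QQ\RhG^{(-1)^d}/4\cdot\RhG^{(-1)^d}$ induced by the $\rho$-invariant, i.e.\ the subgroup of normal invariants that can be represented by fake lens spaces whose $\rho$-invariant vanishes modulo $4\cdot\RhG$. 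Computing this kernel is the entire content of the paper's Section~4 (the $w_l$-valuation machinery, the polynomials $p_k$, $q_n$, $r_n^\pm$, and Theorem~\ref{thm:A=B}); it is not read off from any cohomology computation of $L^{2d-1}(\alpha)$.

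A secondary imprecision: you describe the splitting as splitting the extension $0\to\im\omega\to\sS^s\to\ker\theta\to 0$, but that extension does \emph{not} split into $\bar\Sigma\oplus\bar T$ (for one thing, $\ker\theta=\widetilde\sN$ contains summands $\ZZ_{2^K}$, not $\ZZ_{2^{\min\{K,2i\}}}$). The paper instead maps this extension into the representation-theoretic sequence $0\to 4\cdot\RhG^{(-1)^d}\to\QQ\RhG^{(-1)^d}\to\QQ\RhG^{(-1)^d}/4\cdot\RhG^{(-1)^d}\to 0$ via $(\Gsign,\wrho,[\wrho])$, observes that the resulting right-hand square is a pullback (after restricting to images), deduces $\bar T\cong\ker(\wrho\colon\sS^s\to\bar\Sigma)$, and only \emph{then} gets a splittable extension $0\to\bar T\to\sS^s\to\bar\Sigma\to 0$ with $\bar\Sigma$ free. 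For this one must first prove that $\wrho$ is actually a homomorphism on $\sS^s(L^{2d-1}(\alpha))$ with respect to Ranicki's group structure --- itself a nontrivial argument in the paper, using the join/suspension maps, the fibering over $\CC P^{d-1}$, and Wall's explicit formulas. You assume additivity of $\wrho$ without comment. Your final paragraph correctly senses that ``the middle step'' is where the work is, but the work is not an ``interaction of Arf-type summands with $K(\ZZ_2,*)$ summands'': the $\ZZ_2$-summands of $\widetilde\sN$ lie entirely in $\bar T$ and cause no trouble at all (Theorem~\ref{T-2}). The real problem is computing which multiples of the $\bt_{4i}\in\ZZ_{2^K}$ lie in $\ker[\wrho]$, and that is a question about divisibility of $\rho$-invariant expressions such as $8\cdot f'_k\cdot f^m\cdot q(f^2)$ by $4$ in $\ZZ[\chi]/I\langle K\rangle$, not about $L$-group torsion.
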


The isomorphism (\ref{sset-calc}) has an interpretation in terms of
known geometric invariants. These are the {\it reduced $\rho$-invariant} and the {\it normal invariants} from
surgery theory as follows.

The reduced $\rho$-invariant is a homomorphism
\[
\wrho \co \sS^s (L^{2d-1}(\alpha)) \lra \QQ \RhG^{(-1)^d}
\]
where the target is the underlying abelian group of the
$(-1)^d$-eigenspace of the rationalized complex representation ring
of $G$ modulo the ideal generated by the regular representation. The
group $\bar \Sigma$ is defined as the image of $\wrho$.

The normal invariant is a homomorphism $\eta \co \sS^s (L^{2d-1}(\alpha)) \ra \sN (L^{2d-1}(\alpha))$ with the target the group of normal invariants from surgery theory, which is easily calculable. The
reduced $\rho$-invariant induces the homomorphism
\[
[\wrho] \co \widetilde \sN (L^{2d-1}(\alpha)) \lra \QQ
\RhG^{(-1)^d}/4 \cdot \RhG^{(-1)^d}.
\]
Here the source is the subgroup of $\sN (L^{2d-1}(\alpha))$ given by
the image of $\eta$ and in the target we have the quotient group
modulo the subgroup of elements in the $(-1)^d$-eigenspace of the
representation ring which are divisible by $4$. We use formulas of
Wall to show relations between the invariants $\wrho$ and $\eta$ in
Proposition \ref{rho-formula-lens-sp}.

The group $\bar T$ is defined as the kernel of $[\wrho]$. In the
proof of Proposition \ref{how-to-split-alpha-k} we describe a map
$\lambda \co \bar T \ra \sS^s (L^{2d-1}(\alpha))$ which fits into a
short exact sequence
\[
0 \lra \bar T \xra{\lambda} \sS^s (L^{2d-1}(\alpha)) \xra{\wrho}
\bar \Sigma \lra 0.
\]
Since $\bar \Sigma$ is a free abelian group the sequence splits and
we obtain the isomorphism of Theorem \ref{main-thm}. We denote the projection map on $\bar T$ by $\br \co \sS^s
(L^{2d-1}(\alpha)) \ra \bar T$. Our main technical result is the calculation of $\bar T$ in Theorems \ref{T-2}, \ref{T-N}. It tells us that it is a direct sum of copies
of $2$-primary cyclic groups which are indexed by $1 \leq i \leq 2c
$. We denote the projection on the $i$-th summand by $\br_{2i}$.
Putting these considerations together we obtain the following
corollary.

\begin{cor} \label{main-thm-2}
Let $L^{2d-1}(\alpha)$ be a fake lens space with $\pi_1
(L^{2d-1}(\alpha)) \cong \ZZ_N$ where $N = 2^K$ and $d \geq 3$.
There exists a collection of invariants
\[
\br_{4i} \co \sS^s (L^{2d-1}(\alpha)) \lra \ZZ_{2^{\min\{K,2i\}}}
\quad \textup{and} \quad \br_{4i-2} \co \sS^s (L^{2d-1}(\alpha))
\lra \ZZ_2
\]
where $1 \leq i \leq c = \lfloor (d-1)/2 \rfloor$ which together
with the $\wrho$-invariant induce a one-to-one correspondence
between elements $a \in \sS^s(L^{2d-1}(\alpha))$ and
\begin{enumerate}
\item $\wrho (a) \in \bar \Sigma \subset \QQ \RhG^{(-1)^d}$
\item $\br_{2i} (a) \in \ZZ_2, \ZZ_{2^{\min\{K,2i\}}}$.
\end{enumerate}
\end{cor}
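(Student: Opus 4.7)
The plan is to deduce Corollary \ref{main-thm-2} directly from Theorem \ref{main-thm} and the descriptions of $\wrho$ and $\br$ given in the discussion before that theorem; the corollary is essentially a repackaging statement that names the concrete invariants implementing the abstract isomorphism $\sS^s(L^{2d-1}(\alpha)) \cong \bar\Sigma \oplus \bar T$.

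First I would invoke the split short exact sequence
\[
0 \lra \bar T \xra{\lambda} \sS^s(L^{2d-1}(\alpha)) \xra{\wrho} \bar \Sigma \lra 0
\]
constructed just before the statement of the corollary, together with its retraction $\br \co \sS^s(L^{2d-1}(\alpha)) \to \bar T$. The pair $(\wrho,\br)$ then yields an isomorphism onto $\bar \Sigma \oplus \bar T$, so each $a \in \sS^s(L^{2d-1}(\alpha))$ is determined by $(\wrho(a),\br(a))$, with $\wrho(a)$ ranging freely over the free abelian group $\bar\Sigma$ by definition of $\bar\Sigma$ as the image of $\wrho$.

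Second, I would apply the decomposition of $\bar T$ into $2c$ cyclic $2$-primary summands supplied by Theorems \ref{T-2} and \ref{T-N}, whose respective projections are $\br_{2j}$ for $1 \leq j \leq 2c$. Comparing with the formula in Theorem \ref{main-thm}, the odd-indexed summands (those with $j = 2i-1$) are $\ZZ_2$, while the even-indexed ones (those with $j = 2i$) are $\ZZ_{2^{\min\{K,2i\}}}$. Relabeling $\br_{4i-2} := \br_{2(2i-1)}$ and $\br_{4i} := \br_{2(2i)}$ for $1 \leq i \leq c$ reproduces exactly the maps appearing in the statement. The joint map
\[
a \;\longmapsto\; \bigl(\wrho(a),\; \br_{2}(a),\; \br_{4}(a),\; \ldots,\; \br_{4c}(a)\bigr)
\]
is therefore a bijection from $\sS^s(L^{2d-1}(\alpha))$ onto the direct sum of $\bar\Sigma$ with the cyclic groups listed, which is the asserted one-to-one correspondence.

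There is no genuine obstacle separate from Theorem \ref{main-thm}: the substantive work is the construction of the splitting $\lambda$ in Proposition \ref{how-to-split-alpha-k} together with the calculation of $\bar T$ in Theorems \ref{T-2} and \ref{T-N}. Once those results are in place, the passage to the list of invariants $(\wrho, \br_{4i-2}, \br_{4i})$ is pure bookkeeping, and Corollary \ref{main-thm-2} follows immediately.
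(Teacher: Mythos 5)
Your proposal is correct and matches the paper's own reasoning exactly: the paper also deduces the corollary by combining the split short exact sequence $0 \to \bar T \to \sS^s(L^{2d-1}(\alpha)) \to \bar\Sigma \to 0$ (with its chosen retraction $\br$) with the decomposition of $\bar T$ into $2c$ cyclic $2$-primary summands from Theorems \ref{T-2} and \ref{T-N}, and then relabels the component projections as $\br_{4i-2}$ and $\br_{4i}$. There is nothing to add.
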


The invariants $\br_{4i-2}$ are the normal invariants $\bt_{4i-2}$
from \cite[chapter 14E]{Wall(1999)} and the invariants $\br_{4i}$ are related to the invariants $\bt_{4i}$ from  \cite[chapter 14E]{Wall(1999)}, but they are not the same. The invariants $\bt_{2i}$ can in principle be calculated using characteristic classes (see \cite{Morgan-Sullivan(1974)}) and for the lens spaces this has been done in \cite{Young(1998)}, but the calculation does not include fake lens spaces. Admittedly, a similar `calculational' description of the invariants $\br_{4i}$ would be desirable. We suspect that this might be related to calculations of codimension $2$ surgery obstructions to `desuspension' of fake lens spaces (see subsection \ref{subsec:join}). These are elements of the so-called $LS$-groups of \cite[chapter 11]{Wall(1999)}, \cite[chapter 7]{Ranicki(1981)}. We plan to address this aspect in a future work.

The above results are about classification within a simple
homotopy type. As stated in the introduction the simple homotopy
types of fake lens spaces can be distinguished by the Reidemeister
torsion which is a unit in $\QQ R_G$, the rational group ring of $G$
modulo the ideal generated by the norm element. 

To obtain classification of fake lens spaces rather than
classification of elements of the simple structure set we follow Wall and work with polarized fake lens spaces, see Definition \ref{def-pol-lens-spc}. The simple homotopy type of a polarized lens space is given uniquely by its Reidemeister torsion as described in Proposition \ref{prop-simple-htpy-class}. That means that for two polarized fake lens spaces $L^{2d-1} (\alpha)$ and $L^{2d-1} (\beta)$ with the fundamental group $G$ there is a simple homotopy equivalence $f_{\alpha,\beta} \co L^{2d-1}(\alpha) \ra L^{2d-1} (\beta)$ of polarized fake lens spaces unique up to homotopy if and only if the Reidemeister torsions of $L^{2d-1}(\alpha)$ and $L^{2d-1}(\beta)$ coincide. This $f_{\alpha,\beta}$ gives us an element of the simple structure set $\sS^s (L^{2d-1}(\beta))$. We can formulate the classification as follows:

\begin{cor} \label{main-thm-3}
Let $L^{2d-1} (\alpha)$ and $L^{2d-1} (\beta)$ be polarized lens
spaces with the fundamental group $G = \ZZ_N$, where $N = 2^K$ and
$d \geq 3$. There exists a polarized homeomorphism between $L^{2d-1}
(\alpha)$ and $L^{2d-1} (\beta)$ if and only if
\begin{enumerate}
 \item $\Delta (L^{2d-1}(\alpha)) = \Delta (L^{2d-1} (\beta))$,
 \item $\rho (L^{2d-1}(\alpha)) = \rho (L^{2d-1} (\beta))$,
 \item $\br_{2i} (f_{\alpha,\beta}) = 0$ for all $i$.
\end{enumerate}
\end{cor}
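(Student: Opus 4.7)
The plan is to reduce Corollary \ref{main-thm-3} to Corollary \ref{main-thm-2} via the simple homotopy equivalence $f_{\alpha,\beta}$, so that each of the three conditions on the right-hand side becomes the vanishing of one of the invariants $\wrho, \br_{2i}$ detecting the simple structure set. Both implications then follow from the uniqueness statement in Proposition \ref{prop-simple-htpy-class} together with the main calculation of $\sS^s(L^{2d-1}(\beta))$.

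For the ``only if'' direction, assume a polarized homeomorphism $h \co L^{2d-1}(\alpha) \ra L^{2d-1}(\beta)$ exists. Then (1) is immediate since Reidemeister torsion is a simple homotopy invariant, and (2) follows because the $\rho$-invariant depends only on the free $G$-manifold up to equivariant homeomorphism (with its polarization). For (3), by Proposition \ref{prop-simple-htpy-class} the simple homotopy equivalence $f_{\alpha,\beta}$ is unique up to homotopy among polarized maps, so $h \simeq f_{\alpha,\beta}$. Consequently $f_{\alpha,\beta}$ represents the basepoint $\id \in \sS^s (L^{2d-1}(\beta))$ and each invariant $\br_{2i}$ vanishes on it.

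For the ``if'' direction, assume (1)--(3). Condition (1) and Proposition \ref{prop-simple-htpy-class} produce the distinguished class $f_{\alpha,\beta} \in \sS^s(L^{2d-1}(\beta))$. I would then verify that $\wrho (f_{\alpha,\beta}) = 0$: the reduced $\rho$-invariant of a structure $[g \co M \ra X]$ is (modulo the regular representation) the difference $\rho(M) - \rho(X)$, so condition (2) forces $\wrho (f_{\alpha,\beta}) = 0$. Combined with (3), all the detecting invariants of Corollary \ref{main-thm-2} vanish on $f_{\alpha,\beta}$, which means $f_{\alpha,\beta}$ equals the basepoint of $\sS^s(L^{2d-1}(\beta))$. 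Hence $f_{\alpha,\beta}$ is homotopic to a homeomorphism $h$; because $f_{\alpha,\beta}$ was polarized, so is $h$.

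The main subtlety, and the one step I would expect to require the most care, is the identification $\wrho(f_{\alpha,\beta}) = \rho(L^{2d-1}(\alpha)) - \rho(L^{2d-1}(\beta))$ in $\QQ \RhG^{(-1)^d}$. This requires the geometric interpretation of $\wrho$ on the structure set and the fact that the quotient by the regular representation exactly absorbs the ambiguity introduced by passing from absolute $\rho$-invariants of polarized lens spaces to the relative quantity on a structure; this is the content of the formulas of Wall referenced around Proposition \ref{rho-formula-lens-sp}. Once this compatibility is in hand, the remainder of the argument is a direct application of Corollary \ref{main-thm-2}.
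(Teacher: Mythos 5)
Your proposal is correct and follows the same route the paper intends: Corollary \ref{main-thm-3} is presented as a direct consequence of Corollary \ref{main-thm-2} together with the discussion of polarized fake lens spaces and the distinguished class $f_{\alpha,\beta}$ just before its statement. One small remark: the identity $\wrho(f_{\alpha,\beta}) = \rho(L^{2d-1}(\alpha)) - \rho(L^{2d-1}(\beta))$ that you single out as the main subtlety is in fact immediate from the definition of $\wrho$ on the structure set given at the start of the ``Properties'' subsection of Section \ref{sec:rho-invariant} (namely $\wrho([g\co M\to X]) = \rho(M)-\rho(X)$); the formulas of Wall referenced near Proposition \ref{rho-formula-lens-sp} are used to establish that $\wrho$ is a homomorphism and to compute $[\wrho]$, not for this identification.
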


The paper is organized as follows. In section
\ref{sec:htpy-classification} we briefly recall the simple homotopy
classification of fake lens spaces. In section \ref{sec:ses} we
recall the general machinery of surgery theory and we describe the
known terms in the surgery exact sequence of the fake lens spaces.
In section \ref{sec:rho-invariant} we recall the definition and
properties of the $\rho$-invariant. Finally, in section
\ref{sec:calculation} we prove our main technical result which is
the calculation of the group $\bar T$. Sections
\ref{sec:htpy-classification}, \ref{sec:ses} and most of section
\ref{sec:rho-invariant} is a summary of known results. Our contribution is
concentrated in a part of section \ref{sec:rho-invariant} and in the
last section \ref{sec:calculation}.

We thank Diarmuid Crowley, Ian Hambleton and Andrew Ranicki for
useful comments.


\section{Homotopy classification} \label{sec:htpy-classification}


In this section we briefly recall without proofs the statements of
the homotopy and simple homotopy classification of fake lens spaces
from \cite[chapter 14E]{Wall(1999)}. Apart from definitions only
Corollary \ref{lens-spaces-give-all-htpy-types} is of importance for
the rest of the paper.

We start by introducing some notation for {\it lens spaces} which
are a special sort of fake lens spaces. Let $N \in \NN$, $\bar k =
(k_1, \ldots k_d)$, where $k_i \in \ZZ$ are such that $(k_i,N)=1$.
When $G = \ZZ_N$ define a representation $\alpha_{\bar k}$ of $G$ on
$\CC^d$ by $(z_1 \ldots , z_n) \mapsto (z_1 e^{2\pi i k_1/N},
\ldots, z_n e^{2\pi i k_d/N})$. Any free representation of $G$ on a
$d$-dimensional complex vector space is isomorphic to some
$\alpha_{\bar k}$. The representation $\alpha_{\bar k}$ induces a
free action of $G$ on $S^{2d-1}$ which we still denote $\alpha_{\bar
k}$.

\begin{defn} \label{defn-lens-space}
A {\it lens space} $L^{2d-1}(\alpha_{\bar k})$ is a manifold
obtained as the orbit pace of a free action $\alpha_{\bar k}$ of the
group $G = \ZZ_N$ on $S^{2d-1}$ for some $\bar k = (k_1, \ldots
k_d)$ as above.\footnote{In the notation of \cite[chapter
14E]{Wall(1999)} we have $L(\alpha_{\bar k}) =
L(N,k_1,\ldots,k_n)$.}
\end{defn}

The lens space $L^{2d-1}(\alpha_{\bar k})$ is a $(2d-1)$-dimensional
manifold with $\pi_1 (L^{2d-1}(\alpha_{\bar k})) \cong \ZZ_N$. Its
universal cover is $S^{2d-1}$, hence $\pi_i (L^{2d-1}(\alpha_{\bar
k})) \cong \pi_i (S^{2d-1})$ for $i \geq 2$. There exists a
convenient choice of a CW-structure for $L^{2d-1}(\alpha_{\bar k})$
with one cell $e_i$ in each dimension $0 \leq i \leq 2d-1$.
Moreover, we have $H_i (L^{2d-1}(\alpha_{\bar k})) \cong \ZZ$ when
$i = 0,2d-1$, $H_i (L^{2d-1}(\alpha_{\bar k})) \cong \ZZ_N$ when $0
< i < 2d-1$ is odd and $H_i (L^{2d-1}(\alpha_{\bar k})) \cong 0$
when $i \neq 0$ is even.

The classification of the lens spaces up to homotopy equivalence and
simple homotopy equivalence is presented for example in
\cite{Milnor(1966)}. The simple homotopy classification is stated in
terms of Reidemeister torsion which is a unit in $\QQ R_G$. This
ring is defined as $\QQ R_G = \QQ \otimes R_G$ with $R_G = \ZZ G /
\langle Z \rangle$ where $\ZZ G$ be the group ring of $G$ and
$\langle Z \rangle$ is the ideal generated by the norm element $Z$
of $G$. We also suppose that a generator $T$ of $G$ is chosen. There
is also an augmentation map $\varepsilon' \co R_G \ra \ZZ_N$
\cite[page 214]{Wall(1999)}. The homotopy classification is stated
in terms of a certain unit in $\ZZ_N$. These invariants also suffice
for the homotopy and simple homotopy classification of finite
CW-complexes $L$ with $\pi_1 (L) \cong \ZZ_N$ and with the universal
cover homotopy equivalent to $S^{2d-1}$ of which fake lens spaces
are obviously a special case. It is convenient to make the following
definition.
\begin{defn} \label{def-pol-lens-spc}
A {\it polarization} of a CW-complex $L$ as above is a pair $(T,e)$
where $T$ is a choice of a generator of $\pi_1 (L)$ and $e$ is a
choice of a homotopy equivalence $e \co \wL \ra S^{2d-1}$.
\end{defn}
\noindent Denote further by $L^{2d-1}(\alpha_k)$ the lens space
$L^{2d-1}(\alpha_{\bar k})$ with $\bar k = (1,\ldots,1,k)$. By
$L^i(\alpha_1)$ is denoted the $i$-skeleton of the lens space
$L^{2d-1}(\alpha_1)$. If $i$ is odd this is a lens space, if $i$ is
even this is a CW-complex obtained by attaching an $i$-cell to the
lens space of dimension $i-1$.
\begin{prop}{\cite[Theorem 14E.3]{Wall(1999)}}
\label{prop-simple-htpy-class}
Let $L$ be a finite CW-complex as above polarized by $(T,e)$. Then
there exists a simple homotopy equivalence
\[
h \co L \lra L^{2d-2}(\alpha_1) \cup_\phi e^{2d-1}
\]
preserving the polarization. It is unique up to homotopy and the
action of $G$. The chain complex differential on the right hand side
is given by $\partial_{2d-1} e^{2d-1} = e_{2d-2} (T-1) U$ for some
$U \in \ZZ G$ which maps to a unit $u \in R_G$. Furthermore, the
complex $L$ is a Poincar\'e complex.
\begin{enumerate}
\item The polarized homotopy types of such $L$ are in
one-to-one correspondence with the units in $\ZZ_N$. The
correspondence is given by $\varepsilon' (u) \in \ZZ_N$.
\item The polarized simple homotopy types of such $L$ are in
one-to-one correspondence with the units in $R_G$. The
correspondence is given by $u \in R_G$.
\end{enumerate}
\end{prop}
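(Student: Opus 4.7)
The plan is to build $L$ cell-by-cell via obstruction theory and then read off the top-cell boundary. Since $\wL \simeq S^{2d-1}$ we have $\pi_1(L)\cong\ZZ_N$ and $\pi_i(L)=0$ for $2\leq i\leq 2d-2$, so the $(2d-2)$-skeleton of $L$ is homotopy equivalent to a $(2d-2)$-dimensional model of $K(\ZZ_N,1)$. The polarization fixes the isomorphism $\pi_1(L)\cong G$, and by obstruction theory (the relevant obstruction groups vanish in this range) I would construct a cellular reference map $L^{2d-2}(\alpha_1)\ra L^{(2d-2)}$ realizing this isomorphism; it is automatically a homotopy equivalence, and by the standard structure of $\textup{Wh}$ on these skeleta it can be arranged to be simple. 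This presents $L$ as $L^{2d-2}(\alpha_1)\cup_\phi e^{2d-1}$ with $\phi$ the composite of the original top attaching map and the reference identification.

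Next I would compute the cellular chain complex of $\wL$. Each $C_i(\wL)\cong\ZZ G$, and for $1\leq i\leq 2d-2$ the boundaries alternate between multiplication by $T-1$ and by the norm $Z$, as in the standard lens space. In particular $\partial_{2d-2}$ is multiplication by $Z$, whose kernel is the augmentation ideal $(T-1)\ZZ G$. The condition $H_{2d-2}(\wL)=0$ forces $\partial_{2d-1}(e^{2d-1})=e_{2d-2}(T-1)U$ for some $U\in\ZZ G$, and the further condition $H_{2d-1}(\wL)\cong\ZZ$ is equivalent to multiplication by $U$ becoming an isomorphism of $R_G=\ZZ G/\langle Z\rangle$, i.e., the class $u$ of $U$ in $R_G$ is a unit. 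That $L$ is a Poincar\'e complex follows because $\wL\simeq S^{2d-1}$ with free cocompact $G$-action, so Poincar\'e duality descends to $L$.

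For the classification, two polarized complexes with the same $u\in R_G^\times$ have $\ZZ G$-chain isomorphic universal-cover chain complexes, and the isomorphism can be realized by a polarization-preserving simple homotopy equivalence extending the skeleton identification, because the obstruction to extending over the top cell matches exactly in $\ZZ G$. Conversely any such simple homotopy equivalence induces a simple chain isomorphism and yields the same $u$; the indeterminacy `up to the action of $G$' accounts for the freedom in the cellular lift of the top cell to $\wL$, which only multiplies $U$ by a power of $T$. Part (1) follows from (2) by dropping simplicity: an ordinary polarized homotopy equivalence may alter $u$ by a unit with nontrivial Whitehead torsion, and $\varepsilon'(u)\in\ZZ_N^\times$ is exactly what is preserved. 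The main obstacle I anticipate is the simple-versus-ordinary bookkeeping, both at the skeleton identification step and at the top-cell extension; this is the algebraic $K$-theory content of the theorem and is what makes the simple invariant land in $R_G^\times$ rather than in the coarser $\ZZ_N^\times$.
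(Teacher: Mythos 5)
The paper itself does not prove this proposition: Section 2 explicitly recalls the homotopy and simple homotopy classification ``without proofs'' from Wall's book, citing the statement as Wall's Theorem 14E.3. There is therefore no in-paper argument to compare against; I assess your attempt directly.

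The central gap is at the first step. From $\pi_i(L)=0$ for $2\le i\le 2d-2$ you infer that the $(2d-2)$-skeleton $L^{(2d-2)}$ is homotopy equivalent to a $(2d-2)$-dimensional model of $K(\ZZ_N,1)$, hence to $L^{2d-2}(\alpha_1)$, with the obstruction-theoretic reference map ``automatically a homotopy equivalence.'' This fails. The inclusion $L^{(2d-2)}\hookrightarrow L$ is a $\pi_i$-isomorphism only for $i<2d-2$, so you control $\pi_i(L^{(2d-2)})$ for $2\le i\le 2d-3$ but not $\pi_{2d-2}(L^{(2d-2)})$, which depends on the arbitrary CW structure of $L$: each $(2d-1)$-cell of $L$ contributes kernel to $\pi_{2d-2}(L^{(2d-2)})\to\pi_{2d-2}(L)=0$, so $L^{(2d-2)}$ can be very far from $L^{2d-2}(\alpha_1)$. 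Obstruction theory does produce a map $L^{2d-2}(\alpha_1)\to L$ since the coefficient groups $\pi_i(L)$ vanish for $2\le i\le 2d-3$, but that map is not an equivalence onto the skeleton of $L$. Your subsequent computation --- ``each $C_i(\wL)\cong\ZZ G$'' and the boundaries alternate between $T-1$ and the norm $Z$ --- already presupposes the minimal one-cell-per-dimension structure that is exactly what has to be produced, so the argument is circular. The substance of Wall's theorem is that $L$ can be brought into this minimal form by elementary expansions and collapses, equivalently that $C_*(\wL)$ is \emph{simple} chain equivalent over $\ZZ G$ to the standard truncated resolution with a degree-$(2d-1)$ free summand mapping by $(T-1)U$; this algebraic reduction is what you skip, and you yourself flag the simple-versus-ordinary bookkeeping as the main obstacle without addressing it. The later steps --- Poincar\'e duality descending from $S^{2d-1}$, the $G$-indeterminacy coming from the choice of cellular lift of the top cell, and deducing (1) from (2) by passing to $\varepsilon'(u)$ via the Whitehead group --- are reasonable outlines, but they rest on a foundation that has not been laid.
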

\noindent The existence of a fake lens space in the homotopy type of
such $L$ is addressed in \cite[Theorem 14E.4]{Wall(1999)}. Since the
units $\varepsilon' (u) \in \ZZ_N$ are exhausted by the lens spaces
$L^{2d-1}(\alpha_k)$ we obtain the following corollary.
\begin{cor} \label{lens-spaces-give-all-htpy-types}
For any fake lens space $L^{2d-1}(\alpha)$ there exists $k \in \NN$
and a homotopy equivalence
\[
h \co L^{2d-1}(\alpha) \lra L^{2d-1}(\alpha_k).
\]
\end{cor}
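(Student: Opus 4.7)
The plan is to invoke Proposition \ref{prop-simple-htpy-class}(1) directly. A fake lens space $L^{2d-1}(\alpha)$ is a closed manifold with $\pi_1 \cong \ZZ_N$ and universal cover $S^{2d-1}$, hence in particular a finite CW-complex of the type considered in that proposition. Fix any polarization $(T,e)$ on $L^{2d-1}(\alpha)$; Proposition \ref{prop-simple-htpy-class}(1) then attaches to it a well-defined unit $m := \varepsilon'(u_\alpha) \in \ZZ_N^\times$ which is a complete invariant of its polarized homotopy type.

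Next I would verify the parenthetical assertion preceding the corollary, namely that the one-parameter family $\{L^{2d-1}(\alpha_k) \co \gcd(k,N)=1\}$ with $\bar k = (1,\ldots,1,k)$ realizes every unit of $\ZZ_N$ as $k$ varies. For this one computes the invariant $\varepsilon'(u_k) \in \ZZ_N^\times$ directly from the standard $\ZZ_N$-equivariant CW-structure on $S^{2d-1}$ induced by $\alpha_k$: its cellular chain complex has top boundary involving the factor $(T^k - 1) = (T-1)(1 + T + \cdots + T^{k-1})$, so the element $U \in \ZZ G$ produced by the boundary formula $\partial_{2d-1}e^{2d-1} = e_{2d-2}(T-1)U$ in Proposition \ref{prop-simple-htpy-class} reduces in $R_G$ to a unit $u_k$ with $\varepsilon'(u_k) \equiv \pm k \pmod N$. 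This is essentially the classical Reidemeister-torsion computation for standard lens spaces recorded in \cite{Milnor(1966)}. As $k$ ranges over integers coprime to $N$ the map $k \mapsto \varepsilon'(u_k)$ is therefore a bijection onto $\ZZ_N^\times$.

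To conclude, pick $k$ coprime to $N$ with $\varepsilon'(u_k) = m$. Then $L^{2d-1}(\alpha)$ and $L^{2d-1}(\alpha_k)$ carry the same polarized homotopy invariant, so Proposition \ref{prop-simple-htpy-class}(1) supplies a polarized homotopy equivalence between them; forgetting polarizations yields the required $h \co L^{2d-1}(\alpha) \ra L^{2d-1}(\alpha_k)$. The only real technical step is the cellular calculation of $\varepsilon'(u_k)$ for the standard lens space, and once the augmentation is identified with $\pm k$ the corollary is immediate from Proposition \ref{prop-simple-htpy-class}.
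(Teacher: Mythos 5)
Your argument is correct and follows the same route the paper takes: the paper's entire justification for this corollary is the one-line observation that the units $\varepsilon'(u) \in \ZZ_N$ of Proposition~\ref{prop-simple-htpy-class}(1) are exhausted by the lens spaces $L^{2d-1}(\alpha_k)$, and your proposal simply spells out the cellular Reidemeister-torsion computation behind that remark (that for $\bar k = (1,\ldots,1,k)$ the top boundary gives $U = 1 + T + \cdots + T^{k-1}$ up to convention, hence $\varepsilon'(u_k) = \pm k^{\pm 1}$, a bijection of $\ZZ_N^\times$). The one minor imprecision is leaving the sign and possible inversion unresolved; it is harmless here since surjectivity of $k \mapsto \varepsilon'(u_k)$ onto $\ZZ_N^\times$ is all that is used.
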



\section{The surgery exact sequence} \label{sec:ses}


We proceed to the homeomorphism classification within a simple
homotopy type. This is the standard task of surgery theory whose
main tool is the surgery exact sequence computing the structure set
$\sS^s(X)$ for a given $n$-manifold $X$ with $n \geq 5$:
\begin{equation} \label{ses}
\cdots \ra \sN_\partial (X \times I) \xra{\theta} L^s_{n+1} (G)
\xra{\partial} \sS^s (X) \xra{\eta} \sN(X) \xra{\theta} L^s_n (G),
\end{equation}
where $G = \pi_1 (X)$. The other terms in the sequence are reviewed
below. We note that, since $\sS^s (X)$ is a priori only a pointed
set, the `exactness' is to be understood as described in
\cite[chapter 10]{Wall(1999)} or \cite[chapter 5]{Lueck(2001)}.
However, the sequence can also be made into an exact sequence of
abelian groups by the identification with the algebraic surgery
exact sequence of Ranicki as explained in \cite[chapter
18]{Ranicki(1992)}. We will make use of this structure since it
makes certain statements and proofs easier. However, our results can
be also formulated without this identification, in a less neat way
though.

By $\sN(X)$ in (\ref{ses}) is denoted the set of {\it normal
invariants} of $X$. An element of $\sN (X)$ is represented by a {\it
degree one normal map} $(f,b) \co M \ra X$ which consists of a map
$f \co M \ra X$ of oriented closed $n$-manifolds of degree $1$ and a
stable bundle map $b \co \nu_M \ra \xi$ from the stable normal
bundle of $M$ to a stable topological reduction $\xi$ of the stable
Spivak normal fibration of $X$. Two such degree one normal maps
$(f,b) \co M \ra X$, $(f',b') \co M' \ra X$ are equivalent in
$\sN(X)$ if there exists a degree one normal map $(F,B) \co (W,M,M')
\ra (X \times I, X \times 0,X \times 1)$ of manifolds with boundary
which restricts on the two ends to $(f,b)$, $(f',b')$ respectively.
Again this is a priori a set, with a base point $(\id,\id) \co X \ra
X$. However, the Pontrjagin-Thom construction gives a bijection
\begin{equation} \label{pont-thom}
\sN(X) \xra{\cong} [X;\G/\TOP]
\end{equation}
where $[\blank,\blank]$ denotes the homotopy classes of maps and
$\G/\TOP$ is the classifying space for topological reductions of
spherical fibrations. The $H$-space structure on $\G/\TOP$ coming
from Sullivan characteristic variety theorem \cite[chapter
4]{Madsen-Milgram(1979)} (also called `disjoint union $H$-space
structure' in \cite{Ranicki(2008)}) makes $\sN (X)$ into an abelian
group. This $H$-space structure extends to an infinite loop space
structure which expresses $\sN(X)$ via localization in terms of
familiar cohomology theories.
\begin{thm}[\cite{Madsen-Milgram(1979)}] \label{g/top}
There are compatible homotopy equivalences
\begin{align*}
\mathrm{G}/\mathrm{TOP}_{(2)} & \simeq  \Pi_{i \geq 1}
K(\mathbb{Z}_{(2)},4i) \times K(\mathbb{Z}_2,4i-2), \\
\mathrm{G}/\mathrm{TOP}_{(\mathrm{odd})} & \simeq  \mathrm{BO}_{(\mathrm{odd})},\\
\mathrm{G}/\mathrm{TOP}_{(0)} & \simeq  \mathrm{BO}_{(0)} \simeq
\Pi_{i \geq 1} K(\mathbb{Q},4i).
\end{align*}
\end{thm}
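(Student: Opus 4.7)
The plan is to compute $\pi_*(\G/\TOP)$ first and then realize each of the three target spaces as a model via a map that induces the expected isomorphism on $\pi_*$. Apply the surgery exact sequence (\ref{ses}) to $X = S^n$ with $n \geq 5$. The topological Poincar\'e conjecture gives $\sS^s(S^n) = \{*\}$, and the Pontryagin--Thom identification (\ref{pont-thom}) identifies $\sN(S^n) \cong \pi_n(\G/\TOP)$, so the sequence collapses to an isomorphism $\pi_n(\G/\TOP) \cong L^s_n(1)$. The simply connected $L$-groups are $\ZZ$, $0$, $\ZZ_2$, $0$ in dimensions $n \equiv 0,1,2,3 \pmod 4$, detected by the signature and the Arf--Kervaire invariant of a normal map into a sphere. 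These match the homotopy groups of each of the three target spaces, so it remains to exhibit in each localization a map that realizes this identification, and conclude by Whitehead's theorem after checking nilpotence.

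At odd primes, use the $KO$-orientation of topological bundles (equivalently, the fact that $BO_{(\mathrm{odd})} \ra BS\TOP_{(\mathrm{odd})}$ admits a retraction), which produces an infinite loop map $(\G/\TOP)_{(\mathrm{odd})} \ra BO_{(\mathrm{odd})}$. Both sides have homotopy groups equal to $\ZZ_{(\mathrm{odd})}$ concentrated in degrees $4i$, so this map is an isomorphism on $\pi_*$ and hence an equivalence. The rational statement then follows by rationalizing this equivalence and invoking the classical Chern--Weil splitting $BO_{(0)} \simeq \prod_{i \geq 1} K(\QQ, 4i)$ via Pontryagin classes.

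The main obstacle is the $2$-local splitting. The plan is to construct a map
\[
\ell \co (\G/\TOP)_{(2)} \lra \prod_{i \geq 1} K(\ZZ_{(2)}, 4i) \times K(\ZZ_2, 4i-2)
\]
whose components in degree $4i$ are the Morgan--Sullivan $L$-classes and whose components in degree $4i-2$ are the Kervaire classes, and then to verify it is an equivalence. The existence of these cohomology classes, together with the statement that the corresponding integral and $\ZZ_2$-valued classes genuinely represent the surgery obstructions when evaluated on transverse submanifolds of the correct dimension, is the content of Sullivan's characteristic variety theorem; this is the hardest technical input and amounts to controlling the $k$-invariants of the Postnikov tower of $(\G/\TOP)_{(2)}$. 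Once $\ell$ is in hand, one checks it is an equivalence by computing what signature and Arf invariant give on the generators of $\pi_{4i}$ and $\pi_{4i-2}$ of $(\G/\TOP)_{(2)}$ and matching this against the preliminary homotopy group computation. The passage from $(\G/PL)_{(2)}$ to $(\G/\TOP)_{(2)}$ is comparatively mild via the equivalence $\TOP/PL \simeq K(\ZZ_2, 3)$, which only alters low-degree terms.
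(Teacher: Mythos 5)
The paper does not prove Theorem~\ref{g/top}; it is quoted directly from Madsen--Milgram, so there is no paper proof to compare against. Your sketch is the standard strategy for this result (compute $\pi_*(\G/\TOP)$, build a candidate map in each localization, verify it is a $\pi_*$-isomorphism), and the main inputs you name — the $KO[1/2]$-orientation at odd primes, the Pontryagin classes rationally, and the Morgan--Sullivan/Sullivan characteristic-variety technology at $2$ — are indeed the right ones.

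Two places deserve sharpening. First, the surgery-exact-sequence computation of $\pi_n(\G/\TOP)$ only applies for $n \geq 5$; for $n \leq 4$ you need a separate argument, and the case $n = 4$ is precisely where $\G/\TOP$ and $\G/\textrm{PL}$ differ. Second, and relatedly, you describe the passage from $(\G/\textrm{PL})_{(2)}$ to $(\G/\TOP)_{(2)}$ via $\TOP/\textrm{PL} \simeq K(\ZZ_2,3)$ as ``comparatively mild,'' but it is in fact the crux of the $2$-local statement. The space $(\G/\textrm{PL})_{(2)}$ does \emph{not} split as a product of Eilenberg--MacLane spaces: its first Postnikov $k$-invariant is the nonzero class $\delta \mathrm{Sq}^2 \in H^5(K(\ZZ_2,2);\ZZ_{(2)})$ linking the $K(\ZZ_2,2)$ and $K(\ZZ_{(2)},4)$ factors, a reflection of Rokhlin's theorem. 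What the fibration $\TOP/\textrm{PL} \to \G/\textrm{PL} \to \G/\TOP$ accomplishes is precisely to kill that $k$-invariant, and this is the content of the Kirby--Siebenmann theorem, not a formal consequence of low-degree bookkeeping. Your $\ell$ must be constructed (or checked to exist) for $\G/\TOP$ itself, and showing its first Postnikov invariant vanishes is the point; if you try to transport a product decomposition from $\G/\textrm{PL}$ you will find there is none to transport. With those two points made explicit, the sketch is a faithful outline of the Madsen--Milgram argument.
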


\begin{cor} \label{ni}
If $X$ is rationally trivial we have an isomorphism of abelian
groups
\[
\sN(X) \cong \bigoplus_{i \geq 1} H^{4i} (X ; \ZZ_{(2)}) \oplus
H^{4i-2} (X;\ZZ_2) \oplus KO (X) \otimes \ZZ[\frac{1}{2}]
\]
\end{cor}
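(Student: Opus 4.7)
The plan is to combine the Pontrjagin--Thom bijection (\ref{pont-thom}), $\sN(X) \cong [X,\G/\TOP]$, with the localization splitting of Theorem \ref{g/top} via Sullivan's arithmetic fracture square
\[
\xymatrix{
\G/\TOP \ar[r] \ar[d] & \G/\TOP_{(2)} \ar[d] \\
\G/\TOP_{(\mathrm{odd})} \ar[r] & \G/\TOP_{(0)}
}
\]
which is a homotopy pullback for the simply connected finite-type space $\G/\TOP$. Since $\G/\TOP$ is an infinite loop space, applying $[X,\blank]$ yields a pullback square of abelian groups, compatible with the group structure on $\sN(X)$ coming from the characteristic variety theorem.

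Next I would read off the three localized corners from Theorem \ref{g/top}. The $2$-local term is a product of Eilenberg--MacLane spaces, giving
\[
[X,\G/\TOP_{(2)}] \cong \bigoplus_{i \geq 1} H^{4i}(X;\ZZ_{(2)}) \oplus H^{4i-2}(X;\ZZ_2),
\]
the odd-local term is
\[
[X,\G/\TOP_{(\mathrm{odd})}] \cong [X,\mathrm{BO}_{(\mathrm{odd})}] \cong KO(X) \otimes \ZZ[\tfrac{1}{2}],
\]
and the rational term is
\[
[X,\G/\TOP_{(0)}] \cong \bigoplus_{i \geq 1} H^{4i}(X;\QQ).
\]

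The hypothesis that $X$ is rationally trivial amounts to $H^{4i}(X;\QQ) = 0$ for all $i \geq 1$, so the bottom-right corner vanishes. A pullback of abelian groups over the zero group is just the direct sum of the two upper groups, and assembling the three computations yields the stated isomorphism. The only point requiring any care is the degeneration of the pullback once the rational corner is identified as zero; both this and the identification of the corners are routine, so there is no serious obstacle. Note that for fake lens spaces $L^{2d-1}(\alpha)$ the rational cohomology is concentrated in the odd degrees $0$ and $2d-1$, so the hypothesis is automatically satisfied, which is the case of interest for this paper.
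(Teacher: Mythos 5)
The paper gives no proof of Corollary \ref{ni}, so your arithmetic fracture square argument is presumably exactly what is intended, and it is the natural route given Theorem \ref{g/top}. The identification of the three corners and the use of the Pontrjagin--Thom bijection are all fine. However, the sentence you flag as the ``only point requiring any care'' and then dismiss as routine is in fact the one step that needs an argument, and the argument you give for it is not quite right. Applying $[X,\blank]$ to a homotopy pullback of infinite loop spaces does \emph{not} automatically produce a pullback of abelian groups; what you get is a Mayer--Vietoris exact sequence
\[
[X,\Omega\,\G/\TOP_{(0)}] \xra{\delta} [X,\G/\TOP] \lra [X,\G/\TOP_{(2)}]\oplus[X,\G/\TOP_{(\mathrm{odd})}] \lra [X,\G/\TOP_{(0)}],
\]
and the hypothesis that $H^{4i}(X;\QQ)=0$ for $i\geq 1$ kills the right-hand term but \emph{not} the left-hand one: $[X,\Omega\,\G/\TOP_{(0)}]\cong\bigoplus_{i\geq 1}H^{4i-1}(X;\QQ)$, and for a fake lens space $L^{4e-1}(\alpha)$ this contains $H^{4e-1}(L;\QQ)\cong\QQ$. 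So your intended application has a nonzero term sitting precisely where your ``pullback over the zero group'' reasoning needs it to vanish. What actually makes the argument work is the further observation that $\delta=0$, which follows from exactness one step to the left: for $X$ a finite complex one has $[X,\Omega\,\G/\TOP_{(S)}]\cong[X,\Omega\,\G/\TOP]\otimes\ZZ_{(S)}$, and the algebraic arithmetic square $0\to A\to A\otimes\ZZ_{(2)}\oplus A\otimes\ZZ[\tfrac12]\to A\otimes\QQ\to 0$ is exact for any abelian group $A$, so the map into $[X,\Omega\,\G/\TOP_{(0)}]$ is onto and $\delta$ has zero image. You should spell this out; as written, the proof asserts a pullback of groups without establishing it, and the hypothesis you cite does not suffice for the shortcut you take.

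One further small point: as used in the paper (see (\ref{ni-lens-spaces})), $\sN(X)\cong[X,\G/\TOP]$ is a reduced invariant since $\G/\TOP$ is connected, so the $KO(X)$ term should really be read as $\widetilde{KO}(X)$; for $X=L^{2d-1}(\alpha)$ with $\pi_1$ a $2$-group this tensored with $\ZZ[\tfrac12]$ vanishes, which is why it disappears from (\ref{ni-lens-spaces}). This is a wrinkle in the paper's statement rather than in your proof, but it is worth being aware of when you pass from the corollary to its application.
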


Given $n \in \ZZ$ and $G$ a group there is defined an abelian group
$L^s_n (G)$ \cite[chapter 5,6]{Wall(1999)}. For $n = 2k$ it is the
Witt group of based $(-1)^k$-quadratic forms over the group ring
$\ZZ G$, for $n = 2k+1$ it is a certain group of automorphisms of
based $(-1)^k$-quadratic forms over $\ZZ G$. An alternative
description of \cite{Ranicki(1992)} gives these groups uniformly for
all $n$ as cobordism groups of bounded chain complexes of based $\ZZ
G$-modules with an $n$-dimensional Poincar\'e duality. The precise
definition is not that important for us. We are mainly interested in
the invariants which detect these groups for $G \cong \ZZ_N$.
\begin{thm}  \label{L(1)}
For $G = 1$ we have
\[
L^s_n (1) \cong
\begin{cases}
8 \cdot \ZZ & n \equiv 0 \quad (\mod 4) \; (\mathrm{signature}) \\
0 & n \equiv 1 \quad (\mod 4) \\
\ZZ_2 & n \equiv 2 \quad (\mod 4) \; (\mathrm{Arf}) \\
0 & n \equiv 3 \quad (\mod 4)
\end{cases}
\]
\end{thm}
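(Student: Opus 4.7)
The plan is to identify the Witt-group description of $L^s_n(1)$ with classical results on bilinear forms over $\ZZ$, and to show that the odd-dimensional $L$-groups of the trivial group vanish. I would take as given the algebraic model from Wall [chapters 5,6]: for $n=2k$, $L^s_{2k}(1)$ is the Witt group of nonsingular $(-1)^k$-quadratic forms on finitely generated free $\ZZ$-modules modulo hyperbolic forms, while for $n=2k+1$ it is the corresponding Witt group of formations (equivalently automorphisms of hyperbolic forms modulo elementary operations). Because $K_1(\ZZ) = \{\pm 1\}$ is generated by units of the trivial group, the simple and free decorations agree, so no bookkeeping of Whitehead torsion is needed.

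For $n \equiv 0 \pmod 4$, a $(+1)$-quadratic form over $\ZZ$ is the same as an even symmetric bilinear form. The signature is additive on orthogonal sums and vanishes on hyperbolic forms, so it descends to a homomorphism $L^s_{4j}(1) \ra \ZZ$. Milnor's theorem on even unimodular lattices shows that the signature of such a form is divisible by $8$, and the form $E_8$ realizes the value $8$, giving surjectivity onto $8\ZZ$. Injectivity follows from the classification of indefinite even unimodular forms: any such form of signature zero is a sum of hyperbolic planes, hence null in the Witt group.

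For $n \equiv 2 \pmod 4$, a $(-1)$-quadratic form over $\ZZ$ is a nonsingular skew-symmetric form together with a quadratic refinement $\mu \co V \ra \ZZ_2$. The underlying skew form is automatically hyperbolic (symplectic basis theorem), so the only invariant of the quadratic refinement modulo the hyperbolic one is the Arf invariant, which lives in $\ZZ_2$ and is realized by the non-trivial refinement on the hyperbolic plane. For odd $n$, one argues that every automorphism of a stably hyperbolic form can be reduced through elementary transvections and hyperbolic stabilisations to the identity; equivalently, every $(\pm 1)$-quadratic formation over $\ZZ$ is trivial. Either route gives $L^s_{2k+1}(1) = 0$.

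I expect the odd-dimensional vanishing to be the main obstacle; the even-dimensional computations are essentially the classical theory of integral bilinear forms once the signature and Arf invariants are in place. The cleanest presentation is to cite Wall's original calculation and spend the proof identifying the invariants in the table with signature and Arf; the realization statements (existence of $E_8$ and of the Arf-one refinement on the hyperbolic plane) then show that both maps are surjective, and the structure theorems for the respective forms deliver injectivity.
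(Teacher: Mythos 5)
This theorem is stated in the paper as background and is given no proof; it is the classical computation of the $L$-groups of the trivial group (Kervaire--Milnor for the surgery obstruction groups of simply connected manifolds, and Wall \cite[chapter 13A]{Wall(1999)} in the $L$-group language the paper uses). Your sketch is the standard argument and is correct in outline: since $\mathrm{Wh}(1)=0$ the decorations collapse; in dimension $4k$ the signature together with van der Blij/Milnor divisibility by $8$ and $E_8$ as a realizing form gives $8\ZZ$, with injectivity coming from the classification of indefinite even unimodular lattices; in dimension $4k+2$ the symplectic basis theorem reduces everything to the Arf invariant of the quadratic refinement; and in odd dimensions every $(\pm 1)$-quadratic formation over $\ZZ$ is stably trivial. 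The only place where your proposal is noticeably thin is the odd-dimensional vanishing, which you yourself flag as the main obstacle; a complete argument there (e.g.\ following Wall's Chapter 6 or Ranicki's formation calculus) requires more than the one-sentence reduction to transvections, but as a proof plan it points at the right mechanism. No genuine gap beyond that level of detail, and no divergence from the standard route the paper implicitly relies on.
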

Here `signature' in the last column means that $L^s_{4k} (1) \cong 8
\cdot \ZZ$ is given by the signature of a quadratic form over $\ZZ$,
and `Arf' means that $L^s_{4k+2} (1) \cong \ZZ_2$ is given by the
Arf invariant of a quadratic form over $\ZZ_2$. For $G \neq 1$
functoriality gives maps $L^s_n (1) \ra L^s_n (G)$ and $L^s_n (G)
\ra L^s_n(1)$ yielding the splitting
\begin{equation} \label{reduced-L(G)}
L^s_n (G) \cong L^s_n (1) \oplus \wL^s_n (G).
\end{equation}

Further information about the $L$-groups of finite groups is
obtained using representation theory. For a finite group $G$ complex
conjugation induces an involution on the complex representation ring
$R_{\CC} (G)$. One can define $(\pm 1)$-eigenspaces denoted
$R_{\CC}^{(\pm 1)} (G)$. In terms of characters the
$(+1)$-eigenspace corresponds to real characters, the
$(-1)$-eigenspace corresponds to purely imaginary characters.

A non-degenerate $(-1)^k$-quadratic form over $\ZZ G$ can be
complexified. One can take its associated non-degenerate
$(-1)^k$-symmetric bilinear form and consider the positive and
negative definite $\CC$-vector subspaces. These become
$G$-representations and hence can be subtracted in $R_{\CC} (G)$.
This process  defines the $G$-signature homomorphism (see
\cite[chapter 13]{Wall(1999)} or \cite[chapter 22]{Ranicki(1992)})
\[
\Gsign \co L^s_{2k} (G) \ra R_{\CC}^{(-1)^k} (G).
\]
Its image is $4 \cdot R_{\CC}^{(-1)^k} (G)$. In case $G = \ZZ_N$ for
$N = 2^K$ the $L$-groups are completely calculated (see
\cite{Hambleton-Taylor(2000)}):\footnote{The choice of the notation
in the last line is explained later in section
\ref{sec:rho-invariant}.}

\begin{thm} \label{L(G)}
For $G = \ZZ_N$ we have that
\begin{align*}
L^s_n (G) & \cong
\begin{cases}
4 \cdot R_{\CC}^{(+1)} (G) & n \equiv 0 \; (\mod 4) \; (\Gsign, \;
\mathrm{purely} \; \mathrm{real}) \\
0 & n \equiv 1 \; (\mod 4) \\
4 \cdot R_{\CC}^{(-1)} (G) \oplus \ZZ_2 & n \equiv 2 \; (\mod 4) \;
(\Gsign, \; \mathrm{purely}
\; \mathrm{imaginary}, \mathrm{Arf}) \\
\ZZ_2 & n \equiv 3 \; (\mod 4) \; (\mathrm{codimension} \; 1 \;
\mathrm{Arf})
\end{cases} \\
\widetilde L^s_{2k} (G) & \cong 4 \cdot \RhG^{(-1)^k} \;
\textit{where} \; \RhG^{(-1)^k} \; \textit{is} \; R_{\CC}^{(-1)^k}
(G) \; \textit{modulo the regular representation.}
\end{align*}
\end{thm}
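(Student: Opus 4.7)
The plan is to exploit the splitting $L^s_n(G) \cong L^s_n(1) \oplus \widetilde L^s_n(G)$ of (\ref{reduced-L(G)}), which together with Theorem \ref{L(1)} reduces the computation to the reduced summand $\widetilde L^s_n(G)$. For even dimensions the main tool is the $G$-signature $\Gsign \co L^s_{2k}(G) \to R_{\CC}^{(-1)^k}(G)$, whose restriction to the reduced part factors through the quotient $\RhG^{(-1)^k}$; the image is automatically contained in $4 \cdot \RhG^{(-1)^k}$ by the classical $4$-divisibility theorem for multisignatures.

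The key input is the Wedderburn decomposition $\QQ G \cong \prod_{j=0}^K \QQ(\zeta_{2^j})$ of the rational group algebra, with the involution restricting to complex conjugation on each cyclotomic factor. Following the methods of Wall chapter 13A, one sets up a localization--completion arithmetic square relating $L^s_n(\ZZ G)$ to the $L$-theories of the rings of integers $\ZZ[\zeta_{2^j}]$, of their $2$-adic completions $\ZZ_2[\zeta_{2^j}]$, and of the cyclotomic fields $\QQ(\zeta_{2^j})$ themselves. The Witt groups of these cyclotomic pieces are classically known: they are detected at archimedean places by signatures (assembling into the $G$-signature), and at the unique dyadic prime by a single Arf-type invariant.

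For even $n = 2k$ the analysis of this Mayer--Vietoris square identifies $\widetilde L^s_{2k}(G)$ with $4 \cdot \RhG^{(-1)^k}$. Surjectivity is the harder direction, and is handled by explicit realization of each element of $4 \cdot \RhG^{(-1)^k}$ through equivariant plumbing constructions indexed by the nontrivial characters of $G$; injectivity modulo the Arf contribution follows from the vanishing of the obstruction groups at the dyadic completion once the trivial representation is killed. The Arf $\ZZ_2$ summand visible in degree $4k+2$ then comes entirely from the $L^s_n(1)$ factor of (\ref{reduced-L(G)}).

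For odd $n$, the $L$-theory of the Dedekind rings $\ZZ[\zeta_{2^j}]$ vanishes in odd degrees (after appropriate Rothenberg corrections), so the Mayer--Vietoris yields $\widetilde L^s_{4k+1}(G) = 0$ together with a single $\widetilde L^s_{4k+3}(G) \cong \ZZ_2$; the surviving class is detected by a codimension-$1$ Arf invariant, obtained by splitting surgery problems along a submanifold fixed by the index-$2$ subgroup of $G$. The main obstacle throughout is the $2$-primary analysis: precisely because $N = 2^K$ the group ring $\ZZ G$ is maximally non-semisimple at the prime $2$, so both the torsion contributions (the two Arf $\ZZ_2$'s and the quotient $\RhG$ of $R_{\CC}(G)$ by the regular representation) are concentrated in the $2$-adic part of the arithmetic square, and it is exactly this analysis that forms the content of the Hambleton--Taylor computation cited above.
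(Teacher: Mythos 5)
The paper itself does not prove Theorem~\ref{L(G)}: it states the computation as known, citing \cite{Hambleton-Taylor(2000)} (which rests on the framework of \cite[chapter~13A]{Wall(1999)}), so there is no in-paper argument to compare your proposal against. Your outline does correctly identify the scaffolding the cited sources use: splitting off $L^s_n(1)$ via the decomposition (\ref{reduced-L(G)}) together with Theorem~\ref{L(1)}, the $G$-signature homomorphism $\Gsign$ with image $4\cdot R^{(\pm 1)}_{\CC}(G)$, the Wedderburn factorization $\QQ G \cong \prod_{j=0}^K \QQ(\zeta_{2^j})$, and a localization--completion arithmetic square relating $L$-theory over $\ZZ G$ to the cyclotomic orders, their $2$-adic completions, and the cyclotomic fields.

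Where the proposal falls short of a proof: every genuinely hard step is named but not carried out, and those are precisely the $2$-primary steps that make $N=2^K$ the difficult case. One must show that on $\wL^s_{2k}(G)$ the multisignature is injective with image exactly $4\cdot\RhG^{(-1)^k}$; that $\wL^s_{4k+1}(G)=0$ and $\wL^s_{4k+3}(G)\cong\ZZ_2$; and all of this requires computing the $L$-theory of the $2$-adic completion of $\ZZ G$, the boundary maps in the arithmetic square, and the Rothenberg sequences needed to land in the $s$-decoration --- none of which is done here. The claim that surjectivity onto $4\cdot\RhG^{(-1)^k}$ is ``handled by equivariant plumbing'' is also not how the cited sources argue; this is an algebraic divisibility statement established inside the arithmetic square rather than a geometric realization. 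Your final sentence concedes the point by deferring the entire $2$-adic analysis to ``the content of the Hambleton--Taylor computation,'' which in effect reduces the proposal to an expanded version of the citation the paper already gives.
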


Next we describe briefly the maps in (\ref{ses}). If $n=2k$ the map
$\theta$ is given by first making the degree one normal map $(f,b)
\co M \ra X$ $k$-connected and then taking the quadratic refinement
of the $(-1)^k$-symmetric bilinear form over $\ZZ[G]$ on the kernel
of $f_\ast \co H_k (\widetilde M) \ra H_k (\widetilde X)$. The
exactness at $\sN (X)$ means that there is a degree one normal map
$(f',b') \co M' \ra X$ with $f'$ a homotopy equivalence in the
normal cobordism class of $(f,b)$ if and only if $\theta (f,b) = 0$.

The map $\eta$ is given by taking the stable normal bundle $\nu_M$
of $f \co M \xra{\simeq_s} X$ and associating to it $(f,b) \co M \ra
X$ with $b \co \nu_M \ra (f^{-1})^\ast \nu_M$ induced by $f$.

To describe $\partial$ we need the realization theorem for elements
of $L^s_n (G)$. It says that if $M^{n-1}$ is a manifold and $x \in
L^s_n (G)$ there exists a degree one normal map $(F,B) \co
(W,\partial_0 W, \partial_1 W) \ra (M \times I, M \times 0, M \times
1)$, where $I = [0,1]$, such that $\partial_0 F \co
\partial_0 W \ra M \times 0$ is a homeomorphism, $\partial_1 F
\co \partial_1 W \ra M \times 1$ is a simple homotopy equivalence
and $\theta (F,B) = x$. The `map' $\partial$ in fact means that
there is an action of $L^s_n (G)$ on $\sS^s (X)$ given as follows.
Let $f \co M \ra X \in \sS^s (X)$ and $x \in L^s_n (G)$, then
$\partial (x,f)$ is given by $\partial_1 F_1 \circ f \co \partial_1
W \ra X$ where $(F,B) \co W \ra M \times I$ realizes $x$. When the
abelian group structure of \cite[chapter 18]{Ranicki(1992)} is
imposed on $\sS^s (X)$ the action $\partial$ corresponds to the
group action of the subgroup generated by the image of $\partial$ on
$\sS^s (X)$.

Hence the problem of determining $\sS^s(X)$ in general consists of
determining firstly $\sN (X)$, which is tractable via standard
algebraic topology, secondly $L^s_n (G)$ which we know in our case,
thirdly determining the maps $\partial$, $\eta$, $\theta$ and
finally solving an extension problem which is left over.

\begin{rem} \label{s-vs-h}
One can also define the {\it structure set} $\sS^h (X)$ of an
$n$-manifold $X$. Here, in comparison with the  definition of $\sS^s
(X)$, one replaces simple homotopy equivalences by homotopy
equivalences and the homeomorphism relation by the $h$-cobordism
relation. There is a version of the sequence (\ref{ses}) in this
situation and again the theory of \cite[chapter 18]{Ranicki(1992)}
makes it into a long exact sequence of abelian groups. The obvious
map $\sS^s (X) \ra \sS^h (X)$ is a homomorphism.
\end{rem}

\subsection{Complex projective spaces} \label{subsec:cp}

We also need the discussion of the classification problem for the
complex projective spaces. This is useful also since the discussion
is simpler in this case and will give us a simple example of the
strategy we will need later.

The complex projective space $\CC P^{d-1}$ is defined as the
quotient of the diagonal $S^1$-action on $S^{2d-1} = S^1 \ast \cdots
\ast S^1$ ($d$-factors). As a real manifold it has dimension $2d-2$
and $\pi_1 (\CC P^n) = 1$. Hence from (\ref{L(1)}) we have that the
surgery exact sequence for $\CC P^{d-1}$ becomes the short exact
sequence
\begin{equation} \label{ses-cp^d-1}
0 \ra \sS^s (\CC P^{d-1}) \ra \sN (\CC P^{d-1}) \xra{\theta}
L^s_{2d-2}(1) \ra 0.
\end{equation}
For the normal invariants we have
\begin{equation} \label{ni-cp^d-1}
\sN (\CC P^{d-1}) \cong \bigoplus_{i=1}^{\lfloor(d-1)/2\rfloor}
H^{4i} (\CC P^{d-1};\ZZ) \oplus \bigoplus_{i=1}^{\lfloor d/2
\rfloor} H^{4i-2} (\CC P^{d-1};\ZZ_2).
\end{equation}
Further we can identify the factors
\begin{align}
\bs_{4i} & \co \sN(\CC P^{d-1}) \ra H^{4i} (\CC P^{d-1};\ZZ) \cong \ZZ \cong L_{4i} (1) \\
\bs_{4i-2} & \co \sN(\CC P^{d-1}) \ra H^{4i-2} (\CC P^{d-1};\ZZ_2)
\cong \ZZ_2 \cong L_{4i-2} (1)
\end{align}
as surgery obstructions of degree one normal maps obtained from
$(f,b) \co M \ra \CC P^{d-1}$ by first making $f$ transverse to $\CC
P^{k-1}$ (for $s_{2i}$ where $i = k-1$) and then taking the surgery
obstruction of the degree one map obtained by restricting to the
preimage of $\CC P^i$. The maps $\bs_{2i}$ are called the {\it
splitting invariants}. We will sometimes use (\ref{ni-cp^d-1}) to
identify the elements of $\sN(\CC P^{d-1})$ by $s = (s_{2i})_i$.

The surgery obstruction map $\theta$ takes the top summand of $\sN
(\CC P^{d-1})$ isomorphically onto $L^s_{2d-2} (1)$. Hence the short
exact sequence (\ref{ses-cp^d-1}) splits and we obtain the bijection
of $\sS^s (\CC P^{d-1})$ given by the splitting invariants
$\bs_{2i}$ for $0 < i < d-1$:
\begin{equation} \label{ss-cp^d-1}
\bigoplus _{0 < i < d-1} \bs_{2i} \co \sS^s (\CC P^{d-1})
\xra{\cong} \bigoplus_{0 < i < d-1} L^s_{2i} (1).
\end{equation}
If we think of $\sS^s (\CC P^{d-1})$ as of an abelian group via
Ranicki's identification \cite[chapter 18]{Ranicki(1992)}, then the
map (\ref{ss-cp^d-1}) is an isomorphism.

\subsection{Preliminaries for lens spaces}

When $X$ is a fake lens space $L^{2d-1}(\alpha)$ with $\pi_1
(L^{2d-1}(\alpha)) \cong G = \ZZ_N$ for $N=2^K$ we obtain some
information about the surgery exact sequence for $L^{2d-1}(\alpha)$
from Corollary \ref{ni} and Theorem \ref{L(G)}. In more detail
\begin{equation} \label{ni-lens-spaces}
\sN (L^{2d-1}(\alpha)) \cong \bigoplus_{i=1}^{\lfloor (d-1)/2
\rfloor } H^{4i} (L^{2d-1}(\alpha);\ZZ) \oplus
\bigoplus_{i=1}^{\lfloor d/2 \rfloor} H^{4i-2}
(L^{2d-1}(\alpha);\ZZ_2)
\end{equation}
We denote the factors
\begin{align}
\bt_{4i} & \co \sN(L^{2d-1}(\alpha)) \ra H^{4i} (L^{2d-1}(\alpha);\ZZ) \cong \ZZ_{2^K} \\
\bt_{4i-2} & \co \sN(L^{2d-1}(\alpha)) \ra H^{4i-2}
(L^{2d-1}(\alpha);\ZZ_2) \cong \ZZ_2
\end{align}
and similarly as above we will sometimes use (\ref{ni-lens-spaces})
to identify the elements of $\sN(L^{2d-1}(\alpha))$ by $t =
(t_{2i})_i$. More information is obtained from the following
\begin{thm}[\cite{Wall(1999)}]
\
\begin{enumerate}
\item If $d=2e$, then the map
\[
\theta \co \sN(L^{2d-1}(\alpha)) \ra L^s_{2d-1}(G) = L^s_{4e-1}(G) =
\ZZ_2
\]
is given by $\theta (x) = \bt_{4e-2} (x) \in  \ZZ_2$.
\item
The map
\[
\theta \co \sN_\partial(L^{2d-1}(\alpha) \times I) \ra L^s_{2d}(G)
\]
maps onto the summand $L^s_{2d}(1)$.
\end{enumerate}
\end{thm}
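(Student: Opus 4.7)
The plan is to handle the two parts separately. For part (1), I would identify $\theta$ with the geometrically defined codimension one Arf invariant of Theorem \ref{L(G)} and then match it with the Kervaire class $\bt_{4e-2}$. For part (2), I would realize an arbitrary element of $L^s_{2d}(1)$ by an explicit geometric construction of a normal cobordism rel boundary on $L^{2d-1}(\alpha) \times I$.

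For part (1), the first step is to make any representative $(f,b) \co M \ra L^{2d-1}(\alpha)$ transverse to a codimension one submanifold $Y \subset L^{2d-1}(\alpha)$ Poincar\'e dual to the generator of $H^1(L^{2d-1}(\alpha);\ZZ_2) \cong \ZZ_2$; one may take $Y$ to be the zero set of a generic section of the real line bundle classifying the unique index two cover. The restriction $f|\co f^{-1}(Y) \ra Y$ is a degree one normal map of $(4e-2)$-manifolds, and its Arf invariant, viewed as an element of $L^s_{4e-2}(1) = \ZZ_2$, is by construction the codim $1$ Arf invariant $\theta(f,b)$. The next step is to compare this with $\bt_{4e-2}$: the Arf invariant of $f|$ is detected by its Kervaire class in $H^{4e-2}(Y;\ZZ_2)$, which by naturality of the characteristic class decomposition of normal invariants under transverse restriction equals the pullback of $\bt_{4e-2}(f,b) \in H^{4e-2}(L^{2d-1}(\alpha);\ZZ_2)$ to $Y$. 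Using $H^\ast(L^{2d-1}(\alpha);\ZZ_2) \cong \ZZ_2[x]/(x^{2d})$ with $[Y]$ Poincar\'e dual to $x$, the composite
\[
H^{4e-2}(L^{2d-1}(\alpha);\ZZ_2) \xra{i^\ast} H^{4e-2}(Y;\ZZ_2) \xra{\langle \cdot , [Y] \rangle} \ZZ_2
\]
corresponds to cup product with $x$ followed by evaluation on the mod $2$ fundamental class of $L^{2d-1}(\alpha)$, and this is an isomorphism, giving $\theta(f,b) = \bt_{4e-2}(f,b)$.

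For part (2), the plan is direct realization. Given $y \in L^s_{2d}(1)$, first construct a degree one normal map $(g,c)\co (V, \partial V) \ra (D^{2d}, S^{2d-1})$ with $g|_{\partial V}$ a homeomorphism and surgery obstruction $y$: the Milnor $E_8$-plumbing provides the generator of $L^s_{4e}(1) = 8 \cdot \ZZ$ when $d=2e$, and a Kervaire plumbing of two $(2e+1)$-disk bundles over $S^{2e+1}$ provides the generator of $L^s_{4e+2}(1) = \ZZ_2$ when $d=2e+1$. Next, pick an embedding $D^{2d} \hookrightarrow \textup{int}(L^{2d-1}(\alpha) \times I)$, remove the interior of the embedded disk, and glue in $V$ along $\partial V \cong S^{2d-1}$, producing a manifold $W$ with $\partial W = L^{2d-1}(\alpha) \times \{0,1\}$. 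Extending $g$ by the identity outside the embedded disk gives a degree one normal map rel boundary $(F,B)\co (W, \partial W) \ra (L^{2d-1}(\alpha) \times I, L^{2d-1}(\alpha) \times \{0,1\})$ which is a homeomorphism on the boundary. By the sum formula for surgery obstructions, $\theta(F,B)$ equals the image of $y$ under the functorial map $L^s_{2d}(1) \ra L^s_{2d}(G)$, which by (\ref{reduced-L(G)}) is the split inclusion onto the $L^s_{2d}(1)$ summand; hence $\theta$ hits every element of that summand.

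The main obstacle lies in part (1). Rigorously showing that the algebraic codim $1$ Arf invariant of the $L$-group (as used in \cite{Hambleton-Taylor(2000)}) agrees with the geometric Arf invariant of the transverse restriction requires the product/composition formula for surgery obstructions, and is most naturally encoded in the $LS$-group framework of \cite[chapter 11]{Wall(1999)}. One must also argue that the other summands of $\sN(L^{2d-1}(\alpha))$ in (\ref{ni-lens-spaces}), namely the $H^{4i}(L^{2d-1}(\alpha);\ZZ)$ and the $H^{4j-2}(L^{2d-1}(\alpha);\ZZ_2)$ for $j < e$, map trivially to $\ZZ_2$; this follows from the Sullivan-Morgan characteristic class formula for $\theta$, combined with the observation that the corresponding $L$-class and lower Kervaire class contributions restrict on $Y$ to cohomology of degree different from $\dim Y = 4e-2$.
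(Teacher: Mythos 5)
The paper does not reprove this theorem but recalls it from \cite{Wall(1999)}, so your proposal is a reconstruction rather than a parallel to a proof given in the text. On its own terms: the realization construction for part (2) correctly shows that $\theta$ hits every element of $L^s_{2d}(1)$. But the statement asserts more, namely that the image of $\theta$ is \emph{contained} in that summand, i.e.\ that the $\wL^s_{2d}(G)$-component of $\theta(F,B)$ vanishes for every normal map $(F,B)$ rel boundary over $L^{2d-1}(\alpha)\times I$; this containment is what makes (\ref{ses-lens-2d-1}) exact. Your plumbing argument gives no information about it. The containment comes from the $G$-signature theorem: the $\wL^s_{2d}(G)$-component is detected by $\Gsign$, and for any closed $G$-manifold obtained by capping off such an $(F,B)$ the $G$-signature of the free universal cover is a multiple of the regular representation, hence dies in $\RhG^{(-1)^d}$.

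Part (1) has a gap at the decisive step. You claim that the classes $\bt_{4i-2}$ for $i<e$ contribute nothing to the Arf invariant of $f|_Y$ because their restrictions to $Y$ live in cohomological degree different from $\dim Y = 4e-2$. This is not a valid argument: the Morgan--Sullivan formula gives
\[
\mathrm{Arf}(f|_Y) \;=\; \bigl\langle\, V_Y^2 \cup k(f|_Y), \, [Y]_2 \,\bigr\rangle
\]
with $k$ the \emph{total} Kervaire class, so a class in degree $4i-2$ can be promoted to top degree after cupping with the degree $4(e-i)$ component of $V_Y^2$. You have not computed $V_Y$, and $Y$ is not a standard space when $K\geq 2$ (it is a one-sided codimension one submanifold of a fake lens space), so the vanishing is not automatic. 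There is also a minor factual slip: $H^\ast(L^{2d-1}(\alpha);\ZZ_2)\cong\ZZ_2[x]/(x^{2d})$ only when $N=2$; for $K\geq 2$ the ring is $\ZZ_2[x,y]/(x^2,y^d)$ with $|x|=1$, $|y|=2$ --- this does not spoil your claim that cup product with $x$ and evaluation give an isomorphism $H^{4e-2}(L;\ZZ_2)\cong\ZZ_2$, but the cohomology ring should be corrected. A route that avoids Wu-class computations and uses only structures the paper sets up: push the identity $\theta=\bs_{4e-2}$ from subsection \ref{subsec:cp} through the transfer square extending (\ref{res-diag}). Since $p^!$ is surjective on normal invariants with $p^!(s_{2i})=t_{2i}$ and the $L$-theory transfer $p^!\co L_{4e-2}(1)\ra L^s_{4e-1}(G)$ is an isomorphism of $\ZZ_2$'s, one obtains $\theta=\bt_{4e-2}$ on $\sN(L^{4e-1}(\alpha_1))$, and then for general $\alpha$ via Corollary \ref{lens-spaces-give-all-htpy-types}.
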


Hence we obtain the short exact sequence
\begin{equation} \label{ses-lens-2d-1}
0 \ra \wL^s_{2d} (G) \xra{\partial} \sS^s   (L^{2d-1}(\alpha))
\xra{\eta} \widetilde{\sN}(L^{2d-1}(\alpha)) \ra 0
\end{equation}
where
\begin{align*}
\widetilde{\sN}(L^{4e-1}(\alpha)) & = \mathrm{ker} \; \big (
\bt_{4e-2} \co
{\sN}(L^{4e-1}(\alpha)) \ra H^{4e-2} (L^{4e-1}(\alpha);\ZZ_2) \cong \ZZ_2 \big ), \\
\widetilde{\sN}(L^{4e+1}(\alpha)) & = \sN(L^{4e+1}(\alpha)).
\end{align*}
in other words
\begin{equation} \label{red-ni-lens-spaces}
\widetilde \sN (L^{2d-1}(\alpha)) \cong \bigoplus_{i=1}^{c} \ZZ_N
\oplus \bigoplus_{i=1}^{c} \ZZ_2
\end{equation}
where $c = \lfloor (d-1)/2 \rfloor$. The first term in the sequence
(\ref{ses-lens-2d-1}) is understood by Theorem \ref{L(G)}, the third
term is understood by (\ref{red-ni-lens-spaces}). Hence we are left
with an extension problem.

\subsection{The Join Construction} \label{subsec:join}

\

\medskip

We will make use of the join construction from \cite[chapter
14A]{Wall(1999)}. It can be explained as follows. Let $G$ be a group
(in our case $G \leq S^1$) acting freely on the spheres $S^m$ and
$S^n$. Then the two actions extend to the join $S^{m+n+1} \cong S^m
\ast S^n$ and the resulting action remains free. When we are given
two lens spaces (complex projective spaces) $L$ and $L'$, we can
pass to universal covers ($S^1$-bundles), form the join and then
pass to the quotient again. The resulting space is again a fake lens
space (a fake complex projective space). This operation will be
denoted $L \ast L'$ and it will be called the {\it join}. When $L' =
L^1(\alpha_1)$ we call this operation a {\it suspension}.

The join with $L^1(\alpha_k)$ defines a map $\Sigma_k \co \sS^s
(L^{2d-1}(\alpha_1)) \lra \sS^s(L^{2d+1}(\alpha_k))$. The inclusion
$L^{2d-1}(\alpha_1) \subset L^{2d+1}(\alpha_k)$ induces a
restriction map on the normal invariants $\res \co \sN
(L^{2d+1}(\alpha_k)) \lra \sN (L^{2d-1}(\alpha_1))$ and we have a
commutative diagram \cite[Lemma 14A.3]{Wall(1999)}:
\begin{equation} \label{susp-diagram}
\begin{split}
\xymatrix{
\sS^s (L^{2d-1}(\alpha_1)) \ar[r]^{\eta} \ar[d]_{\Sigma_k} & \sN (L^{2d-1}(\alpha_1)) \\
\sS^s (L^{2d+1}(\alpha_k)) \ar[r]^{\eta} & \sN (L^{2d+1}(\alpha_k))
\ar[u]_{\res} }
\end{split}
\end{equation}
Note that we have $t_{2i} = \res (t_{2i})$. Hence the map
\begin{equation}
\res \co \widetilde \sN(L^{2d+1}(\alpha_1)) \lra \widetilde
\sN(L^{2d-1} (\alpha_1))
\end{equation}
is an isomorphism when $d = 2e+1$ and it is onto when $d = 2e$ with
the kernel equal to $\ZZ_N(t_{4e})$. A similar diagram exists for
the situation $\CC P^d = \CC P^{d-1} \ast \mathrm{pt}$.

The map $\Sigma_k$ is a homomorphism when the structure sets are
equipped with the abelian groups structure from \cite[chapter
18]{Ranicki(1992)}. To see this notice that
\[
L^{2d+1}(\alpha_k) = E(\nu) \cup_{S(\nu)} C
\]
where $E(\nu)$ is the total space of the normal disk-bundle of
$L^{2d-1}(\alpha_1)$ in $L^{2d+1}(\alpha_k)$, $S(\nu)$ is the
associated sphere-bundle and $C$ is the complement (it is the total
space of a disk-bundle over $L^1(\alpha_k)$). Then there is a
commutative diagram
\[
\xymatrix{
\sS^s (L^{2d-1}(\alpha_1)) \ar[d]_{\Sigma_k} \ar[dr]^{\nu^{!}} & \\
\sS^s (L^{2d+1}(\alpha_k)) \ar[r]_{\cong} & \sS^s (E(\nu),S(\nu)) }
\]
The map in the bottom row is obtained using \cite[Theorem
12.1]{Wall(1999)}. It follows from the calculation $\sS^s_\partial
(C) = 0$ that it is an isomorphism. The map $\nu^{!}$ is the
transfer map obtained via pullback. This coincides with the
algebraic surgery transfer map from \cite[chapter
21]{Ranicki(1992)}.\footnote{We thank A. Ranicki for informing us
about the last claim.}


\section{The $\rho$-invariant} \label{sec:rho-invariant}


We review the definition of the $\rho$-invariant for odd-dimensional
manifolds and some of its properties from
\cite{Atiyah-Singer-III(1968)} and \cite{Wall(1999)}. It will
provide us with a map from the short exact sequence
(\ref{ses-lens-2d-1}) to a certain short exact sequence coming from
representation theory of $G$. Studying this map will enable us to
solve the extension problem we are left with in the next section.

\subsection{Definitions}

\

\medskip

Let $G$ be a compact Lie group acting smoothly on a smooth manifold
$Y^{2d}$. The middle intersection form becomes a non-degenerate
$(-1)^d$-symmetric bilinear form on which $G$ acts. As explained
earlier, such a form yields an element in the representation ring
$R(G)$ which we denote by $\Gsign (Y)$. The discussion in section
\ref{sec:ses} also tells us that we have $\Gsign (Y) \in R^{(-1)^d}
(G)$ which in terms of characters means that we obtain a real
(purely imaginary) character, which will be denoted as $\Gsign (-,Y)
\co g \in G \mapsto \Gsign (g,Y) \in \CC$. The (cohomological
version of the) Atiyah-Singer $G$-index theorem \cite[Theorem
(6.12)]{Atiyah-Singer-III(1968)} tells us that if $Y$ is closed then
for all $g \in G$
\begin{equation} \label{ASGIT}
\Gsign (g,Y)  = L(g,Y) \in \CC,
\end{equation}
where $L(g,Y)$ is an expression obtained by evaluating certain
cohomological classes on the fundamental classes of the $g$-fixed
point submanifolds $Y^g$ of $Y$. In particular if the action is free
then $\Gsign (g,Y) = 0$ if $g \neq 1$. This means that $\Gsign (Y)$
is a multiple of the regular representation. This theorem was
generalized by Wall to topological semifree actions on topological
manifolds, which is the case we will need in this paper
\cite[chapter 14B]{Wall(1999)}. The assumption that $Y$ is closed is
essential here, and motivates the definition of the
$\rho$-invariant. In fact, Atiyah and Singer provide two
definitions. For the first one one also needs the result of Conner
and Floyd \cite{Conner-Floyd(1964)} that for an odd-dimensional
manifold $X$ with a finite fundamental group there always exists a
$k \in \NN$ and a manifold with boundary $(Y,\partial Y)$ such that
$\pi_1 (Y) \cong \pi_1 (X)$ and $\partial Y = k \cdot X$.

\begin{defn}{\cite[Remark after Corollary 7.5]{Atiyah-Singer-III(1968)}} \label{defn-rho-1}
Let $X^{2d-1}$ be a closed manifold with $\pi_1 (X) \cong G$ a
finite group. Define
\begin{equation}
\rho (X) = \frac{1}{k} \cdot \Gsign(\widetilde Y) \in \QQ R^{(-1)^d}
(G)/ \langle \textup{reg} \rangle
\end{equation}
for some $k \in \NN$ and $(Y,\partial Y)$ such that $\pi_1 (Y) \cong
\pi_1 (X)$ and $\partial Y = k \cdot X$. The symbol $\langle
\textup{reg} \rangle$ denotes the ideal generated by the regular
representation.
\end{defn}
\noindent By the Atiyah-Singer $G$-index theorem \cite[Theorem
(6.12)]{Atiyah-Singer-III(1968)} is $\rho$ well defined.
\begin{defn} \label{defn-rho-2}
Let $G$ be a compact Lie group acting freely on a manifold
$\wX^{2d-1}$. Suppose in addition that there is a manifold with
boundary $(Y,\partial Y)$ on which $G$ acts (not necessarily freely)
and such that $\partial Y = \wX$. Define
\[
\rho_G (\wX) \co g \in G \mapsto \Gsign (g,Y) - L(g,Y) \in \CC.
\]
\end{defn}
In this definition we think about the $\rho$-invariant as about a
function $G \smin \{1\} \ra \CC$. When both definitions apply (that
means when $G$ is a finite group), then they coincide, that means
$\rho (X) = \rho_G (\wX)$.

For finite $G < S^1$ we will use special notation following
\cite[Proof of Proposition 14E.6 on page 222]{Wall(1999)}. By $\Gh$
is denoted the Pontrjagin dual of $G$, the group $\Hom_\ZZ (G,S^1)$.
Recall that for a finite cyclic $G$ the representation ring $R(G)$
can be canonically identified with the group ring $\ZZ \Gh$. Then we
also have $\QQ R(G) = \QQ \otimes R(G) = \QQ \Gh$. Dividing out the
regular representation  corresponds to dividing out the norm
element, denoted by $Z$, hence $R(G)/\langle \textup{reg} \rangle =
R_{\Gh} = \ZZ\Gh/\langle Z \rangle$ and $\QQ R(G)/\langle
\textup{reg} \rangle = \QQ R_{\Gh} = \QQ \Gh/\langle Z \rangle$.
Choosing a generator $\Gh = \langle \chi \rangle$ gives the
identifications $\QQ \RhG = \QQ [\chi]/\langle 1+\chi +\cdots +
\chi^{N-1} \rangle$ where $N$ is the order of $G$. In order to save
space we also use the following notation $I \langle K \rangle =
\langle 1 + \chi + \cdots + \chi^{N-1} \rangle$.

Recall that $R(G)$ contains two eigenspaces $R(G)^{\pm}$ with respect to the conjugation action. In terms of the above identification of $R(G)$ and $\RhG$ with the polynomial rings we have identifications:
\begin{align*}
 \RhG^- & = \langle \chi^k - \chi^{N-k} \; | \; k = 1, \ldots, (N/2)-1 \rangle \\
 &= \{ \; p \in \ZZ[\chi]/ I \langle K \rangle \; | \; p (\chi^{N-1}) = - p (\chi) \;  \},  \\
 \RhG^+ & = \langle \chi^k + \chi^{N-k} \; | \; k = 0, \ldots, (N/2)-1 \rangle \\
 &= \{ \; p \in \ZZ[\chi]/ I \langle K \rangle \; | \; p (\chi^{N-1}) = p (\chi) \; \textup{and} \;  p(-1) \equiv 0 \; \mod 2 \; \}.  
\end{align*}

\subsection{Properties}

\

\medskip

The $\rho$-invariant is an $h$-cobordism invariant \cite[Corollary
7.5]{Atiyah-Singer-III(1968)}. For $X^{2d-1}$ with $\pi_1 (X) \cong
G$ it defines a function of $\sS^s (X)$ by sending $a = [h \co M
\lra X]$ to $\wrho (a) = \rho(M) - \rho (X)$. If we put on $\sS^s
(X)$ the abelian group structure from \cite[chapter
18]{Ranicki(1992)} it is not clear whether $\wrho$ is a homomorphism
in general.\footnote{We will see below that it is a homomorphism when $X = L^{2d-1}(\alpha)$.} Still the following property holds always.

\begin{prop}
For $X^{2d-1}$ with $\pi_1 (X) \cong G$ there is a commutative
diagram
\[
\xymatrix{
L^s_{2d} (G) \ar[r]^{\partial} \ar[d]^{\Gsign} & \sS^s(X) \ar[d]^{\wrho} \\
4 \cdot R_{\CC}^{(-1)^d} (G) \ar[r] & \QQ R_{\CC}^{(-1)^d}
(G)/\langle \textup{reg} \rangle . }
\]
Moreover, for $z \in L^s_{2d} (G)$ and $x \in \sS^s(X)$ we have
\[
\wrho (x+\partial z) = \wrho (x) + \wrho (\partial z).
\]
\end{prop}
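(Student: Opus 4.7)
The plan is to reduce both assertions to a single computation: if a cobordism realizes a class $z \in L^s_{2d}(G)$, then the difference of $\rho$-invariants of its two boundary components equals the $G$-signature of the cobordism, and this in turn equals $\Gsign(z)$ modulo the ideal $\langle \textup{reg} \rangle$. The two ingredients are the realization theorem for $L^s_{2d}(G)$ recalled in section \ref{sec:ses} and Novikov additivity for the $G$-signature applied to free $G$-manifolds with boundary.

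For the commutativity of the diagram, realize $z$ by a degree one normal map $(F,B) \co (W; \partial_0 W, \partial_1 W) \ra (X \times I; X \times 0, X \times 1)$ with $\partial_0 F$ a homeomorphism, $\partial_1 F$ a simple homotopy equivalence representing $\partial z$, and surgery obstruction $\theta(F,B)=z$. The universal cover $\widetilde W$ carries a free $G$-action restricting to $\widetilde X$ and $\widetilde{\partial_1 W}$ on its two boundary components. Choose, via \cite{Conner-Floyd(1964)}, a manifold $Y$ with $\pi_1(Y) \cong G$ and $\partial Y = k \cdot X$, and form $Y' = Y \cup_{k \cdot X}(k \cdot W)$, which has $\partial Y' = k \cdot \partial_1 W$. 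Novikov additivity yields
\[
\Gsign(\widetilde{Y'}) = \Gsign(\widetilde Y) + k \cdot \Gsign(\widetilde W),
\]
so that by Definition \ref{defn-rho-1}
\[
\wrho(\partial z) = \rho(\partial_1 W) - \rho(X) = \Gsign(\widetilde W) \in \QQ R_\CC^{(-1)^d}(G)/\langle \textup{reg}\rangle.
\]
It remains to identify $\Gsign(\widetilde W)$ with $\Gsign(z)$: surgery below the middle dimension does not change the $G$-signature of $\widetilde W$ relative to its boundary, so we may assume $(F,B)$ is $d$-connected, in which case $\Gsign(z)$ is by definition the $G$-signature of the $(-1)^d$-symmetric form on the surgery kernel $K_d(\widetilde W)$; meanwhile $\Gsign(\widetilde W)$ differs from this by the $G$-signature of the trivial cobordism $\widetilde X \times I$, which vanishes. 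This gives the first claim, and in particular shows that $\Gsign(\widetilde{W})$ depends only on $z$ and not on the ambient manifold used for the realization.

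For the additive formula, represent $x \in \sS^s(X)$ by $h \co M \ra X$ and apply the realization theorem now to $M$ in place of $X$, obtaining a cobordism $(F', B') \co (W'; M, M') \ra (M \times I; M \times 0, M \times 1)$ with $\theta(F',B') = z$. By the definition of the action $\partial$ we have $x + \partial z = [\,h \circ \partial_1 F' \co M' \ra X\,]$, and telescoping gives
\[
\wrho(x + \partial z) = \rho(M') - \rho(X) = \bigl(\rho(M') - \rho(M)\bigr) + \bigl(\rho(M) - \rho(X)\bigr).
\]
The second summand is $\wrho(x)$ by definition. The first summand equals $\Gsign(\widetilde{W'})$ by the argument of the previous paragraph applied to the cobordism $W'$ over $M$, and since this quantity depends only on $z$, it equals $\Gsign(\widetilde W) = \wrho(\partial z)$, completing the proof. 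The main obstacle is verifying the identification $\Gsign(\widetilde W) = \Gsign(z)$ in the topological category, which relies on Wall's topological extension of the $G$-signature formalism in \cite[chapter 14B]{Wall(1999)}; once this is in place, Novikov additivity handles everything else uniformly.
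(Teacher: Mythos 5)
Your proof is correct, and it supplies in detail the argument that the paper leaves to a citation: the paper's own proof is just ``See \cite[Theorem 2.3]{Petrie(1970)}. It essentially follows from definitions. We also use the identification of the geometrically given action of $L^s_{2d}(G)$ on $\sS^s(X)$ with the action coming from the abelian group structure\ldots.'' Your reduction---realize $z$ by a cobordism $(W;\partial_0 W,\partial_1 W)$, bound $k\cdot X$ by a manifold $Y$ via Conner--Floyd, glue, and apply Novikov additivity to the $G$-signature of universal covers---is precisely the kind of ``from the definitions'' computation the paper is alluding to, and the telescoping trick $\rho(M')-\rho(X)=(\rho(M')-\rho(M))+(\rho(M)-\rho(X))$ cleanly disposes of the additive formula once you know $\Gsign(\widetilde{W'})$ depends only on $z$. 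One point you treat a little too lightly: when you write ``By the definition of the action $\partial$ we have $x+\partial z=[h\circ\partial_1 F']$,'' the left-hand side is Ranicki's abelian group addition while the right-hand side is the output of Wall's geometric $\partial$-action, and equating them requires the identification of the two that the paper explicitly invokes. Your argument actually proves the formula for the geometric action; the identification then transfers it to the algebraic one. Similarly, the step $\Gsign(\widetilde W)=\Gsign(z)$ (orthogonality of the splitting $H_d(\widetilde W)\cong K_d(\widetilde W)\oplus H_d(\widetilde X\times I)$, vanishing of the contribution from the trivial part) is correct but compressed; it is worth at least a pointer to \cite[chapter 14B]{Wall(1999)} since everything here must be done in the topological category. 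Neither is a gap, but both should be named rather than folded silently into the narrative.
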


\begin{proof}
See \cite[Theorem 2.3]{Petrie(1970)}. It essentially follows from
definitions. We also use the identification of the geometrically
given action of $L^s_{2d} (G)$ on $\sS^s(X)$ with the action coming
from the abelian group structure on $\sS^s(X)$ of \cite[chapter
18]{Ranicki(1992)}.
\end{proof}

\begin{rem} \label{rho-factors}
The map $\wrho$ also obviously factors through the map $\sS^s (X)
\ra \sS^h (X)$ of Remark \ref{s-vs-h}.
\end{rem}

When $X = L^{2d-1}(\alpha_k)$, it follows from the above diagram,
the exactness of the surgery exact sequence, the Atiyah-Singer
$G$-index theorem and the calculation of the groups $L^s_{2d} (G)$
that the action of $\wL^s_{2k} (G)$ on $\sS^s(L^{2k-1})$ is free. In
fact we have

\begin{prop}\label{ses-vs-rep-thy}
There is the following commutative diagram of abelian groups and
homomorphisms with exact rows
\[
\xymatrix{ 0 \ar[r] & \wL^s_{2d} (G) \ar[r]^(0.4){\partial}
\ar[d]_{\cong}^{\Gsign} & \sS^s (L^{2d-1}(\alpha)) \ar[r]^{\eta}
\ar[d]^{\widetilde \rho}&
\widetilde \sN (L^{2d-1}(\alpha)) \ar[r] \ar[d]^{[\widetilde \rho]}& 0 \\
0 \ar[r] & 4 \cdot R^{(-1)^d}_{\widehat G} \ar[r] & \QQ
R^{(-1)^d}_{\widehat G} \ar[r] & \QQ R^{(-1)^d}_{\widehat G}/ 4
\cdot R^{(-1)^d}_{\widehat G} \ar[r] & 0 }
\]
where $[\wrho]$ is the homomorphism induced by $\wrho$.
\end{prop}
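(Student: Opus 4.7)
The plan is to verify the diagram in four steps: exactness of both rows, the fact that $\Gsign$ is an isomorphism, commutativity of the left square, and the homomorphism property of $\wrho$ (which then induces $[\wrho]$ and forces commutativity of the right square). The top row is the reduced surgery sequence (\ref{ses-lens-2d-1}) promoted to an exact sequence of abelian groups via Ranicki's algebraic surgery sequence \cite[chapter 18]{Ranicki(1992)}, and the bottom row is the obvious short exact sequence attached to the inclusion $4 \cdot \RhG^{(-1)^d} \hookrightarrow \QQ \RhG^{(-1)^d}$. The left vertical $\Gsign$ is the isomorphism of Theorem \ref{L(G)}. Commutativity of the left square is inherited from the preceding proposition applied to $X = L^{2d-1}(\alpha)$: since $\Gsign(z)$ for $z \in \wL^s_{2d}(G)$ already lives in $4 \cdot \RhG^{(-1)^d}$ (i.e.\ modulo the regular representation), the identity $\wrho \circ \partial = \iota \circ \Gsign$ of the preceding proposition restricts to the desired statement, where $\iota$ denotes the inclusion into $\QQ \RhG^{(-1)^d}$.

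The substantive task is promoting $\wrho$ from a function to a genuine homomorphism; the preceding proposition yields only the strictly weaker $\partial$-equivariance $\wrho(x + \partial z) = \wrho(x) + \wrho(\partial z)$. My approach would be to reinterpret $\wrho$ algebraically in Ranicki's language: an element $a \in \sS^s(L^{2d-1}(\alpha))$ is represented by a $2d$-dimensional quadratic Poincar\'e pair over $\ZZ G$ whose boundary encodes the given simple homotopy equivalence, and the reduced $\rho$-invariant is computable as a difference of eigenspace characters of the $(-1)^d$-Hermitian form obtained by tensoring over $\ZZ G$ with $\CC$ along the irreducible characters of $G$. Since the Ranicki sum in $\sS^s(L^{2d-1}(\alpha))$ corresponds to direct sum of such pairs and the eigenspace decomposition is additive under direct sum modulo the regular representation, $\wrho$ is a homomorphism.

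Once $\wrho$ is known to be a homomorphism, the composite $\sS^s(L^{2d-1}(\alpha)) \xra{\wrho} \QQ \RhG^{(-1)^d} \twoheadrightarrow \QQ \RhG^{(-1)^d}/4 \cdot \RhG^{(-1)^d}$ kills the image of $\partial$ by the commutativity of the left square, so it descends along $\eta$ to the asserted homomorphism $[\wrho]$ on $\widetilde \sN(L^{2d-1}(\alpha))$, and the right square then commutes by construction. The main obstacle is thus the homomorphism property of $\wrho$ itself: the original Atiyah--Singer definition of $\rho$ is not manifestly additive with respect to the Ranicki sum, and some genuine input beyond the $\partial$-equivariance of the preceding proposition---either an algebraic reformulation in $L$-theory as sketched above, or a direct geometric bordism argument exploiting freeness of the $G$-action on $S^{2d-1}$---is required to bridge the gap. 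All other pieces of the diagram follow by direct appeal to results already assembled in the preceding sections.
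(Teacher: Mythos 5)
There is a genuine gap, and you yourself flag where it is: the homomorphism property of $\wrho$. Your sketch of an ``algebraic reformulation'' --- viewing an element of $\sS^s(L^{2d-1}(\alpha))$ as a quadratic Poincar\'e pair over $\ZZ G$ and reading off $\wrho$ as a signature-like invariant that is additive under direct sum --- does not go through as stated. The $\rho$-invariant is a \emph{secondary} invariant, defined (Definition~\ref{defn-rho-1}) by bounding a multiple $k \cdot X$ and dividing the resulting $G$-signature by $k$; it is not a primary signature of the quadratic kernel of $h$ (that quantity would land in an integral $L$-group, not in $\QQ\RhG^{(-1)^d}$). The paper even cautions in the footnote before Remark~\ref{rho-factors} that it is \emph{not} clear whether $\wrho$ is a homomorphism for a general manifold $X$ with finite $\pi_1$; if the clean algebraic argument you sketch were available, the homomorphism property would hold in that generality and the footnote would be unnecessary. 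So you have correctly located the hard step but have not supplied a proof of it.

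The paper's own route is quite different and uses the specific geometry of fake lens spaces. It first establishes the explicit formula for $[\wrho]$ on normal invariants (Proposition~\ref{rho-formula-lens-sp}): for $d=2e$ every normal cobordism class of $L^{2d-1}(\alpha_1)$ is hit from $\sS^s(\CC P^{d-1})$ via the $S^1$-bundle transfer $p^{!}$, and Wall's formula (Theorem~\ref{rho-formula-cp}) applies; for $d=2e+1$ one summand of $\widetilde\sN$ is not hit and is handled by the suspension map $\Sigma$ together with a careful eigenspace argument (Lemma~\ref{last-summand}). With these formulas in hand, the homomorphism property of $\wrho$ for $\alpha_1$ (Corollary~\ref{rho-homomorphism}) follows by choosing canonical representatives fibering over $\CC P^{d-1}$, combined with the $\partial$-equivariance, the fact that $\Sigma$ is a homomorphism (which the paper proves via $\nu^{!}$ and $\sS^s_\partial(C)=0$), and the injectivity of multiplication by $f$ on $\QQ\RhG^-$ (Lemma~\ref{injectivity-mult-f}). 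The case $\alpha_k$ then uses the join construction, and general $\alpha$ follows from Corollary~\ref{lens-spaces-give-all-htpy-types} together with Ranicki's composition formula, using that $\wrho$ factors through $\sS^h$. None of this machinery is a corollary of $\partial$-equivariance plus generalities about Poincar\'e pairs, which is what your sketch implicitly relies on.

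The peripheral parts of your proposal (exactness of both rows, the $\Gsign$ isomorphism from Theorem~\ref{L(G)}, commutativity of the left square via the preceding Proposition, and the descent of $\wrho$ modulo $4\cdot\RhG^{(-1)^d}$ to $[\wrho]$) are correct and match the paper, but they are the parts the paper itself dismisses as ``following from the previous discussion.''
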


All the statements follow from the previous discussion except the
claim that $\wrho$ and  $[\wrho]$ are homomorphisms. This will be
proved in this section, first for $\alpha_1$, then for $\alpha_k$,
and finally for general $\alpha$. To this end we need some way to
calculate the $\rho$-invariant for fake lens spaces. The formulas we
obtain will give us first a good understanding of the map $[\wrho]$.
Using certain naturality properties we will obtain also the claim
about $\wrho$.

Recall the join $L \ast L'$ of the lens spaces $L$ and $L'$ from
section \ref{subsec:join}. We have \cite[chapter 14A]{Wall(1999)}
\begin{equation}
\rho (L \ast L') = \rho (L) \cdot \rho (L').
\end{equation}
For $L^1(\alpha_k)$ we have \cite[Proof of Theorem
14C.4]{Wall(1999)}
\begin{equation}
\rho (L^1(\alpha_k)) = f_k \in \QQ \RhGm
\end{equation}
where $f_k$ is defined as follows.

\begin{defn}
For odd $k \in \NN$ we set
\[
f_k := \frac{1+\chi^k}{1-\chi^k} \mathrm{\qquad and \qquad} f'_k :=
\frac{1-\chi+\chi^2-\cdots-\chi^{k-2}+\chi^{k-1}}{1+\chi+\chi^2+\cdots+\chi^{k-2}+\chi^{k-1}}.
\]
We abbreviate $f := f_1$.
\end{defn}

\begin{lem}\label{f_k-lem}
Let $G = \ZZ_N$ with $N=2^K$. For odd $k \in \NN$ we have
\[
f_k \in \QQ \RhGm, \qquad f_k = f \cdot f'_k, \qquad f'_k \in \RhG.
\]
\end{lem}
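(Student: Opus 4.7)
The plan is to verify the three conclusions in order, working in $\ZZ\Gh$ and its quotient $\RhG$, and using throughout that $k$ odd together with $N = 2^K$ forces $(k,N) = 1$.

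For the first assertion, I would appeal to the decomposition $\QQ\RhG \cong \prod_{d \mid N,\, d > 1} \QQ(\zeta_d)$. Since $(k,d) = 1$ for every $d \mid N$ with $d > 1$, the element $\zeta_d^k$ is a primitive $d$-th root of unity different from $1$, so $1 - \zeta_d^k \neq 0$ in each field factor. Hence $1 - \chi^k$ is a unit in $\QQ\RhG$ and $f_k = (1+\chi^k)(1-\chi^k)^{-1}$ is well-defined there. Complex conjugation $\chi \mapsto \chi^{-1}$ sends $f_k$ to $(1+\chi^{-k})(1-\chi^{-k})^{-1} = -f_k$, placing $f_k$ in $\QQ\RhGm$.

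For the identity $f_k = f \cdot f'_k$, the elementary polynomial identities
\[
(1-\chi)(1 + \chi + \chi^2 + \cdots + \chi^{k-1}) = 1 - \chi^k, \qquad (1+\chi)(1 - \chi + \chi^2 - \cdots + \chi^{k-1}) = 1 + \chi^k
\]
(the second using that $k$ is odd) rewrite the numerator and denominator of $f'_k$; the factors $1 \pm \chi$ then cancel against those of $f$, leaving $f_k$.

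The only step with any real content, and the expected obstacle, is showing that $f'_k$ lies in $\RhG$ rather than only in $\QQ\RhG$. My plan is to exhibit an explicit inverse of the denominator $A := 1 + \chi + \chi^2 + \cdots + \chi^{k-1}$ as an integer polynomial in $\chi$ modulo the norm $Z$. Choose a positive integer $m$ with $km \equiv 1 \pmod N$ and write $km = 1 + \ell N$. A direct computation in $\ZZ\Gh$ gives
\[
A \cdot \bigl(1 + \chi^k + \chi^{2k} + \cdots + \chi^{(m-1)k}\bigr) \;=\; \sum_{j=0}^{km-1} \chi^j \;=\; \sum_{j=0}^{\ell N} \chi^j \;=\; \ell Z + 1,
\]
which reduces to $1$ in $\RhG$. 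Thus $A$ is a unit in $\RhG$ with an explicit integer-polynomial inverse, and multiplying this inverse by the integer polynomial $1 - \chi + \chi^2 - \cdots + \chi^{k-1}$ exhibits $f'_k$ as an element of $\RhG$, completing the proof.
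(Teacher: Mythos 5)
Your proof is correct, and for the substantive step (showing $f'_k \in \RhG$) it is essentially the paper's proof: both exhibit $1 + \chi^k + \chi^{2k} + \cdots + \chi^{(m-1)k}$ as an explicit integer-polynomial inverse of $1 + \chi + \cdots + \chi^{k-1}$ in $\RhG$, using $km \equiv 1 \pmod N$. The only difference is in establishing that $1 - \chi^k$ is a unit in $\QQ\RhG$: the paper writes the inverse out explicitly as $-\frac{1}{N}\sum_{j=1}^{N} j\,\chi^{(j-1)k}$, whereas you invoke the cyclotomic/CRT decomposition $\QQ\RhG \cong \prod_{d \mid N,\, d>1}\QQ(\zeta_d)$ and observe that $1-\zeta_d^k \neq 0$ in each factor. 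Your route is a bit more conceptual and avoids a small computation, but both are standard and the difference is cosmetic; either works.
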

\begin{proof}
Notice that $1-\chi^k$ is invertible in $\QQ \RhG$ because
\[
(1-\chi^k)^{-1} = -\frac{1}{N}(1 + 2 \cdot \chi^k + 3 \cdot
\chi^{2k} + \cdots + N \cdot \chi^{(N-1)k}) \in \QQ \RhG.
\]
Therefore $f_k \in \QQ \RhG$ and the identity
\[
\frac{1+\chi^{-k}}{1-\chi^{-k}} = - \frac{1+\chi^k}{1-\chi^k} = -
f_k
\]
implies $f_k \in \QQ \RhGm$. An easy calculation shows $f_k = f
\cdot f'_k$. That $f'_k \in \RhG$ follows from the fact that
$1+\chi+\chi^2+\cdots+\chi^{k-1}$ is invertible in $\RhG$. The
inverse is given by $1+\chi^k+\chi^{2k}+\cdots+\chi^{(r-1)k}$ where
$r$ denotes a natural number such that $r \cdot k-1$ is a multiple of
$N=2^K$.
\end{proof}
Also a formula of Wall which calculates the $\rho$-invariant for
fake complex projective spaces will be useful. Let $a = [h \co Q
\lra \CC P^{d-1}]$ be an element of $\sS^s (\CC P^{d-1})$ and let
$\widetilde h \co \widetilde Q \lra S^{2d-1}$ be the associated map
of $S^1$-manifolds. Denote $\wrho_{S^1} (a) := \wrho_{S^1}
(\widetilde Q) - \wrho_{S^1} (S^{2d-1})$ defining a function of
$\sS^s (\CC P^{d-1})$.

\begin{thm}{\cite[Theorem 14C.4]{Wall(1999)}} \label{rho-formula-cp}
Let $a = [h \co Q \ra \CC P^{d-1}]$ be an element in $\sS^s (\CC
P^{d-1})$. Then for $t \in S^1$
\[
\wrho_{S^1} (t,a) = \sum_{1 \leq i \leq \lfloor d/2 \rfloor -1} 8
\cdot \bs_{4i} (\eta (a)) \cdot (f^{d-2i} - f^{d-2i-2}) \in \CC,
\]
where $f = (1+t)/(1-t)$.
\end{thm}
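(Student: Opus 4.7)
The plan is to establish additivity of both sides of the formula as functions on $\sS^s(\CC P^{d-1})$ and then verify the equality on generators detected by the splitting invariants. I would first check that both sides are additive with respect to the Ranicki abelian group structure on $\sS^s(\CC P^{d-1})$: the right-hand side is manifestly additive because each $\bs_{4i}\circ\eta$ is a group homomorphism to $L^s_{4i}(1)\cong 8\ZZ$ by the infinite loop space structure underlying the Sullivan splitting (Theorem \ref{g/top}); for the left-hand side, I would adapt the argument of \cite[Theorem 2.3]{Petrie(1970)} to the $S^1$-equivariant setting, showing that under the $L^s_{2d}$-action on $\sS^s(\CC P^{d-1})$ the change in $\wrho_{S^1}$ equals the $S^1$-signature of the realizing cobordism, which is itself additive.

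Next I would reduce the problem to generators via the splitting (\ref{ss-cp^d-1}), $\sS^s(\CC P^{d-1})\cong\bigoplus_{0<i<d-1}L^s_{2i}(1)$. The Arf-type summands (odd $i$, giving $L^s_{4j+2}(1)\cong\ZZ_2$) are automatically killed by the rational invariant $\wrho_{S^1}$ and by the right-hand side, so only the signature-type summands $L^s_{4j}(1)\cong 8\ZZ$ for $1\leq j\leq\lfloor d/2\rfloor-1$ can contribute. It suffices then, for each such $j$, to exhibit $a_j\in\sS^s(\CC P^{d-1})$ with $\bs_{4j}(\eta(a_j))=1$ and all other splitting invariants zero, and to verify $\wrho_{S^1}(t,a_j)=8(f^{d-2j}-f^{d-2j-2})$.

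I would realize such an $a_j$ explicitly using plumbing: grafting the $E_8$-plumbing $P_0$ (of signature $8$) onto the identity structure via a tubular neighbourhood of $\CC P^j\subset\CC P^{d-1}$ produces an $S^1$-equivariant cobordism $\widetilde W$ from $S^{2d-1}$ to the universal $S^1$-cover $\widetilde Q_j$. By Definition \ref{defn-rho-2}, $\wrho_{S^1}(t,a_j)=\Gsign(t,\widetilde W)-L(t,\widetilde W)$. The Atiyah--Singer $G$-index theorem in the topological form of \cite[chapter 14B]{Wall(1999)} then localizes this expression to the $S^1$-fixed-point set of $\widetilde W$, which is supported over $\CC P^j$; the contribution factors as the signature $8$ of $P_0$ times an equivariant class depending only on the normal bundle of $\CC P^j\hookrightarrow\CC P^{d-1}$.

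The main obstacle is the final step: identifying that equivariant normal-bundle contribution with $f^{d-2j}-f^{d-2j-2}$. This amounts to an explicit character calculation in the representation ring of $S^1$ using the standard weights on $\CC^d$ and the diagonal $S^1$-action, and it is where the exact polynomial on the right-hand side is pinned down. Once this is verified on all $a_j$, the result follows by additivity.
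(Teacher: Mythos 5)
The theorem is quoted from Wall with a citation, so the paper itself gives no proof; the natural comparison is with Wall's argument in the cited Theorem 14C.4, which derives the formula by applying the topological $G$-signature theorem directly to the $S^1$-disk bundle associated to an \emph{arbitrary} $a = [h\co Q \to \CC P^{d-1}]$, and then reads off the splitting invariants from the resulting $L$-class expression via the Morgan--Sullivan characteristic-class description of $\G/\TOP$. In that treatment additivity of $\wrho_{S^1}$ is a \emph{consequence} of the formula, and the paper says exactly this right after the statement (``Among other things this also shows that $\wrho_{S^1}$ is a homomorphism...'').

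Your plan reverses that order, and the reversal opens a real gap. You propose to establish additivity of $\wrho_{S^1}$ by adapting Petrie's Theorem 2.3, but that result only controls the behaviour of $\wrho$ under the $\partial$-action of $L^s_{n+1}(\pi_1 X)$ on $\sS^s(X)$. Here $X = \CC P^{d-1}$ has real dimension $n=2d-2$ and is simply connected, so the acting group is $L^s_{2d-1}(1)=0$ by Theorem~\ref{L(1)}: the $\partial$-action is trivial and Petrie's argument tells you nothing about the Ranicki group structure on $\sS^s(\CC P^{d-1})$. (Your reference to an ``$L^s_{2d}$-action on $\sS^s(\CC P^{d-1})$'' seems to confuse the $\CC P^{d-1}$ case with that of the lens spaces $L^{2d-1}(\alpha)$.) Since that group structure is, via the injection $\eta$ of~(\ref{ses-cp^d-1}), the Sullivan characteristic-variety addition on $\sN(\CC P^{d-1})$ rather than any geometric cobordism action one can localize, you need a genuinely new mechanism to see that $\wrho_{S^1}$ respects it before reducing to generators; no such mechanism is supplied, and without it checking the formula on the plumbing generators $a_j$ does not establish it for general $a$. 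The remaining ingredients---realizing the signature generators by $E_8$-plumbing over $\CC P^j$ and the Atiyah--Singer localization producing the factor $f^{d-2j}-f^{d-2j-2}$---are plausible and in the right spirit, but as you acknowledge, the final equivariant character computation (which is precisely where the polynomial in $f$ is pinned down) is left unverified.
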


Among other things this also shows that $\wrho_{S^1}$ is a
homomorphism of $\sS^s (\CC P^{d-1})$. Coming back to lens spaces
recall that there is an $S^1$-bundle (better $L^1(\alpha_1)$-bundle)
$p \co L^{2d-1}(\alpha_1) \lra \CC P^{d-1}$. Via pullback it induces
a commutative diagram
\begin{equation} \label{res-diag}
\begin{split}
\xymatrix{
0 \ar[r] & \sS^s (\CC P^{d-1} ) \ar[d]^{p^{!}} \ar[r]^{\eta} & \sN (\CC P^{d-1}) \ar[d]^{p^{!}} \ar[r] & L_{2(d-1)} (1) \\
\wL^s_{2d} (G) \ar[r] & \sS^s (L^{2d-1}(\alpha_1)) \ar[r]^{\eta} &
\widetilde \sN (L^{2d-1}(\alpha_1)) \ar[r] & 0 }
\end{split}
\end{equation}
With the abelian group structure of \cite[chapter 18]{Ranicki(1992)}
the maps $p^{!}$ are homomorphisms by the identification of
geometric and algebraic transfers. Another way of thinking about
$p^{!}$ is that it is given by passing to the subgroup $G < S^1$.
Since the $\rho$-invariant is natural for passing to subgroups we
obtain
\begin{cor}{\cite[Theorem 14E.8]{Wall(1999)}} \label{rho-formula-1-lens-sp}
Let $a \in \sS^s (L^{2d-1}(\alpha_1))$ such that $a = p^{!} (b)$ for
some $b \in \sS^s (\CC P^{d-1})$. Then
\[
\wrho (a) = \sum_{1 \leq i \leq \lfloor d/2 \rfloor -1} 8 \cdot
\bs_{4i} (\eta (b)) \cdot (f^{d-2i} - f^{d-2i-2}) \in \QQ
\RhG^{(-1)^d},
\]
where $f = (1+\chi)/(1-\chi)$.
\end{cor}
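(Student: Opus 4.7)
The plan is to deduce the corollary from Theorem \ref{rho-formula-cp} via the naturality of the $\rho$-invariant under restriction to a closed subgroup. Concretely, I would show that for $a = p^{!}(b)$ the equivariant invariants $\wrho_{S^1}(b)$ and $\wrho(a)$ agree once we substitute a generator of $G < S^1$ for the variable $t$, and then invoke the formula of Theorem \ref{rho-formula-cp} directly.

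First, I would unpack $a = p^{!}(b)$ geometrically. If $b = [h \co Q \ra \CC P^{d-1}]$, pulling back the principal $S^1$-bundle $S^{2d-1} \ra \CC P^{d-1}$ along $h$ yields a free $S^1$-manifold $\wX$ together with an $S^1$-equivariant map $\wX \ra S^{2d-1}$ covering $h$. Dividing by $G < S^1$ produces a fake lens space $M := \wX/G$ with a simple homotopy equivalence $h' \co M \ra L^{2d-1}(\alpha_1)$ representing $a$, and the universal cover of $M$ is $\wX$ with the induced free $G$-action. Next, using Wall's extension \cite[chapter 14B]{Wall(1999)} of the bordism arguments of Conner--Floyd, I would choose a compact $S^1$-manifold $(Y, \partial Y)$ with $\partial Y = \wX$. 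Viewing $Y$ as a $G$-manifold via $G < S^1$ gives a bounding manifold for $\wX$ regarded as the universal cover of $M$, and Definition \ref{defn-rho-2} yields that the character function of $\rho_G(\wX)$ coincides with the restriction of the character function of $\rho_{S^1}(\wX)$ to $G \smin \{1\}$: both $\Gsign(g, Y)$ and $L(g, Y)$ are determined by the $g$-fixed point data of $Y$, which is identical for the $S^1$-action and its restriction. Subtracting the analogous identity for $(L^{2d-1}(\alpha_1), S^{2d-1})$ produces $\wrho(a) = \wrho_{S^1}(b)|_G$ as functions on $G \smin \{1\}$.

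Finally I would plug in Theorem \ref{rho-formula-cp}. Under the identifications $R(G) = \ZZ \Gh$ and $\QQ \RhG = \QQ[\chi]/I\langle K \rangle$ recalled in Section \ref{sec:rho-invariant}, a character on $G$ corresponds to its generating polynomial, and the restriction $t \leadsto \chi$ takes $f = (1+t)/(1-t)$ to $(1+\chi)/(1-\chi)$. By Lemma \ref{f_k-lem} this element lies in $\QQ \RhGm$, hence each summand $f^{d-2i} - f^{d-2i-2}$ lies in $\QQ \RhG^{(-1)^d}$, and the stated identity follows verbatim from Theorem \ref{rho-formula-cp}. The main obstacle is the naturality statement itself: one has to verify that a cobounding $S^1$-manifold $Y$ exists and that, under restriction of the action, the Atiyah--Singer--Wall contributions $\Gsign(g, Y)$ and $L(g, Y)$ genuinely coincide for $g \in G$. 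For finite subgroups of $S^1$ this reduces to the observation that the $g$-fixed submanifold of $Y$ does not depend on whether the ambient action is $S^1$ or $G$, so the cohomological contributions in (\ref{ASGIT}) match on the nose.
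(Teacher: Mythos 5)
Your proposal is correct and follows essentially the same route as the paper, which derives the corollary from Theorem \ref{rho-formula-cp} via the observation that $p^{!}$ amounts to restriction to the subgroup $G < S^1$ and that the $\rho$-invariant is natural under such restriction. You have simply made explicit the geometry behind that naturality claim (pullback of the $S^1$-bundle, choice of a cobounding $S^1$-manifold $Y$, and the fact that $\Gsign(g,Y)$ and $L(g,Y)$ only see the $g$-fixed data, which is unchanged upon restricting the action), which the paper leaves implicit.
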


For the map $p^{!} \co \sN (\CC P^{d-1}) \lra \widetilde \sN
(L^{2d-1} (\alpha_1))$ we have
\begin{equation}
p^{!} (s_{4i-2}) = t_{4i-2} \quad p^{!} (s_{4i}) = t_{4i}
\end{equation}
and hence it is surjective. If the map $\sS^s (\CC P^{d-1}) \ra \sN
(\CC P^{d-1}) \ra \sN (L^{2d-1}(\alpha_1))$ were surjective we could
use Corollary \ref{rho-formula-1-lens-sp} to give a formula for the
function $[\wrho]$. This is the case when $d = 2e$. In the case $d =
2e+1$ all the summands but the $\ZZ_N ( t_{4e})$ from $\sN
(L^{2d-1}(\alpha_1))$ are hit from $\sS (\CC P^{d-1})$. We need the
following

\begin{lem} \label{last-summand}
Let $d = 2e+1$ and let $a \in \sS (L^{2d-1}(\alpha_1))$ be such that
$a \mapsto \bt(\eta(a))=(0,\ldots,1) \in \sN (L^{2d-1}(\alpha_1))$,
i.e. $\bt(\eta(a))_{4i} = 0$ for $i \leq e-1$ and
$\bt(\eta(a))_{4e}=1$. Then
\[
\widetilde\rho (a) = 8 f + z \quad \in \quad \QQ\RhGm
\]
for some $z \in 4 \cdot \RhGm$.
\end{lem}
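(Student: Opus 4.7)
The plan is to use the join construction to lift to $L^{4e+3}(\alpha_1)$, where Corollary \ref{rho-formula-1-lens-sp} reaches all normal invariants, and then descend. Set $\tilde a := \Sigma_1(a) \in \sS^s(L^{4e+3}(\alpha_1))$. Two facts drive the argument: multiplicativity of $\rho$ under joins gives $\wrho(\tilde a) = f \cdot \wrho(a)$, and since $d = 2e+1$ the restriction map $\res \co \widetilde\sN(L^{4e+3}(\alpha_1)) \to \widetilde\sN(L^{4e+1}(\alpha_1))$ is an isomorphism, so diagram (\ref{susp-diagram}) forces $\eta(\tilde a)$ to have $\bt_{4e}$-coordinate equal to $1$ and $\bt_{4i}$-coordinate $0$ for $i \leq e-1$.

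In dimension $4e+3$ we are in the even case $d' = 2e+2$, so every class in $\widetilde\sN(L^{4e+3}(\alpha_1))$ is hit by $p^{!}$ from $\sS^s(\CC P^{2e+1})$. Choose $\tilde b \in \sS^s(\CC P^{2e+1})$ with $\bs_{4e}(\tilde b) = 1$, $\bs_{4i}(\tilde b) = 0$ for $1 \leq i \leq e-1$, and $\bs_{4i-2}(\tilde b) = \bt_{4i-2}(\eta(a))$, so that $\eta(p^{!}(\tilde b)) = \eta(\tilde a)$. Exactness of (\ref{ses-lens-2d-1}) combined with the identification $\wL^s_{4e+4}(G) \cong 4\RhGp$ of Theorem \ref{L(G)} gives $\tilde a = p^{!}(\tilde b) + \partial z$ for some $z \in \wL^s_{4e+4}(G)$, and the commutative diagram in Proposition \ref{ses-vs-rep-thy} yields $\wrho(\tilde a) \equiv \wrho(p^{!}(\tilde b)) \pmod{4\RhGp}$. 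Applying Corollary \ref{rho-formula-1-lens-sp} with only $\bs_{4e}(\tilde b)$ nonvanishing gives $\wrho(p^{!}(\tilde b)) = 8(f^2 - 1)$, whence
\[
f \cdot \wrho(a) \;=\; 8(f^2 - 1) + 4 w \qquad (w \in \RhGp),
\]
which rearranges to $f \cdot (\wrho(a) - 8f) = 4(w - 2) \in 4\RhGp$.

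The final step, which is the main obstacle, is to extract $\wrho(a) - 8f \in 4\RhGm$ from this equation. Equivalently, one must show that the multiplication-by-$f$ map $\mu_f \co \QQ\RhGm/4\RhGm \to \QQ\RhGp/4\RhGp$ is well-defined and injective. Well-definedness is verified by the explicit identity
\[
4 f (\chi^k - \chi^{-k}) \;=\; -4 \bigl( \chi^{-k} + 2\chi^{-k+1} + \cdots + 2\chi^{k-1} + \chi^k \bigr) \;\in\; 4\RhGp
\]
on the generators of $\RhGm$. Injectivity is settled via the cyclotomic decomposition $\QQ\RhG = \prod_{d \mid N,\, d > 1} \QQ(\zeta_d)$: the factor $d = 2$ contributes zero to $\QQ\RhGm$, so the vanishing of $f$ there produces no kernel, while in each factor $d > 2$ the element $f_d = (1 + \zeta_d)/(1 - \zeta_d)$ is invertible, allowing the required lattice divisibility in $\RhGm$ to be deduced from that in $\RhGp$ by tracking the images of the generators $\chi^k \pm \chi^{-k}$ under the cyclotomic projections.
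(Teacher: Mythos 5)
Your lift to $\tilde a = \Sigma_1(a) \in \sS^s(L^{4e+3}(\alpha_1))$, the observation that in dimension $4e+3$ the reduced normal invariants are all hit by $p^{!}$ from $\sS^s(\CC P^{2e+1})$, and the resulting equation $f\cdot\wrho(a) - 8f^2 \in 4\RhGp$ reproduce the first half of the paper's argument exactly; the paper just compresses it into the single displayed line $f\cdot\wrho(a) + y = 8(f^2-1)$ with $y \in 4\RhGp$. Up to that equation your proposal is sound.

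The divergence is in the final step, and this is where a genuine gap appears. The paper works with explicit polynomial representatives in $\QQ[\chi]$: it clears denominators by $(1-\chi)^2$, adds $8(1-\chi)^2$ to both sides to complete the square $8\cdot 4\chi + 8(1-\chi)^2 = 8(1+\chi)^2$, evaluates at $\chi=-1$ to get $(1+\chi)\mid(\hy+8)$, invokes a Gauss-lemma content argument to retain the factor of $4$ in $w = (\hy+8)/(1+\chi)$, and then splits $2w = w^+ + w^-$ into conjugation eigenspaces to isolate $\wrho(a) - 8f = -w^- \in 4\RhGm$. You instead reduce everything to the claim that $\mu_f \co \QQ\RhGm/4\RhGm \to \QQ\RhGp/4\RhGp$ is injective. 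That reformulation is clean and the claim is indeed true (the paper's computation run with $0$ in place of $8(f^2-1)$ proves it), but your justification is only a sketch and skips exactly the hard step. The obstacle is that $\RhG$, and hence $\RhGm$ and $\RhGp$, is a proper sublattice of $\prod_{d\mid N,\, d>1}\ZZ[\zeta_d]$, so it is \emph{not} the direct sum of its cyclotomic projections. Knowing that each component $z_d = 4 w_d/f_d$ is $4$ times something integral in $\ZZ[\zeta_d]$ (using that $f_d$ is a unit for $d>2$) does not put $z$ into $4\RhGm$. Even in the smallest case $N=4$ one finds $\RhGm \leftrightarrow 2\ZZ i \subsetneq \ZZ i$ and $\RhGp \leftrightarrow 2\ZZ\times 2\ZZ$ under the decomposition $\QQ\times\QQ(i)$, and the required factor of $4$ only emerges after matching these indices precisely; for larger $K$ the lattice bookkeeping grows considerably. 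The phrase ``tracking the images of the generators $\chi^k\pm\chi^{-k}$ under the cyclotomic projections'' is the entire substance of the lemma, and it is left undone. To repair the proposal, either prove the injectivity of $\mu_f$ directly --- the paper's eigenspace-and-content manipulation works verbatim with $0$ on the right-hand side, or use the $w_l$-machinery from section \ref{subsec:w-l} --- or carry out the index computation in the cyclotomic picture in full.
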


\begin{proof}
We will use the suspension map $\Sigma_1$ from section
\ref{subsec:join}. Our assumptions mean that $\bt (\eta(a))$ is not
in the image of the composition $\sS (\CC P^{d-1}) \ra \sN (\CC
P^{d-1}) \ra \sN (L^{2d-1}(\alpha_1))$. However, diagram
(\ref{susp-diagram}) tells us that $\bt (\eta (\Sigma_1(a)))$ is in
the image of $\sS (\CC P^{(d+1)-1}) \ra \sN (\CC P^{(d+1)-1}) \ra
\sN (L^{2(d+1)-1}(\alpha_1))$ and hence we have
\[
f \cdot \widetilde\rho (a) + y = 8 \cdot 1 \cdot (f^2 -1) \quad \in
\quad \QQ\RhGp
\]
for some $y \in 4 \cdot \RhGp$. We obtain the desired identity by
the following calculation. Let $\hrho \in \QQ[\chi]$ and $\hy \in 4
\cdot \ZZ[\chi]$ be representatives for $\wrho (a)$ and $y$. Then
\begin{align*}
(1+\chi)(1-\chi)\hrho + (1-\chi)^2 \hy & \equiv 8 \cdot (4 \chi) \quad \quad \quad \mod I \langle K \rangle \\
(1+\chi)(1-\chi)\hrho + (1-\chi)^2 (\hy+8) & \equiv 8 \cdot (1+\chi)^2 \quad \; \mod I \langle K \rangle \\
(1+\chi)(1-\chi)\hrho + (1-\chi)^2 (\hy+8) & = 8 \cdot (1+\chi)^2 +
g(\chi) (1 + \chi + \cdots + \chi^{N-1}) \in \QQ[\chi]
\end{align*}
for some $g(\chi) \in \QQ[\chi]$. Hence $\hy+8 = (1+\chi) w(\chi)$
for some $w(\chi) \in \QQ[\chi]$. Since $(\hy + 8) \in 4 \cdot \ZZ
[\chi]$, we obtain $w(\chi) \in 4 \cdot \ZZ[\chi]$. Further write $g
(\chi) = 2r+(1+\chi) g'(\chi) = r (1-\chi) + (1+\chi) (r+g'(\chi))$
for $r \in \QQ$, $g' (\chi) \in \QQ[\chi]$. We have
\begin{align*}
(1-\chi)\hrho + (1-\chi)^2 w(\chi) & = 8 \cdot (1+\chi) + g(\chi) (1
+ \chi^2 + \cdots \chi^{N-2}) \in \QQ[\chi]
\end{align*}
and further modulo $I \langle K \rangle$
\begin{align*}
(1-\chi)\hrho + (1-\chi)^2 w(\chi) & \equiv 8 \cdot (1+\chi) + r (1-\chi) (1 + \chi^2 + \cdots \chi^{N-2})  \\
\hrho + (1-\chi) w(\chi) & \equiv 8 \cdot f + r (1 + \chi^2 + \cdots
\chi^{N-2})
\end{align*}
Now $(1-\chi)w(\chi) = (2-(1+\chi))w(\chi) = 2w(\chi) - (\hy + 8)$.
Further $2w(\chi) = w^+(\chi) + w^-(\chi)$, where $w^\pm(\chi) := w(\chi) \pm w(\chi^{-1}) \in 4 \cdot \ZZ [\chi] / I \langle K \rangle $. Hence
\begin{equation*}
\wrho (a) - 8 \cdot f + w^-(\chi) = (\hy+8) - w^+ (\chi) + r (1 +
\chi^2 + \cdots + \chi^{N-2})
\end{equation*}
in $\QQ[\chi] / I \langle K \rangle$, while the left hand side of
the equation lies in the $(-1)$-eigenspace and the right-hand side
lies in the $(+1)$-eigenspace and hence both are equal to $0$. It
follows that
\[
\wrho (a)= 8 \cdot f - w^-(\chi).
\]
Putting $z = - w^-(\chi)$ yields the desired formula.
\end{proof}

\begin{lem} \label{all-summands-d=2e+1}
Let $d = 2e+1$ and let $a \in \sS (L^{2d-1}(\alpha_1))$. Then
\[
\widetilde\rho (a) = 8 \cdot \bt_{4e} (\eta(a)) \cdot f + \!\!
\sum_{1 \leq i \leq \lfloor d/2 \rfloor -1} \!\! 8 \cdot \bt_{4i}
(\eta(a)) \cdot (f^{d-2i} - f^{d-2i-2}) + z \quad \in \quad \QQ\RhGm
\]
for some $z \in 4 \cdot \RhGm$.
\end{lem}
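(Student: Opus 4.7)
The plan is to reduce to the even-dimensional case (where Corollary \ref{rho-formula-1-lens-sp} gives a complete formula) via the suspension map $\Sigma_1$, and then to invert multiplication by $f$ modulo $4$ by a polynomial manipulation analogous to the final calculation of Lemma \ref{last-summand}. Multiplicativity of $\rho$ under joins together with $\rho(L^1(\alpha_1)) = f$ yields the key identity $\wrho(\Sigma_1(a)) = f \cdot \wrho(a)$.

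In the suspended dimension $2(d+1)-1 = 4e+3$, with $d+1 = 2(e+1)$ even, the composition $\sS^s(\CC P^{2e+1}) \xra{p^{!}} \sS^s(L^{4e+3}(\alpha_1)) \xra{\eta} \widetilde\sN(L^{4e+3}(\alpha_1))$ is surjective, because the splitting invariants $\bs_2, \bs_4, \ldots, \bs_{4e}$ of $\sS^s(\CC P^{2e+1})$ cover all components of $\widetilde \sN(L^{4e+3}(\alpha_1))$ under $p^{!}$. Hence one can write $\Sigma_1(a) = p^{!}(b) + \partial z$ for some $b \in \sS^s(\CC P^{2e+1})$ and $z \in \wL^s_{4e+4}(G)$. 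Applying Corollary \ref{rho-formula-1-lens-sp} to $p^{!}(b)$, using $\wrho(\partial z) = \Gsign(z) \in 4\RhGp$, and identifying $\bt_{4i}(\eta(\Sigma_1(a))) = \bt_{4i}(\eta(a))$ via diagram (\ref{susp-diagram}) and the fact that $\res$ is an isomorphism for $d = 2e+1$, one obtains
\[
f \cdot \wrho(a) \equiv \sum_{i=1}^{e} 8 \bt_{4i}(\eta(a)) \cdot (f^{2e+2-2i} - f^{2e-2i}) \mod 4 \RhGp.
\]

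Let $X$ denote the right-hand side of the claimed formula; it lies in $\QQ\RhGm$ as a $\ZZ$-linear combination of odd powers of $f$. A direct calculation shows that $f \cdot X$ equals the displayed right-hand side plus the extra term $8\bt_{4e}(\eta(a))$, which, being a multiple of $8$, lies in $4\RhGp$. Hence $f \cdot (\wrho(a) - X) \in 4\RhGp$. The main obstacle is now the final algebraic step: concluding $\wrho(a) - X \in 4\RhGm$ from $f \cdot (\wrho(a) - X) \in 4\RhGp$. I would handle this exactly as in the closing argument of Lemma \ref{last-summand}: pick a polynomial representative $\hat u \in \QQ[\chi]$ of $u := \wrho(a) - X$, multiply the relation by $(1-\chi)^2$, use that $(1+\chi)$ divides $1+\chi+\cdots+\chi^{N-1}$ (which holds since $N = 2^K$) to extract a divisibility statement for $\hat u$ modulo $I\langle K \rangle$ by $4$, and finally project onto the $(-1)$-eigenspace of the conjugation involution to land in $4\RhGm$.
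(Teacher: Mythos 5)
Your proposal is correct and is essentially the modification of Lemma \ref{last-summand} that the paper intends: suspend once, use Corollary \ref{rho-formula-1-lens-sp} in the even dimension $2(e+1)$ where the $p^{!}$-image hits all of $\widetilde\sN$, write $\Sigma_1(a)=p^{!}(b)+\partial z$, and then reduce to the algebraic implication that $v\in\QQ\RhGm$ with $f\cdot v\in 4\RhGp$ forces $v\in 4\RhGm$, extracted by exactly the $(1-\chi)$-, $w^{\pm}$- and eigenspace-bookkeeping at the end of Lemma \ref{last-summand}. The only thing I would tighten is your phrasing of that final step: the $4$-divisibility of $z$ does not come from $(1+\chi)\mid(1+\chi+\cdots+\chi^{N-1})$ per se, but from writing the polynomial representative of the symmetric side as $(1+\chi)w$ with $w\in 4\cdot\ZZ[\chi]$ and then isolating $w^{-}(\chi):=w(\chi)-w(\chi^{-1})\in 4\RhGm$ before comparing eigenspaces.
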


\begin{proof}
Proof is by a straightforward modification of the proof of Lemma
\ref{last-summand}.
\end{proof}

\begin{prop} \label{rho-formula-lens-sp}
For the map $[\wrho] \co \widetilde \sN (L^{2d-1}(\alpha_1)) \lra
\QQ \RhG^{(-1)^d} / 4 \cdot \RhG^{(-1)^d}$ and an element $t =
(t_{2i})_i \in \widetilde \sN (L^{2d-1}(\alpha_1))$ we have that
\begin{align*}
d = 2e \; : \; [\widetilde \rho] (t) & = \sum_{i=1}^{e-1} 8 \cdot
t_{4i} \cdot f^{d-2i-2} \cdot (f^2-1) \\
d = 2e+1 \; : \; [\widetilde \rho] (t) & = \sum_{i=1}^{e-1} 8 \cdot
t_{4i} \cdot f^{d-2i-2} \cdot (f^2-1) + 8 \cdot t_{4e} \cdot f.
\end{align*}
\end{prop}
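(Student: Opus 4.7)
The plan is to handle the two cases $d = 2e$ and $d = 2e+1$ separately, deducing the first from Corollary \ref{rho-formula-1-lens-sp} combined with the transfer from $\CC P^{d-1}$, and the second directly from Lemma \ref{all-summands-d=2e+1}. Before starting, I would record that $[\wrho]$ is well defined as a map to $\QQ \RhG^{(-1)^d}/4 \cdot \RhG^{(-1)^d}$: the commutative diagram preceding Proposition \ref{ses-vs-rep-thy} shows that $\wrho \circ \partial$ coincides with the inclusion applied to $\Gsign$, whose image lies in $4 \cdot \RhG^{(-1)^d}$. On the right-hand side of the proposed formula, changing the lift of $\bt_{4i}(t) \in \ZZ_N$ to $\ZZ$ contributes multiples of $8N = 2^{K+3}$, which also lie in $4 \cdot \RhG^{(-1)^d}$, so the formula is likewise well defined.

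For the case $d = 2e$, I would first verify that the composition
\[
\sS^s(\CC P^{d-1}) \xra{\eta} \sN(\CC P^{d-1}) \xra{p^!} \sN(L^{2d-1}(\alpha_1)) \lra \widetilde \sN(L^{2d-1}(\alpha_1))
\]
is surjective. By Subsection \ref{subsec:cp}, the image of $\eta$ equals $\ker(\theta)$, which omits precisely the top summand $\bs_{4e-2} \in \ZZ_2 \cong L^s_{4e-2}(1)$. Under $p^!$ the surviving summands $\ZZ\langle \bs_{4i} \rangle$ and $\ZZ_2\langle \bs_{4i-2}\rangle$ for $1 \leq i \leq e-1$ map onto $\ZZ_N\langle \bt_{4i}\rangle$ and $\ZZ_2 \langle \bt_{4i-2}\rangle$ respectively, and the summand $\bt_{4e-2}$ missing from this image is exactly the one killed in passing to $\widetilde\sN$. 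Given $t \in \widetilde\sN$, I pick a preimage $b \in \sS^s(\CC P^{d-1})$ and set $a := p^!(b)$; applying Corollary \ref{rho-formula-1-lens-sp}, using $\lfloor d/2 \rfloor - 1 = e - 1$, together with $\bs_{4i}(\eta(b)) \equiv \bt_{4i}(t) \pmod N$ and the factorization $f^{d-2i} - f^{d-2i-2} = f^{d-2i-2}(f^2 - 1)$, produces the desired formula modulo $4 \cdot \RhGp$.

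For the case $d = 2e+1$, I would pick any $a \in \sS^s(L^{2d-1}(\alpha_1))$ with $\eta(a) = t$ and apply Lemma \ref{all-summands-d=2e+1} to obtain
\[
\wrho(a) = 8 \cdot \bt_{4e}(\eta(a)) \cdot f + \sum_{i=1}^{e-1} 8 \cdot \bt_{4i}(\eta(a)) \cdot (f^{d-2i} - f^{d-2i-2}) + z
\]
with $z \in 4 \cdot \RhGm$, and then reduce modulo $4 \cdot \RhGm$.

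The main obstacle is the surjectivity check in the case $d = 2e$, which is really just careful bookkeeping: one must match the summand decomposition of $\sN(\CC P^{d-1})$ against that of $\sN(L^{2d-1}(\alpha_1))$ through $p^!$ and confirm that the top summand killed by $\theta$ on the complex-projective side maps precisely to the summand $\bt_{4e-2}$ that disappears when passing to the reduced normal invariants on the lens-space side. Everything else follows immediately from the existing lemmas.
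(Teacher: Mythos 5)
Your proof follows the same route as the paper's: for $d=2e$ lift via $p^!$ from $\sS^s(\CC P^{d-1})$ using Corollary \ref{rho-formula-1-lens-sp} (after verifying the relevant surjectivity), and for $d=2e+1$ invoke Lemma \ref{all-summands-d=2e+1} directly. The paper's proof is terser — it simply asserts that the discussion after Corollary \ref{rho-formula-1-lens-sp} handles the $d=2e$ case — whereas you spell out the bookkeeping of which summands are hit and which one is killed on each side, but the underlying argument is identical and your version is correct.
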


\begin{proof}
It is enough to find for each $t \in \widetilde \sN
(L^{2d-1}(\alpha_1))$ some $a \in  \sS^s (L^{2d-1}(\alpha_1))$ with
$\bt(\eta(a))=t$ and for which we can calculate $\wrho (a) \in \QQ
\RhG^{(-1)^d}$. If $d = 2e$ then by discussion after Corollary
\ref{rho-formula-1-lens-sp} there is for each normal cobordism class
a fake lens space which fibers over a fake complex projective space
and hence the formula from Corollary \ref{rho-formula-1-lens-sp}
gives the desired formula. If $d = 2e+1$ then the same reasoning
applied to Lemma \ref{all-summands-d=2e+1} gives the desired
formula.
\end{proof}

\begin{cor} \label{rho-homomorphism}
The function $\wrho \co \sS^s(L^{2d-1}(\alpha_1)) \lra \QQ
\RhG^{(-1)^d}$ is a homomorphism.
\end{cor}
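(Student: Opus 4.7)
The plan is to split the argument according to the parity of $d$. The starting observation is that the identity $\wrho(a + \partial z) = \wrho(a) + \Gsign(z)$ from the preceding proposition shows $\wrho$ is already additive under the $\partial$-action of $\wL^s_{2d}(G)$, so it will suffice to verify additivity on a set of representatives for the coset decomposition $\sS^s(L^{2d-1}(\alpha_1))/\partial(\wL^s_{2d}(G)) \cong \widetilde \sN(L^{2d-1}(\alpha_1))$.

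For $d = 2e$, I would use the fact (noted in the discussion after Corollary \ref{rho-formula-1-lens-sp}) that the composition $\sS^s(\CC P^{d-1}) \xra{\eta} \sN(\CC P^{d-1}) \xra{p^!} \widetilde \sN(L^{2d-1}(\alpha_1))$ is surjective to write every $a \in \sS^s(L^{2d-1}(\alpha_1))$ as $a = p^!(b) + \partial z$ for some $b \in \sS^s(\CC P^{d-1})$ and $z \in \wL^s_{2d}(G)$. For $a_i = p^!(b_i) + \partial z_i$, invoking the homomorphism properties of $p^!$, $\partial$, $\eta$, and $\bs_{4i}$ together with the linear formula of Corollary \ref{rho-formula-1-lens-sp}, a direct computation yields
\[
\wrho(a_1 + a_2) = \wrho(p^!(b_1+b_2)) + \Gsign(z_1+z_2) = \wrho(a_1) + \wrho(a_2).
\]

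For $d = 2e+1$, the summand $\ZZ_N(t_{4e})$ of $\widetilde \sN(L^{2d-1}(\alpha_1))$ is not in the image of $p^!$, so the argument above fails directly. Instead I would invoke the suspension homomorphism $\Sigma_1 \co \sS^s(L^{2d-1}(\alpha_1)) \to \sS^s(L^{2d+1}(\alpha_1))$ from subsection \ref{subsec:join}. From the multiplicativity $\rho(L \ast L') = \rho(L) \cdot \rho(L')$ together with $\rho(L^1(\alpha_1)) = f$ one obtains $\wrho \circ \Sigma_1 = f \cdot \wrho$. Since $d+1$ is even, the even case already gives additivity of $\wrho$ on the target, and combining with the fact that $\Sigma_1$ is a homomorphism yields
\[
f \cdot \bigl(\wrho(a_1 + a_2) - \wrho(a_1) - \wrho(a_2)\bigr) = 0 \quad \textup{in} \quad \QQ\RhGp.
\]

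The hard part --- and the main technical obstacle --- is then cancelling $f$ to conclude that the difference itself vanishes, that is, showing that multiplication by $f \co \QQ \RhGm \to \QQ \RhGp$ is injective. The factorization $1 + \chi + \cdots + \chi^{N-1} = \prod_{k=1}^K (\chi^{2^{k-1}} + 1)$ into pairwise coprime cyclotomic polynomials yields via the Chinese remainder theorem a product decomposition $\QQ \RhG \cong \prod_{k=1}^K \QQ(\zeta_{2^k})$. In the factor $k = 1$ one has $\chi = -1$ and hence $f = 0$, but the involution $\chi \leftrightarrow \chi^{-1}$ is trivial in this factor so the $(-1)$-eigenspace is zero there; in each factor $k \geq 2$, the element $f$ is a nonzero element of a field and multiplication by $f$ is automatically injective. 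This provides the required injectivity and completes the proof.
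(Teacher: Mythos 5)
Your proof is correct and follows the paper's argument in essentially every respect: reducing to representatives modulo the $\partial$-action, handling $d = 2e$ via the transfer $p^!$ and the formula of Corollary \ref{rho-formula-1-lens-sp}, and handling $d = 2e+1$ by applying the suspension $\Sigma_1$ and then cancelling $f$ using the injectivity of multiplication by $f$ on $\QQ\RhGm$ (the paper's Lemma \ref{injectivity-mult-f}). Your proof of that injectivity --- via the CRT decomposition $\QQ\RhG \cong \prod_{k=1}^{K}\QQ(\zeta_{2^k})$, noting that the $(-1)$-eigenspace vanishes in the $k=1$ factor because the involution is trivial there, and that $f$ is a nonzero element of each field factor with $k \geq 2$ --- is a slightly more conceptual phrasing of the paper's own computation with the projections $\pr_l$ (which instead exhibits an explicit inverse of $1+\chi$ for $l \geq 1$ and uses the explicit basis $\chi^r - \chi^{N-r}$ of $\RhGm$ for $l=0$), but it is the same Chinese-remainder-theorem argument at bottom.
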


\begin{proof}
It is enough to show that for every $t$, $t' \in \widetilde \sN
(L^{2d-1}(\alpha_1))$ there exist elements (not necessarily unique)
$a$, $a'$ in $\sS^s (L^{2d-1}(\alpha_1))$ such that $\bt (\eta(a)) =
t$, $\bt (\eta(a')) = t'$ and $\wrho (a+a') = \wrho (a) + \wrho
(a')$. If this holds, then for any $x,x' \in \sS^s
(L^{2d-1}(\alpha_1))$ choose $a$ and $a'$ as above corresponding to
the classes $\bt (\eta(x))$, $\bt (\eta(y)) \in \sN (L^{2d-1}
(\alpha_1))$. Then $x = a +\partial(b)$ and $x' = a' +
\partial(b')$ for some $b$, $b' \in \partial \wL^s_{2d} (G)$ and
\begin{align*}
\wrho (x+x') = & \wrho (a + \partial b + a' + \partial b') = \wrho
(a + a') +
\wrho (\partial b + \partial b') \\
= &  \wrho (a) + \wrho(a') + \wrho (\partial b) + \wrho(\partial b')
= \wrho (x) + \wrho(x').
\end{align*}

When $d=2e$ we can associate to a given $t \in \sN
(L^{2d-1}(\alpha_1))$ an $a \in \sS^s (L^{2d-1}(\alpha_1))$ coming
from the $\sS (\CC P^{d-1})$, i.e. $a = p^{!} (b)$ where $b \in \sS
(\CC P^{d-1})$ such that $p^{!} (\eta (b)) = t$. When we have $t$,
$t'\in \sN (L^{2d-1}(\alpha_1))$, then $\wrho (a+a') = \wrho (p^{!}
(b) + p^{!} (b')) = \wrho (p^{!} (b + b')) = \res (\wrho_{S^1} (b +
b')) = \res (\wrho_{S^1} (b) + \wrho_{S^1} (b')) = \res (\wrho_{S^1}
(b)) + \res(\wrho_{S^1} (b')) = \wrho (a) + \wrho (a')$. Here $\res$
denotes the map on the representation rings induced by the inclusion
$G < S^1$.

When $d = 2e+1$ and $t \in \sN (L^{2d-1}(\alpha_1))$ we can do the
same unless $t_{4e} \neq 0$. In that case there is no fake lens
space in the normal cobordism class of $\bt$ which fibers over a
fake complex projective space and we have to use a different
argument. It follows from the formula in Proposition
\ref{rho-formula-lens-sp} that for $a, a' \in \sS^s
(L^{2d-1}(\alpha_1))$ we have $\wrho (a+a') = \wrho (a) + \wrho (a')
+ z$ for some $z \in 4 \cdot \RhG^-$. If $a$, $a'$ are in the same
normal cobordism class then $z = 0$. Our task is to show this for
any choice of $a$, $a'$. We use the fact that $\Sigma$ is a
homomorphism and that we have already proved the claim for $d =
2e+2$. That implies $\wrho (\Sigma (a+a')) = \wrho (\Sigma a +
\Sigma a')) = \wrho (\Sigma a) + \wrho (\Sigma a') = f \cdot \wrho
(a) + f \cdot \wrho(a')$. On the other hand $\wrho (\Sigma (a+a')) =
f \cdot \wrho (a+a') = f \cdot \wrho (a) + f \cdot \wrho(a') + f
\cdot z$. Hence it is enough to show that for any $z \in 4 \cdot
\RhG^-$ such that $f \cdot z = 0$ in $\QQ \RhG^+$ we have $z = 0$ in
$4 \cdot \RhG^-$. This is proved below in Lemma
\ref{injectivity-mult-f}.
\end{proof}

Now we proceed to the case of $\alpha_k$ where $k \in \NN$ is odd.

\begin{prop} \label{rho-formula-lens-sp-k}
For the map $[\wrho] \co \widetilde \sN (L^{2d-1}(\alpha_k)) \lra
\QQ \RhG^{(-1)^d} / 4 \cdot \RhG^{(-1)^d}$ an element $t =
(t_{2i})_i \in \widetilde \sN (L^{2d-1}(\alpha_k))$ we have that
\begin{align*}
d = 2e \; : \; [\widetilde \rho] (t) & = \sum_{i=1}^{e-1} 8 \cdot
t_{4i} \cdot f'_k \cdot f^{d-2i-2} \cdot (f^2-1) \\
d = 2e+1 \; : \; [\widetilde \rho] (t) & = \sum_{i=1}^{e-1} 8 \cdot
t_{4i} \cdot f'_k \cdot f^{d-2i-2} \cdot (f^2-1) + 8 \cdot t_{4e}
\cdot f'_k \cdot f.
\end{align*}
\end{prop}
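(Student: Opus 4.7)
The approach will be to mimic the proof of Proposition \ref{rho-formula-lens-sp}, routing everything through the composition
\[
\sS^s(\CC P^{d-2}) \xrightarrow{p^!} \sS^s(L^{2d-3}(\alpha_1)) \xrightarrow{\Sigma_k} \sS^s(L^{2d-1}(\alpha_k)),
\]
where $\Sigma_k$ is the join with $L^1(\alpha_k)$. Both factors are homomorphisms by the discussion in subsection \ref{subsec:join}, and for $b \in \sS^s(\CC P^{d-2})$ the multiplicativity of $\rho$ under the join combined with $\rho(L^1(\alpha_k)) = f_k = f \cdot f'_k$ (Lemma \ref{f_k-lem}) and the naturality used in Corollary \ref{rho-formula-1-lens-sp} give
\[
\wrho(\Sigma_k(p^!(b))) = f \cdot f'_k \cdot \res(\wrho_{S^1}(b)).
\]
First I would insert Theorem \ref{rho-formula-cp} applied to $\CC P^{d-2}$, use that $\res \circ p^!$ sends $\bs_{4i}$ to $\bt_{4i}$, and collapse powers of $f$ via $f \cdot f^{(d-1)-2i-2}(f^2-1) = f^{d-2i-2}(f^2-1)$. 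This yields exactly the claimed formula, but only for the $\bt_{4i}$ with $1 \le i \le \lfloor (d-1)/2 \rfloor - 1$.

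The next step is to handle the ``top'' summand not realized by this image: $\bt_{4e}$ when $d=2e+1$ and $\bt_{4(e-1)}$ when $d=2e$. I would use the suspension trick from Lemmas \ref{last-summand} and \ref{all-summands-d=2e+1}: for $a \in \sS^s(L^{2d-1}(\alpha_k))$ whose only nonzero normal-invariant component is this top term, form $\Sigma a \in \sS^s(L^{2d+1}(\alpha_k))$ by joining with $L^1(\alpha_1)$, so that $\wrho(\Sigma a) = f \cdot \wrho(a)$. In the lens space of dimension $2d+1$ the corresponding summand is no longer the top one and the formula is already established by the first step, so one can plug in the expression for $\wrho(\Sigma a)$ on the left and then solve for $\wrho(a)$ modulo $4 \cdot \RhG^{(-1)^d}$ by the same manipulation of polynomial representatives modulo $I\langle K \rangle$ used in the proof of Lemma \ref{last-summand}. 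Once the formula is verified on each generator of $\widetilde \sN(L^{2d-1}(\alpha_k))$, the argument of Corollary \ref{rho-homomorphism} promotes $\wrho$ to a homomorphism on $\sS^s(L^{2d-1}(\alpha_k))$, and the formula extends by linearity.

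The main obstacle will be the algebraic bookkeeping in the second paragraph. Multiplication by $f$ sends $\QQ\RhG^+$ into $\QQ\RhG^-$ with denominators, so the $(\pm 1)$-eigenspace parts of representatives in $\QQ[\chi]/I\langle K \rangle$ must be separated by hand exactly as in Lemma \ref{last-summand}, and the final cancellation modulo $4\cdot\RhG^{(-1)^d}$ relies on the injectivity of multiplication by $f$ on $4\cdot\RhG^{\mp}$ (Lemma \ref{injectivity-mult-f}). The extra factor $f'_k$, which lies in $\RhG^+$ by Lemma \ref{f_k-lem}, commutes with the eigenspace decomposition and can be carried through symbolically, so apart from this additional factor the analysis is essentially identical to the $\alpha_1$ case.
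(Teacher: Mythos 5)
Your overall plan — route through Theorem \ref{rho-formula-cp} via $\Sigma_k \circ p^!$, then use a suspension to pin down the one normal invariant that this composition misses — is close in spirit to what Wall and the paper do, but the route you chose differs from the paper's in a way that introduces a genuine gap in the odd case.

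\textbf{The gap.} For $d=2e+1$ the top summand you need to treat is $\bt_{4e} \subset \widetilde\sN(L^{4e+1}(\alpha_k))$. You suspend once to $L^{4e+3}(\alpha_k)$, i.e. $d''=2e+2$, and claim ``the corresponding summand is no longer the top one and the formula is already established by the first step.'' This is false: $\widetilde\sN(L^{4e+3}(\alpha_k)) \cong \bigoplus_{i=1}^{c''}\ZZ_N \oplus \bigoplus_{i=1}^{c''}\ZZ_2$ with $c''=\lfloor(d''-1)/2\rfloor = e$, so $\bt_{4e}$ is \emph{still} the top $\ZZ_N$-summand there, and your first step (which routes through $\CC P^{d''-2}=\CC P^{2e}$ and only controls $\bt_{4i}$ with $i\le \lfloor(d''-1)/2\rfloor - 1 = e-1$) does not reach it. The claim is only correct in the even case: for $d=2e$, the suspended $\bt_{4(e-1)}$ sits in $\widetilde\sN(L^{4e+1}(\alpha_k))$ with $c''=e$, so it is strictly below the top and is covered by your first step. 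The argument can be repaired by being explicit about the ordering: prove the even case completely (first step plus your suspension trick for its top term), and only then treat the odd case by suspending to $L^{4e+3}(\alpha_k)$ and invoking the \emph{fully established} even $\alpha_k$ formula, not just the first step. This is in effect the structure of the paper's proof; the paper avoids the second bite at the even top term altogether because, instead of reverting all the way to the $\CC P$ formula, it applies the already-proved Proposition \ref{rho-formula-lens-sp} (which contains the $8t_{4e'}f$ correction from Lemma \ref{all-summands-d=2e+1}) to $L^{4e-3}(\alpha_1)$ and multiplies by $f_k$, so that only one suspension — for odd $d$ — is needed.

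\textbf{A secondary slip.} You write that $f'_k$ ``lies in $\RhG^+$ by Lemma \ref{f_k-lem}.'' That lemma only asserts $f'_k\in\RhG$, and in fact $f'_k\notin\RhG^+$: with the paper's normalisation $\RhG^+$ requires $p(-1)\equiv 0 \pmod 2$, whereas $f'_k(-1)=k$ is odd. What you actually need — and what is true — is that $f'_k$ is a conjugation-invariant unit of $\RhG$, so multiplication by $f'_k$ (and by $f'^{-1}_k$) preserves $\RhG^\pm$ and $4\cdot\RhG^\pm$; the eigenspace bookkeeping then goes through. Say this instead of claiming $f'_k\in\RhG^+$.

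Finally, note that your composition $\Sigma_k\circ p^!$ also fails to realize the top $\ZZ_2$ summand $\bt_{4e-2}$ when $d=2e+1$, so one should remark that the independence of $[\wrho]$ from the $\bt_{4i-2}$'s is inherited from the $\CC P$ and $\alpha_1$ formulas rather than simply asserted.
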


\begin{proof}
We will use the calculation for $\alpha_1$ and the homeomorphisms
\[
L^{2d+1}(\alpha_k) \cong L^{2d-1}(\alpha_1) \ast L^1(\alpha_k) \quad
\mathrm{and} \quad L^{2d+1}(\alpha_k) \cong L^{2d-1}(\alpha_k) \ast
L^1(\alpha_1)
\]
For $d=2e$ recall the diagram
\begin{equation}
\begin{split}
\xymatrix{ \QQ \RhG^- \ar[d]^{\cdot f_k} & \sS^s (L^{4e-3}
(\alpha_1)) \ar[l]_(0.6){\wrho}
\ar[d]^{\Sigma_k} \ar@{-{>>}}[r]^{\eta} & \widetilde \sN (L^{4e-3} (\alpha_1)) \\
\QQ \RhG^+ & \sS^s (L^{4e-1} (\alpha_k)) \ar[l]_(0.6){\wrho}
\ar@{-{>>}}[r]^{\eta} & \widetilde  \sN (L^{4e-1} (\alpha_k))
\ar[u]_{\res}^{\cong} }
\end{split}
\end{equation}
Let $t \in \widetilde  \sN (L^{4e-1}(\alpha_k))$. Choose $x \in
\sS^s(L^{4e-3}(\alpha_1))$ such that $\bt (\eta (x)) = t = \res
(t)$. Then we have $\bt (\eta (\Sigma_k x)) = t$ and $[\wrho] (\eta
(\Sigma_k x)) = [\wrho (x) \cdot f_k]$ can be calculated using the
formulas from the case $k=1$.

For $d=2e+1$ recall the diagram
\begin{equation}
\begin{split}
\xymatrix{ \QQ \RhG^- \ar[d]^{\cdot f} & \sS^s (L^{4e+1} (\alpha_k))
\ar[l]_(0.6){\wrho} \ar[d]^{\Sigma_1} \ar@{-{>>}}[r]^{\eta} & \widetilde  \sN (L^{4e+1} (\alpha_k)) \\
\QQ \RhG^+ & \sS^s (L^{4e+3} (\alpha_k)) \ar[l]_(0.6){\wrho}
\ar@{-{>>}}[r]^{\eta} & \widetilde  \sN (L^{4e+3} (\alpha_k))
\ar[u]_{\res}^{\cong} }
\end{split}
\end{equation}
Let $t \in \widetilde  \sN (L^{4e+1}(\alpha_k))$. Choose $x \in
\sS^s(L^{4e+1}(\alpha_1))$ such that $\bt (\eta (x)) = t$. Then we
have $\bt (\eta (\Sigma_1 x)) = t$ and $\wrho (\Sigma_1 x) = \wrho
(x) \cdot f$. We obtain the equation
\[
f \cdot \wrho(x) + y = \sum_{i=1}^{e-1} 8 \cdot t_{4i} \cdot f'_k
\cdot f^{d+1-2i-2} \cdot (f^2-1) + 8 \cdot t_{4e} \cdot f'_k \cdot
(f^2 -1) \in  \QQ \RhG^+
\]
for some $y \in 4 \cdot \RhG^+$ using the formulas from the case $d
= 2e+2$ which we have already dealt with. Now a modification of the
argument from the proof of Lemma \ref{last-summand} can be used to
obtain the formula for $[\wrho] (\eta (x))$.
\end{proof}

\begin{cor}
The function $\wrho \co \sS^s(L^{2d-1}(\alpha_k)) \lra \QQ
\RhG^{(-1)^d}$ is a homomorphism.
\end{cor}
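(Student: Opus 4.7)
The plan is to mirror the argument of Corollary~\ref{rho-homomorphism}, now using as additional inputs the explicit formula for $[\wrho]$ established in Proposition~\ref{rho-formula-lens-sp-k}, the multiplicativity of $\rho$ under joins, and the already established additivity on $\sS^s(L^{2d-1}(\alpha_1))$. As in that earlier argument, the commutative diagram of Proposition~\ref{ses-vs-rep-thy} reduces the problem to the following claim: for each pair $t, t' \in \widetilde \sN(L^{2d-1}(\alpha_k))$ I need to exhibit representatives $a, a' \in \sS^s(L^{2d-1}(\alpha_k))$ with $\bt(\eta(a))=t$, $\bt(\eta(a'))=t'$ satisfying $\wrho(a+a') = \wrho(a) + \wrho(a')$. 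Any other pair $x, x'$ in the same normal cobordism classes will differ from $a, a'$ by elements in the image of $\partial$, on which $\wrho$ factors through $\Gsign$ and is therefore automatically additive.

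For $d=2e$ I would use the first suspension diagram from the proof of Proposition~\ref{rho-formula-lens-sp-k}. Since $\res \co \widetilde \sN(L^{4e-1}(\alpha_k)) \to \widetilde \sN(L^{4e-3}(\alpha_1))$ is an isomorphism, I lift $t,t'$ to $s, s' \in \widetilde \sN(L^{4e-3}(\alpha_1))$, choose preimages $x,x' \in \sS^s(L^{4e-3}(\alpha_1))$, and set $a := \Sigma_k x$, $a' := \Sigma_k x'$. Multiplicativity of $\rho$ under joins yields $\wrho(\Sigma_k y) = f_k \cdot \wrho(y)$ for every $y$; combined with the fact that $\Sigma_k$ is a homomorphism (section~\ref{subsec:join}) and with Corollary~\ref{rho-homomorphism}, this gives
\[
\wrho(a+a') = f_k \cdot \wrho(x+x') = f_k \cdot \wrho(x) + f_k \cdot \wrho(x') = \wrho(a) + \wrho(a'),
\]
as required.

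For $d=2e+1$ the $\Sigma_k$ trick is no longer directly available, so I would adapt the last step of the proof of Corollary~\ref{rho-homomorphism}. Starting from arbitrary representatives $a,a' \in \sS^s(L^{4e+1}(\alpha_k))$ of $t, t'$, Proposition~\ref{rho-formula-lens-sp-k} gives $\wrho(a+a') = \wrho(a) + \wrho(a') + z$ for some $z \in 4 \cdot \RhG^-$, and it remains to show $z=0$. I would apply $\Sigma_1 \co \sS^s(L^{4e+1}(\alpha_k)) \to \sS^s(L^{4e+3}(\alpha_k))$ and combine three ingredients: the identity $\wrho(\Sigma_1 y) = f \cdot \wrho(y)$, the homomorphism property of $\Sigma_1$, and the even case for the same $\alpha_k$ just settled. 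This will yield $f \cdot z = 0$ in $\QQ \RhG^+$, and Lemma~\ref{injectivity-mult-f} then forces $z = 0$. The main obstacle is precisely this last step: the odd case genuinely depends on the prior even case for the same $\alpha_k$ and on Lemma~\ref{injectivity-mult-f}, so the two cases must be handled in the correct order.
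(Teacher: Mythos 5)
Your proposal is correct and takes essentially the same approach as the paper: for $d=2e$ the paper also reduces to representatives of the form $\Sigma_k x$ with $x\in\sS^s(L^{4e-3}(\alpha_1))$, and for $d=2e+1$ it likewise invokes the even case plus Lemma~\ref{injectivity-mult-f} via $\Sigma_1$, exactly the two ingredients you identify. The only cosmetic slip is the word "lift" when you pass from $t$ to $s=\res(t)$ --- $\res$ is applied in the forward direction, but since it is an isomorphism this is harmless.
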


\begin{proof}
Just as in the case $\alpha_1$ we have to show that the addition of
elements in different normal cobordism classes works. For this it is
enough to find suitable representatives. In the case $d=2e$ we can
choose in each normal cobordism class an element coming from $\sS^s
(L^{4e-3} (\alpha_1))$. In the case $d=2e+1$ there is again a
problem with the summand $\ZZ_N (t_{4e})$ which can be resolved by
the same reasoning as in the case $\alpha_1$.
\end{proof}

\begin{cor}
The function $\wrho \co \sS^s(L^{2d-1}(\alpha)) \lra \QQ
\RhG^{(-1)^d}$ is a homomorphism.
\end{cor}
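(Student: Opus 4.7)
The plan is to reduce to the previous corollary, which established the result for $\sS^s(L^{2d-1}(\alpha_k))$, by means of the homotopy equivalence supplied by Corollary \ref{lens-spaces-give-all-htpy-types}: choose $k \in \NN$ and a homotopy equivalence $h \co L^{2d-1}(\alpha) \ra L^{2d-1}(\alpha_k)$. Post-composition with $h$ gives a map
\[
h_\ast \co \sS^s(L^{2d-1}(\alpha)) \lra \sS^h(L^{2d-1}(\alpha_k)), \qquad [f \co M \ra L^{2d-1}(\alpha)] \mapsto [h \circ f],
\]
and unwinding the definition $\wrho(x) = \rho(\textup{source}) - \rho(\textup{target})$ one finds
\[
\wrho_\alpha(a) \;=\; \wrho_{\alpha_k}(h_\ast(a)) \;+\; C, \qquad C := \rho(L^{2d-1}(\alpha_k)) - \rho(L^{2d-1}(\alpha)).
\]

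Next I would verify two structural inputs. First, $\wrho$ descends to a homomorphism on $\sS^h(L^{2d-1}(\alpha_k))$: this combines Remark \ref{rho-factors} (which gives the factorization through $\sS^s \ra \sS^h$) with the homomorphism property on $\sS^s(L^{2d-1}(\alpha_k))$ from the previous corollary, the forgetful map being itself a homomorphism from the Ranicki picture. Second, $h_\ast$ is affine for the Ranicki abelian group structures, in the sense that $h_\ast(a+b) = h_\ast(a) + h_\ast(b) - h_\ast(0)$; this follows from the naturality of the algebraic surgery exact sequence for homotopy equivalences of Poincar\'e complexes, with the translation constant $h_\ast(0) = [h] \in \sS^h(L^{2d-1}(\alpha_k))$. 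Moreover one computes $\wrho_{\alpha_k}([h]) = \rho(L^{2d-1}(\alpha)) - \rho(L^{2d-1}(\alpha_k)) = -C$.

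With these two facts in hand, the conclusion is an elementary manipulation:
\begin{align*}
\wrho_\alpha(a+b) &= \wrho_{\alpha_k}(h_\ast(a+b)) + C \\
&= \wrho_{\alpha_k}(h_\ast(a) + h_\ast(b) - [h]) + C \\
&= \wrho_{\alpha_k}(h_\ast(a)) + \wrho_{\alpha_k}(h_\ast(b)) - \wrho_{\alpha_k}([h]) + C \\
&= \wrho_\alpha(a) + \wrho_\alpha(b),
\end{align*}
where the last step uses $\wrho_{\alpha_k}([h]) = -C$ to cancel the $C$'s on both sides.

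The main obstacle is the affine property of $h_\ast$: since the homotopy equivalence $h$ is not simple in general, it does not induce a map $\sS^s(L^{2d-1}(\alpha)) \ra \sS^s(L^{2d-1}(\alpha_k))$, and the naive geometric post-composition differs from the canonical Ranicki identification of the $\sS^h$-groups of homotopy equivalent Poincar\'e complexes by the class $[h]$. Keeping track of this base-point shift and verifying compatibility with the group structure via \cite[chapter 18]{Ranicki(1992)} is the one technical checkpoint; the remainder is bookkeeping.
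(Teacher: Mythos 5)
Your proposal is correct and follows essentially the same route as the paper: reduce to the $\alpha_k$ case via the homotopy equivalence from Corollary \ref{lens-spaces-give-all-htpy-types}, factor $\wrho$ through $\sS^h$ via Remark \ref{rho-factors}, and use the Ranicki composition formula to relate geometric post-composition to the algebraic homomorphism $f_\ast$. Your ``affine'' identity $h_\ast(a+b) = h_\ast(a)+h_\ast(b)-[h]$ is just a reformulation of the composition formula $f_\ast a = b - c$ from \cite[Theorem 2.3]{Ranicki(2008)} that the paper invokes, so the two arguments coincide up to bookkeeping.
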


\begin{proof}
From Corollary \ref{lens-spaces-give-all-htpy-types} we have that
for some $k \in  \NN$ there is a homotopy equivalence $f \co
L^{2d-1}(\alpha) \ra L^{2d-1} (\alpha_k)$. It induces a homomorphism
$f_\ast \co \sS^s(L^{2d-1}(\alpha)) \ra \sS^s(L^{2d-1}(\alpha_k))$.
We will show that $\wrho = \wrho \circ f_\ast$. This implies that
$\wrho$ is a homomorphism of $\sS^s(L^{2d-1}(\alpha))$ since it is
equal to a composition of homomorphisms.

We use the observation from Remark \ref{rho-factors} and the
composition formula of \cite[Theorem 2.3]{Ranicki(2008)}. Let $h \co
L \ra L^{2d-1}(\alpha)$ represent an element $a \in \sS^s
(L^{2d-1}(\alpha))$ and note that the homotopy equivalence $f$
represents an element in $\sS^h (L^{2d-1}(\alpha_k))$, call it $b$.
The composition $h \circ f$ represents another element in $\sS^h
(L^{2d-1}(\alpha_k))$, call it $c$. The formula of \cite[Theorem
2.3]{Ranicki(2008)} says $f_\ast a = b-c$. Now clearly
\begin{align*}
\wrho (f_\ast a) & = \wrho (b) -\wrho(c) = \\
 & = \rho (L) - \rho
(L^{2d-1}(\alpha_k)) - \rho(L^{2d-1}(\alpha)) +
\rho(L^{2d-1}(\alpha_k)) = \wrho (a).
\end{align*}
This finishes the proof.
\end{proof}


\section{Calculations} \label{sec:calculation}


We want to prove Theorem \ref{main-thm} by investigating the short
exact sequence (\ref{ses-lens-2d-1}) using the relation to a short
exact sequence from representation theory of $G$ via the
$\rho$-invariant as described in Proposition \ref{ses-vs-rep-thy}.
Theorem \ref{main-thm} is obtained when we put together statements
of Theorems \ref{how-to-split-alpha-k}, \ref{T-2}
 and \ref{T-N}.

\begin{thm} \label{how-to-split-alpha-k}
Let $\bar T = \ker [\wrho] \co \widetilde \sN (L^{2d-1}(\alpha))
\lra \QQ\RhG^{(-1)^d}/4 \cdot \RhG^{(-1)^d}$. Then we have
\[
\sS^s (L^{2d-1}(\alpha)) \cong \bar \Sigma \oplus \bar T
\]
where $\bar \Sigma = \wrho (\sS^s (L^{2d-1}(\alpha)))$ is a free
abelian group of rank $N/2-1$ if $d=2e+1$ and of rank $N/2$ if
$d=2e$.
\end{thm}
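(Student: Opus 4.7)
The plan is to exploit the commutative diagram of Proposition \ref{ses-vs-rep-thy}, in which the left vertical $\Gsign$ is an isomorphism onto $4\cdot\RhG^{(-1)^d}$. First I introduce the auxiliary subgroup
\[
\bar T' := \ker\bigl(\wrho \co \sS^s(L^{2d-1}(\alpha)) \lra \QQ\RhG^{(-1)^d}\bigr),
\]
which fits tautologically into a short exact sequence $0 \to \bar T' \to \sS^s(L^{2d-1}(\alpha)) \xra{\wrho} \bar\Sigma \to 0$. The strategy then has three steps: (i) identify $\bar T'$ with $\bar T$ by a diagram chase, (ii) show that $\bar\Sigma$ is free abelian of the stated rank, and (iii) split the sequence using the freeness of $\bar\Sigma$.

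For (i), commutativity in Proposition \ref{ses-vs-rep-thy} implies that $\eta$ carries $\bar T'$ into $\bar T$. For surjectivity of the induced map $\eta \co \bar T' \to \bar T$, I lift $t \in \bar T$ to an arbitrary $x \in \sS^s(L^{2d-1}(\alpha))$; then $\wrho(x) \in 4\cdot\RhG^{(-1)^d}$ lies in the image of $\Gsign$, hence equals $\wrho(\partial z)$ for some $z \in \wL^s_{2d}(G)$, and $x - \partial z \in \bar T'$ still maps to $t$. For injectivity, if $\partial z \in \bar T'$, then $\Gsign(z) = \wrho(\partial z) = 0$ forces $z = 0$ since $\Gsign$ is an isomorphism. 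Thus $\eta$ restricts to an isomorphism $\bar T' \cong \bar T$, and composing its inverse with the inclusion $\bar T' \hookrightarrow \sS^s(L^{2d-1}(\alpha))$ produces the desired splitting $\lambda \co \bar T \to \sS^s(L^{2d-1}(\alpha))$ referred to in the introduction.

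For (ii), $\bar\Sigma$ is torsion-free as a subgroup of the $\QQ$-vector space $\QQ\RhG^{(-1)^d}$, and it is finitely generated because $\sS^s(L^{2d-1}(\alpha))$ is a two-step extension of the finitely generated groups $\wL^s_{2d}(G)$ and $\widetilde\sN(L^{2d-1}(\alpha))$ by Theorem \ref{L(G)} and (\ref{red-ni-lens-spaces}). Hence $\bar\Sigma$ is free abelian. Its rank is pinned down by the inclusions
\[
4\cdot\RhG^{(-1)^d} \;=\; \im(\Gsign) \;=\; \wrho\bigl(\partial\wL^s_{2d}(G)\bigr) \;\subseteq\; \bar\Sigma \;\subseteq\; \QQ\RhG^{(-1)^d},
\]
so $\textup{rk}(\bar\Sigma) = \dim_\QQ\QQ\RhG^{(-1)^d}$. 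Using the decomposition $\QQ\RhG \cong \bigoplus_{j=1}^{K}\QQ(\zeta_{2^j})$ and the fact that complex conjugation acts trivially on $\QQ(\zeta_2)=\QQ$ and has fixed subspace of dimension $2^{j-2}$ on $\QQ(\zeta_{2^j})$ for $j \geq 2$, a direct count gives $\dim_\QQ\QQ\RhGp = N/2$ and $\dim_\QQ\QQ\RhGm = N/2-1$, which are precisely the stated ranks for $d=2e$ and $d=2e+1$ respectively.

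For (iii), since $\bar\Sigma$ is free abelian the short exact sequence $0 \to \bar T \to \sS^s(L^{2d-1}(\alpha)) \to \bar\Sigma \to 0$ obtained from (i) splits, yielding the asserted decomposition $\sS^s(L^{2d-1}(\alpha)) \cong \bar\Sigma \oplus \bar T$. The only substantive point is the dimension computation for $\QQ\RhG^{(-1)^d}$ establishing the rank of $\bar\Sigma$; the rest is diagram chasing that becomes essentially automatic once $\wrho$ is known to be a homomorphism, which has been established in the preceding section.
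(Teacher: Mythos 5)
Your proof is correct and takes essentially the same route as the paper's: both argue via the commutative diagram of Proposition \ref{ses-vs-rep-thy} that $\bar T$ is isomorphic to $\ker(\wrho \co \sS^s(L^{2d-1}(\alpha)) \to \bar\Sigma)$ (the paper phrases this as the right square becoming a pullback after shrinking codomains to images, while you carry out the same chase by hand using the isomorphism $\Gsign$), and both then split the resulting short exact sequence by observing that $\bar\Sigma$ is free abelian of the rank forced by the sandwich $\wrho(\partial\wL^s_{2d}(G)) \subseteq \bar\Sigma \subseteq \QQ\RhG^{(-1)^d}$. Your explicit remark that $\bar\Sigma$ is finitely generated --- hence free, being a torsion-free subgroup of a $\QQ$-vector space --- is a small but welcome detail that the paper leaves implicit.
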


\begin{proof} Recall the commutative diagram of
Proposition \ref{ses-vs-rep-thy}. Since $\wrho$ is a homomorphism,
we have that $\bar \Sigma$ is a subgroup of $\QQ \RhG^{(-1)^d}$,
which as an abelian group is a direct sum of $N/2-1$ copies of $\QQ$
if $d=2e+1$ and of $N/2$ copies of $\QQ$ if $d=2e$. It contains a
subgroup $\wrho (\partial \widetilde L^s_{2d} (G))$ which is a free
abelian group of the same rank as the theorem claims for $\bar
\Sigma$ in the respective cases. The claim about the rank of $\bar
\Sigma$ follows.

Now replace in the diagram of Proposition \ref{ses-vs-rep-thy} the
middle and the third term of the lower sequence by the image of
$\wrho$ and by the image of $[\wrho]$ respectively. Then the right
hand square becomes a pullback square. It follows that $\bar T$ is
isomorphic to the kernel of the map $\wrho \co \sS^s
(L^{2d-1}(\alpha)) \lra \bar \Sigma$. We obtain a short exact
sequence of abelian groups
\[
0 \lra \bar T \xra{\lambda} \sS^s (L^{2d-1}(\alpha)) \xra{\wrho}
\bar \Sigma \lra 0
\]
where $\bar \Sigma$ is a free abelian group and hence the sequence
splits.
\end{proof}

So our goal is to understand the subgroup $\bar T$ of $\widetilde
\sN (L^{2d-1}(\alpha_1))$, which is a group isomorphic to the direct
sum $T_N (d) \oplus T_2(d)$ of an $N$-torsion group $T_N(d)$ and a
$2$-torsion group $T_2(d)$
\[
T_N (d) = \bigoplus_{i=1}^{c} \ZZ_N = \bigoplus_{i=1}^{c} \ZZ_N
(t_{4i}) \quad T_2 (d) = \bigoplus_{i=1}^{c} \ZZ_2 =
\bigoplus_{i=1}^{c} \ZZ_2 (t_{4i+2}).
\]
where $c = \lfloor (d-1)/2 \rfloor$. We will denote $\bar T_N (d) =
\bar T \cap T_N (d)$, $\bar T_2 (d) = \bar T \cap T_2 (d)$ and we
will determine the two subgroups separately.

\begin{thm} \label{T-2}
We have
\[
\bar T_2 (d) = T_2 (d)
\]
\end{thm}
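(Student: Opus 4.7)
The plan is to observe that the claim reduces to showing $T_2(d)\subseteq \ker[\wrho]$, since the reverse inclusion $\bar T_2(d)\subseteq T_2(d)$ is immediate from the definition $\bar T_2(d)=\bar T\cap T_2(d)$. For this inclusion, I would simply inspect the explicit formula for $[\wrho]$ already established in Proposition \ref{rho-formula-lens-sp-k} and notice that it involves only the components $t_{4i}$ of a normal invariant, never the components $t_{4i-2}$; hence elements of $T_2(d)$ are automatically killed by $[\wrho]$.

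In a little more detail, I would first reduce to the lens space case: by Corollary \ref{lens-spaces-give-all-htpy-types} there exists a homotopy equivalence $h\co L^{2d-1}(\alpha)\ra L^{2d-1}(\alpha_k)$ for some odd $k$, and the induced map on structure sets is a homomorphism by the last Corollary of Section \ref{sec:rho-invariant}, which moreover satisfies $\wrho=\wrho\circ h_\ast$. The map $h$ also induces an isomorphism on $\widetilde\sN$ respecting the decomposition of (\ref{red-ni-lens-spaces}) into the $N$-torsion summand $T_N(d)$ and the $2$-torsion summand $T_2(d)$. So it suffices to verify the statement for the standard model $L^{2d-1}(\alpha_k)$.

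Now pick $t=(t_{2i})_i\in T_2(d)$, so that $t_{4i}=0$ for all $1\leq i\leq c$ and only the components $t_{4i-2}$ are possibly nonzero. Substituting into the formula of Proposition \ref{rho-formula-lens-sp-k},
\[
[\wrho](t)=\sum_{i=1}^{e-1}8\cdot t_{4i}\cdot f'_k\cdot f^{d-2i-2}(f^2-1)\quad(\text{plus }8\,t_{4e}f'_kf\text{ if }d=2e+1),
\]
every term vanishes because each summand carries a factor $t_{4i}$ (or $t_{4e}$). Hence $[\wrho](t)=0$ in $\QQ\RhG^{(-1)^d}/4\cdot\RhG^{(-1)^d}$, which gives $t\in\bar T$. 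Therefore $T_2(d)\subseteq \bar T\cap T_2(d)=\bar T_2(d)$, and combined with the trivial reverse inclusion this yields the claimed equality.

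There is no real obstacle here: the work has already been done in Proposition \ref{rho-formula-lens-sp-k}, and the theorem is essentially a reading of that formula. The only mild subtlety is the passage from a general fake lens space $L^{2d-1}(\alpha)$ to the standard $L^{2d-1}(\alpha_k)$, which is handled by the naturality of $\wrho$ along the homotopy equivalence $h$.
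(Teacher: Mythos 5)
Your proof is correct and is essentially the same as the paper's, which consists of the single observation that the formula of Proposition \ref{rho-formula-lens-sp} (equivalently \ref{rho-formula-lens-sp-k}) for $[\wrho]$ involves only the coordinates $t_{4i}$ and never $t_{4i-2}$. The reduction to the standard models $L^{2d-1}(\alpha_k)$ that you spell out is implicit in the paper's conventions but not wrong to make explicit.
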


\begin{proof}
By Proposition \ref{rho-formula-lens-sp} the formula for $[\wrho]$
only depends on $t_{4i}$.
\end{proof}

\begin{thm} \label{T-N}
We have
\[
\bar T_N (d) = \bigoplus_{i=1}^{c} \ZZ_{2^{\min\{K,2i\}}}
\]
where $c = \lfloor (d-1)/2 \rfloor$.
\end{thm}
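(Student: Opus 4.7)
The plan is to use the explicit formula of Proposition \ref{rho-formula-lens-sp} to convert the computation of $\bar T_N(d) = \ker\bigl([\wrho]|_{T_N(d)}\bigr)$ into a concrete linear algebra problem over $\ZZ$, then determine the abstract abelian group structure of the answer.

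Concretely, I would substitute $y := 1 - \chi$, so that $\chi = 1-y$ and $1+\chi = 2-y$. Using the identity $f^2 - 1 = 4\chi/(1-\chi)^2$, the formula for $[\wrho]$ becomes a rational expression in $y$ with denominators $y^{d-2i}$, and the condition $[\wrho](t) \in 4 \RhG^{(-1)^d}$ translates, after clearing the common denominator $y^{d-2}$, into the requirement that a certain polynomial
\[
8 \sum_{i=1}^{e-1} t_{4i} \cdot (1-y)(2-y)^{d-2i-2} y^{2i-2} \quad (\text{plus a parallel term if } d=2e+1)
\]
lie in the ideal $y^{d-2}\RhG$. Using the description $\RhG = \ZZ[y]/(1+\chi+\ldots+\chi^{N-1})$ with the Taylor expansion $M(y) := 1+\chi+\ldots+\chi^{N-1} = \sum_{k\geq 0}(-1)^k\binom{N}{k+1} y^k$, this divisibility amounts to requiring that the polynomial, viewed modulo $y^{d-2}$, be a $\ZZ$-multiple of $M(y)$. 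Expanding the resulting identity coefficient-by-coefficient in $y$ produces a system of linear equations over $\ZZ$ in the $t_{4i}$'s whose kernel is exactly $\bar T_N(d)$.

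The crucial number-theoretic input is Kummer's theorem, giving $v_2\bigl(\binom{2^K}{k+1}\bigr) = K - v_2(k+1)$ for $1 \leq k+1 \leq N$; this controls both the diagonal and the off-diagonal entries of the system. Solving the system iteratively (or, equivalently, computing its Smith normal form) should yield the invariants $2^{\min\{K, 2i\}}$ for $i = 1, \ldots, c$, giving the isomorphism $\bar T_N(d) \cong \bigoplus_{i=1}^c \ZZ_{2^{\min\{K,2i\}}}$. The extra $8 t_{4e} f$ term appearing when $d=2e+1$ enters the same framework as an additional coordinate, by the same $y$-adic analysis applied to $f = (2-y)/y$.

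The main obstacle is the Smith normal form computation. Simply setting all but one $t_{4i}$ to zero does \emph{not} reproduce the correct summands: the joint kernel is strictly larger than the direct product of per-coordinate kernels, because cross-dependencies coming from the off-diagonal entries of the system permit tuples $(t_{4i})_i$ that fail the per-coordinate constraints. One can already verify this in small examples (for instance, $K=4$, $d=5$, where the joint kernel contains the relation $t_8 \equiv 2 t_4 \pmod 4$ that mixes the two coordinates non-trivially). Correctly extracting the Smith invariants therefore requires a careful two-stage analysis: first, show via the triangular structure of the $y$-adic system that the kernel has order exactly $\prod_i 2^{\min\{K,2i\}}$; second, exhibit enough explicit kernel elements (equivalently, a suitable change of basis in $T_N(d)$) to confirm that the invariants match $2^{\min\{K,2i\}}$ as claimed.
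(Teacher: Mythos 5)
Your proposal outlines a genuinely different route from the paper's, but it stops precisely where the work starts. You translate the containment $[\wrho](t) \in 4\,\RhG^{(-1)^d}$ into a $y$-adic coefficient system ($y=1-\chi$), invoke Kummer's theorem for $v_2\binom{2^K}{j}$, and then announce that a Smith normal form computation ``should yield'' the invariants $2^{\min\{K,2i\}}$. You are also candid that this is the ``main obstacle'' and that a naive per-coordinate computation gives the wrong answer because the system has off-diagonal cross-relations (your $K=4$, $d=5$ example with $t_8 \equiv 2 t_4 \pmod 4$). But identifying the obstacle is not the same as surmounting it, and the cross-relations are exactly where essentially all of the paper's Section~5 is spent: the $w_l$-valuation technology built on the CRT splitting $\QQ\RhG \cong \prod_l \QQ(\zeta_{2^{l+1}})$ (subsection~5.1), the explicit ``best'' polynomials $p_k$, $q_n$, and especially the recursively defined $r_n^\pm$ with mixed lower-order terms such as $r_2^- = q_2 + 2^3 q_0$ (subsection~5.2), and the inductive theorem $A^k_K(d) = B_K(d)$ (subsection~5.3). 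Your proposed ``two-stage analysis'' has no concrete mechanism for either stage, whereas the paper's $r_n^\pm$ are precisely the explicit change of basis diagonalizing the system, and the $w_l$-criteria (Theorem~\ref{w_l-theorem}) are precisely the tool that certifies when a candidate lies in $4\RhG$ or not.

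There is also a structural concern worth flagging about the $y$-adic approach itself. The ring $\ZZ[\chi]/I\langle K\rangle$ is not local at $2$: it splits as a product of $K$ cyclotomic rings $\ZZ[\zeta_{2^{l+1}}]$, each with its own prime over $2$, and the paper's valuations $w_l$ track these separately, with the needed lower bound $2+K-l-2^{-l}$ depending on the CRT factor $l$. Collapsing everything into a single Taylor expansion against $M(y)$ is not wrong in principle (since $M(y)$ has degree $N-1$ and encodes all of these factors), but it discards the block-diagonal structure that makes the paper's bookkeeping tractable, so it is not at all clear that the resulting integer matrix has a ``triangular structure'' from which the invariant factors can be read off. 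Additionally, a small arithmetic slip: after clearing the denominator $y^{d-2}$ the $i$-th term should carry a factor $32 t_{4i}$, not $8 t_{4i}$, since $f^2-1 = 4(1-y)/y^2$; and you work with the $\alpha_1$ formula (Proposition~\ref{rho-formula-lens-sp}) rather than the general $\alpha_k$ one, which is harmless only because $f'_k$ is a unit in $\RhG$ (Lemma~\ref{f_k-lem}), a fact you should state. In short: plausible program, interesting alternative perspective via binomial $2$-adic valuations, but the core computation that constitutes the proof is absent.
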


In view of Proposition \ref{rho-formula-lens-sp-k} it is convenient
to make the following reformulation. If $d = 2e$ then the group $T_N
(d)$ can be identified with the underlying abelian group $\ZZ_N
[x](d)$ of the truncated polynomial ring in the variable $x$:
\begin{align}
\begin{split} \label{T-N-ident-2e}
T_N (d) & \xra{\cong} \ZZ_N[x](d) := \{ q(x) \in \ZZ_N [x] \; | \;
\deg(q)
\leq c-1 \} \\
t = (t_{4i})_{i=1}^{c} & \mapsto q_t(x) = \sum_{i=0}^{c-1}
t_{4(i+1)} \cdot x^{c-i-1}.
\end{split}
\end{align}
The map $[\wrho]$ becomes
\begin{equation} \label{wrho-formula-2e}
q \mapsto 8 \cdot f'_k \cdot (f^2 -1) \cdot q(f^2).
\end{equation}
If $d = 2e+1$ then the group $T_N (d)$ can be identified with the
underlying abelian group $\ZZ_N [x](d)$ of the truncated polynomial
ring in the variable $x$ as follows:
\begin{align}
\begin{split} \label{T-N-ident-2e+1}
T_N (d) & \xra{\cong} \ZZ_N[x](d) := \{ q(x) \in \ZZ_N [x] \; | \;
\deg(q) \leq c-1 \} \\
t = (t_{4i})_{i=1}^{c} & \mapsto q_t(x) = \sum_{i=1}^{c-1}
t_{4i} \cdot x^{c-i-1}(x-1) + t_{4c} \\
& \quad\quad\quad\; = \sum_{i=1}^{c-1} (t_{4(i+1)}-t_{4i}) x^{c-i-1}
+ t_{4}x^{c-1}
\end{split}
\end{align}
The map $[\wrho]$ then becomes
\begin{equation} \label{wrho-formula-2e+1}
q \mapsto 8 \cdot f_k \cdot q(f^2).
\end{equation}
Further it is convenient to work with the underlying abelian group
of
\[
\ZZ[x](d) := \{ q(x) \in \ZZ [x] \; | \; \deg(q) \leq c-1 \},
\]
use the formulas (\ref{wrho-formula-2e}), (\ref{wrho-formula-2e+1})
to define a map $[\widehat \rho] \colon \ZZ[x](d) \lra \QQ
\RhG^{(-1)^d}$ and study the preimage of $4 \cdot \RhG^{(-1)^d}$. So
the task becomes to find
\begin{align*}
A^k_K(2e) & :=  \left\lbrace q \in \ZZ[x] \mid \deg(q) \leq c-1, \;
8 \cdot f'_k
\cdot (f^2-1) \cdot q(f^2) \in 4 \cdot \ZZ[\chi]/\langle I \langle K \rangle \right\rbrace,\\
A^k_K(2e+1) & := \left\lbrace q \in \ZZ[x] \mid \deg(q) \leq c-1, \;
8 \cdot f_k \cdot q(f^2) \in 4 \cdot \ZZ[\chi]/ I \langle K \rangle
\right\rbrace.
\end{align*}
Here we have replaced $4 \cdot \RhG^{\pm}$ by the bigger $4 \cdot \ZZ[\chi] / \langle K \rangle$. This is legal since when the expressions in question are in $4 \cdot \ZZ[\chi]/I \langle K \rangle$ then they always fulfil the additional conditions to be elements of $4 \cdot \RhG^{\pm}$. We will show that $A^k_K(d) = B_K(d)$ where $B_K(d)$ is a subgroup of polynomials described in terms of certain polynomials
$r^\pm_n(x)$ of degree $n$ for all $n \in \NN$.  These are the best
polynomial of degree $n$ in a sense that
\begin{align*}
8 \cdot f'_k \cdot (f^2-1) \cdot r^+_n(f^2) & \in 4 \cdot \ZZ[\chi]/I \langle 2n+2 \rangle \\
8 \cdot f_k \cdot r^-_n(f^2) & \in 4 \cdot \ZZ[\chi]/I \langle 2n+2
\rangle
\end{align*}
and for all polynomials $q \in \ZZ[x]$ of degree $n$ with leading
coefficient 1 we have
\begin{align*}
8 \cdot f'_k \cdot (f^2-1) \cdot q(f^2) & \notin 4 \cdot \ZZ[\chi]/I \langle 2n+3 \rangle \\
8 \cdot f_k \cdot q(f^2) & \notin 4 \cdot \ZZ[\chi]/ I \langle 2n+3
\rangle.
\end{align*}
We define
\begin{align*}
& B_K(2e) := \left\lbrace \sum_{n=0}^{c-1} a_n \cdot 2^{\max\{K-2n-2,0\}} \cdot r^+_n \mid a_n \in \ZZ \right\rbrace,\\
& B_K(2e+1) := \left\lbrace \sum_{n=0}^{c-1} a_n \cdot
2^{\max\{K-2n-2,0\}} \cdot r^-_n \mid a_n \in \ZZ \right\rbrace.
\end{align*}

\begin{thm} \label{thm:A=B}
\[ A^k_K(d) = B_K(d) \]
\end{thm}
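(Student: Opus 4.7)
The strategy is to prove both inclusions $B_K(d)\subseteq A^k_K(d)$ and $A^k_K(d)\subseteq B_K(d)$ separately. In both directions the central tool is the telescoping factorization
\[
1+\chi+\cdots+\chi^{2^K-1} \;=\; \bigl(1+\chi+\cdots+\chi^{2^{K-1}-1}\bigr)\bigl(1+\chi^{2^{K-1}}\bigr),
\]
which yields the nested chain $I\langle K\rangle\subseteq I\langle K-1\rangle\subseteq\cdots$ and hence surjections $\ZZ[\chi]/I\langle K\rangle\twoheadrightarrow\ZZ[\chi]/I\langle K'\rangle$ for every $K'\leq K$.

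For $B_K(d)\subseteq A^k_K(d)$ it suffices by $\ZZ$-linearity to show that each generator $2^{\max\{K-2n-2,0\}}r_n^{\pm}$ lies in $A^k_K(d)$. When $K\leq 2n+2$ this is immediate: the surjection $\ZZ[\chi]/I\langle 2n+2\rangle\twoheadrightarrow\ZZ[\chi]/I\langle K\rangle$ carries $4\cdot\ZZ[\chi]/I\langle 2n+2\rangle$ onto $4\cdot\ZZ[\chi]/I\langle K\rangle$, so the defining property of $r_n^{\pm}$ applies directly. When $K>2n+2$ one must additionally exhibit $2^{K-2n-2}$-torsion of the class of $[\widehat\rho](r_n^{\pm})$ in $\ZZ[\chi]/(4\cdot\ZZ[\chi]+I\langle K\rangle)$. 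The key input is the congruence
\[
2\cdot\bigl(1+\chi+\cdots+\chi^{2^{K-1}-1}\bigr) \;\equiv\; -(\chi^{2^{K-1}}-1)\bigl(1+\chi+\cdots+\chi^{2^{K-1}-1}\bigr) \pmod{I\langle K\rangle},
\]
obtained from $1+\chi^{2^{K-1}} = 2+(\chi^{2^{K-1}}-1)$ in the factorization above. Iterating this from $I\langle 2n+2\rangle$ up to $I\langle K\rangle$ and combining with the denominator structure of $f^{2n}$ and $f_k$, which cancels the factors $\chi^{2^j}-1 = (\chi-1)\cdot(1+\chi+\cdots+\chi^{2^j-1})$ against $(1-\chi)$ and $(1-\chi^k)$, produces the claimed $2^{K-2n-2}$-torsion by induction on $K$.

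For $A^k_K(d)\subseteq B_K(d)$: since each $r_n^{\pm}$ has degree $n$ with leading coefficient $1$, the family $\{r_n^{\pm}\}_{n=0}^{c-1}$ is a $\ZZ$-basis of $\ZZ[x](d)$, and every $q\in A^k_K(d)$ has a unique expansion $q=\sum_{n=0}^{c-1}b_n r_n^{\pm}$ with $b_n\in\ZZ$. One must show $2^{\max\{K-2n-2,0\}}\mid b_n$ for every $n$. We proceed by descending induction on $n$: for the top index $n_0$, project $[\widehat\rho](q)$ from $\ZZ[\chi]/I\langle K\rangle$ onto the coarser quotient $\ZZ[\chi]/I\langle 2n_0+3\rangle$. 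The contributions $b_m[\widehat\rho](r_m^{\pm})$ with $m>n_0$ land in $4\cdot\ZZ[\chi]/I\langle 2n_0+3\rangle$ automatically (since $2m+2\geq 2n_0+3$), so the condition that the total lies in $4\cdot\ZZ[\chi]/I\langle K\rangle$ projects to a constraint involving only $\{b_n\}_{n\leq n_0}$. The \emph{best} property of $r_{n_0}^{\pm}$, together with the torsion order computed in the first direction, pins down the $2$-adic valuation of $b_{n_0}$; subtracting off this top term reduces the problem to a polynomial of strictly lower degree, and the inductive step closes.

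The main obstacle is the precise quantitative torsion computation required in the case $K>2n+2$ of the first direction: one must track, across the iterated doubling from $I\langle 2n+2\rangle$ up to $I\langle K\rangle$, exactly how the denominators $(1-\chi)^{2n}$ and $(1-\chi^k)$ implicit in $f^{2n}$ and $f_k$ interact with the cyclotomic factors $(1+\chi^{2^j})$, so as to produce the exact torsion $2^{K-2n-2}$ rather than merely some bounded power of $2$. This quantitative statement, once established, also governs the base case of the descending induction in the second direction.
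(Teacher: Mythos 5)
Your broad strategy matches the paper: prove both inclusions separately, use the quotient surjections $\ZZ[\chi]/I\langle M\rangle\twoheadrightarrow\ZZ[\chi]/I\langle K\rangle$ for $K\leq M$, and exploit the ``best'' polynomials $r_n^\pm$. Your observation that the case $K\leq 2n+2$ of $B_K(d)\subseteq A^k_K(d)$ follows immediately from the surjection is correct and is essentially what the paper does (it gets this for free from its Lemma~\ref{mult2} with $\max\{K-2n-2,0\}=0$). However, there are three genuine gaps.

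First, as you yourself flag, the case $K>2n+2$ of the first inclusion requires a precise statement of the form: if $g\in 4\cdot\ZZ[\chi]/I\langle M\rangle$ (and $\pr_M(g)\in 4\cdot\ZZ[\chi]/\langle 1+\chi^{2^M}\rangle$), then $2g\in 4\cdot\ZZ[\chi]/I\langle M+1\rangle$, which can then be iterated. This is exactly the paper's Lemma~\ref{mult2}, and its proof is not just the telescoping factorization: it is a Chinese-remainder style argument comparing representatives over $\ZZ[\chi]$ and reducing mod $2$, and it requires the auxiliary hypothesis on $\pr_M(g)$. The congruence you derive from $1+\chi^{2^{K-1}}=2+(\chi^{2^{K-1}}-1)$ is correct but is only a first ingredient; ``iterating this ... produces the claimed torsion by induction on $K$'' is a placeholder for a proof you have not given.

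Second, your direction $A^k_K(d)\subseteq B_K(d)$ via descending induction on the index $n$ does not close as stated. Projecting $[\widehat\rho](q)$ to the coarser ring $\ZZ[\chi]/I\langle 2n_0+3\rangle$ does kill the contributions of $r_m^\pm$ with $m>n_0$, but for $n_0$ the top index there \emph{are} no such $m$, so the projection retains contributions from all $n\leq n_0$ and does not isolate $b_{n_0}$; the ``best'' property alone does not pin down its $2$-adic valuation. The paper instead inducts on $K$: given $q\in A^{k,m}_K(2e+1)$, the inductive hypothesis places $q$ in $B_{K-1}(2e+1)$, one then argues by contradiction via the set $M$ of indices whose coefficients fail the required divisibility, multiplies by $(1-\chi)^{2\max(M)}$, and uses the refined properties of $r_n^-$ (Corollary~\ref{r_n-properties}) together with Lemma~\ref{mult2} to see that only the top index contributes nontrivially. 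Your sketch would need to be reorganized along those lines, or the isolation step made explicit with the $(1-\chi)$-multiplication trick, which you do not mention.

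Third, you write $r_n^\pm$ uniformly but never address the even case $d=2e$. In the paper the even case is not a notational variant; it requires the change of variables $\beta(q)(x)=(xq(x)-q(1))/(x-1)$ and the identity $(f^2-1)q(f^2)+q(0)=f^2\cdot\beta^{-1}(q)(f^2)$, together with $f'_k\in\RhG$, to reduce $A^k_K(2e)$ to $A^{k,2}_K(2e-1)$. This reduction is missing from your proposal.
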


\begin{proof}[Proof of Theorem \ref{T-N}]
It follows from Theorem \ref{thm:A=B} and the definition of $B_K(d)$
that $A_K^k(d)$ is a free abelian subgroup of $\ZZ[x](d)$ with a
basis given by polynomials $r_n^\pm$. Under the homomorphism
$\ZZ[x](d) \ra \ZZ_N[x](d)$ the subgroup $A_K^k (d)$ is mapped onto
a subgroup isomorphic to a direct sum as claimed by the theorem.
\end{proof}

\noindent\textbf{Scheme of the proof of Theorem \ref{thm:A=B}.}

The proof requires a formidable amount of machinery and special
constructions. For better orientation we offer the following scheme.

In subsection \ref{subsec:w-l} we develop some general methods for
deciding whether a given element $g \in \QQ[\chi] / I \langle K
\rangle$ is in $4 \cdot \ZZ[\chi] / I \langle K \rangle$ or not. We
use the Chinese remainder theorem and certain `valuation' functions
$w_l$ on $\QQ[\chi]/I \langle K \rangle$. These are effectively
calculable as is shown in Lemma \ref{calc-rules-w_l}. The criteria
for deciding whether $g$ is in $4 \cdot \ZZ[\chi] / I \langle K
\rangle$ or not using $w_l$ are presented in Theorem
\ref{w_l-theorem}. Later we apply the criteria to decide whether a
polynomial $q \in \ZZ[x](d)$ is in $A_K^k (d)$ or not, but there is
a problem that we do not obtain a necessary and sufficient condition
since the criteria do not apply to all $g \in  \QQ[\chi] / I \langle
K \rangle$. In the sequel we therefore have to combine the
$w_l$-technology with some ad hoc considerations.

In subsection \ref{subsec:good-polys} we construct a sequence of
polynomials in $\ZZ[x](d)$ for $d=2e+1$ which are good in a sense
that they have the leading coefficient $1$ and they yield elements
in $4\cdot \ZZ[\chi] / I \langle K \rangle$ for a large $K$ in
comparison with the other polynomials of the same degree with
leading coefficient $1$. First we construct auxiliary polynomials
$p_k$ in Definition \ref{p-k-defn}, which are used to define `good'
polynomials $q_n$ in Definition \ref{q-n-defn}. These are in turn be
used to define `the best' polynomials $r_n^-$ in Definition
\ref{r_n-defn}. This last definition is inductive, the crucial
inductive step is described in Proposition \ref{r_n-prop}. The
`goodness' properties are summarized in Corollary \ref{p_k-w_l}
($p_k$), in Proposition \ref{K(q-n)-prop} ($q_n$) and in Corollary
\ref{r_n-properties} ($r^-_n$).

The final subsection \ref{subsec:A=B} completes the proof of Theorem
\ref{thm:A=B}. This part treats first the case $d = 2e+1$ and
proceeds by induction on $K$. The proof in the case $d=2e$ is short
and proceeds by a reduction to the case $d=2e+1$.

\subsection{$w_l$ technology} \label{subsec:w-l}

\

\medskip

For given $g \in \QQ\RhG$ we want to decide whether $g \in 4 \cdot
\RhG$ or not using the homomorphisms $\pr_l: \QQ\RhG \cong \QQ[\chi]
/ I \langle K \rangle \twoheadrightarrow \QQ[\chi] / \langle
1+\chi^{2^l} \rangle$ for $0 \leq l \leq K-1$. Obviously, $g \in 4
\cdot \RhG$ implies $\pr_l(g) \in 4 \cdot \ZZ[\chi] / \langle
1+\chi^{2^l} \rangle$. Of more interest is the other direction. By
the Chinese remainder theorem $g$ is uniquely determined by the
elements $\pr_l(g)$ ($0 \leq l \leq K-1$). More precisely, we have

\begin{lem}\label{Chinese remainder-trick}
Let $g \in \QQ\RhG$. Then
\[
g = \sum_{l=0}^{K-1} 2^{l-K} \cdot g_l \cdot (1-\chi) \cdot
\prod_{\stackrel{0 \leq r \leq K-1}{r \neq l}} (1+\chi^{2^r})
\]
for any elements $g_l \in \QQ\RhG$ satisfying $\pr_l(g_l) =
\pr_l(g)$.
\end{lem}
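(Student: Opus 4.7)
The plan is to interpret the lemma as an explicit Chinese Remainder Theorem decomposition, with a deliberately chosen formula for the system of idempotents.

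First I would record the key factorization
\[
1 + \chi + \cdots + \chi^{N-1} \;=\; \prod_{r=0}^{K-1}\bigl(1+\chi^{2^r}\bigr) \quad \text{in } \ZZ[\chi],
\]
which follows by multiplying both sides by $1-\chi$ and telescoping $1-\chi^{2^{r+1}}=(1-\chi^{2^r})(1+\chi^{2^r})$ to obtain $1-\chi^N$ on each side. Over $\QQ$ the factors $1+\chi^{2^r}$ are the distinct cyclotomic polynomials $\Phi_{2^{r+1}}$, hence pairwise coprime, so the Chinese Remainder Theorem identifies
\[
\QQ\RhG \;=\; \QQ[\chi]/I\langle K \rangle \;\xra{\cong}\; \prod_{l=0}^{K-1} \QQ[\chi]/\langle 1+\chi^{2^l}\rangle,
\]
with the $l$-th coordinate equal to $\pr_l$. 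Any element of $\QQ\RhG$ is therefore determined by its images under the maps $\pr_l$.

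Next, since the proposed formula is $\QQ$-linear in the $g_l$, it suffices to check it on the indicator elements. I would set
\[
e_l \;:=\; 2^{l-K} \cdot (1-\chi) \cdot \prod_{\substack{0 \leq r \leq K-1 \\ r \neq l}} \bigl(1+\chi^{2^r}\bigr) \;\in\; \QQ\RhG,
\]
and verify that $\pr_m(e_l) = \delta_{lm}$. For $m \neq l$ the factor $1+\chi^{2^m}$ is present in the product defining $e_l$, so $\pr_m(e_l)=0$. The main computation is then to evaluate $\pr_l(e_l)$. Working modulo $1+\chi^{2^l}$, one has $\chi^{2^l} \equiv -1$, so $\chi^{2^r}\equiv 1$ and $1+\chi^{2^r}\equiv 2$ for each $r > l$, contributing $2^{K-1-l}$; for the remaining factors one uses the telescoping identity
\[
(1-\chi)\cdot\prod_{r=0}^{l-1}(1+\chi^{2^r}) \;=\; 1-\chi^{2^l} \;\equiv\; 2 \pmod{1+\chi^{2^l}}.
\]
Multiplying these contributions yields $2^{K-l}$, which the prefactor $2^{l-K}$ cancels, so $\pr_l(e_l)=1$.

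Finally, the right-hand side of the claim is $\sum_l e_l \cdot g_l$, and by the idempotent property just established, $\pr_m\bigl(\sum_l e_l \cdot g_l\bigr) = \pr_m(g_m) = \pr_m(g)$ for every $m$. Since the map $(\pr_0,\dots,\pr_{K-1})$ is injective by CRT, the asserted equality in $\QQ\RhG$ follows. The only mildly subtle step is the explicit evaluation of $\pr_l(e_l)$, where one must keep track of both the telescoping identity on the small factors $r<l$ and the doubling on the large factors $r>l$; the exponent $2^{l-K}$ is engineered precisely so that these two contributions together produce $1$.
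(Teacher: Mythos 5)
Your proof is correct and takes essentially the same approach as the paper's: reduce modulo each $1+\chi^{2^m}$ via the Chinese Remainder Theorem, use telescoping $(1-\chi)\prod_{r<m}(1+\chi^{2^r})=1-\chi^{2^m}\equiv 2$ and $1+\chi^{2^r}\equiv 2$ for $r>m$ to compute the residue, and observe the prefactor $2^{l-K}$ cancels the accumulated power of $2$. The only cosmetic difference is that you explicitly name the orthogonal idempotents $e_l$ and verify $\pr_m(e_l)=\delta_{lm}$ before invoking linearity, whereas the paper applies $\pr_m$ directly to the full sum; the underlying computation is identical.
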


If $\pr_l(g) \in 2^{2+K-l} \cdot \ZZ[\chi] / \langle 1+\chi^{2^l}
\rangle$ we can choose $g_l \in 2^{2+K-l} \cdot \RhG$ satisfying
$\pr_l(g_l) = \pr_l(g)$ and the lemma above shows $g \in 4 \cdot
\RhG$. Motivated by this observation we want to analyze whether
$\pr_l(g)$ lies in $2^m \cdot \ZZ[\chi] / \langle 1+\chi^{2^l}
\rangle$ for some integer $m$. For this purpose we will introduce
$w_l$-functions which are generalizations of the p-adic valuation
for $p=2$.

Before we do so, we give the proof of Lemma \ref{Chinese
remainder-trick} and consider an application (Lemma
\ref{injectivity-mult-f}).

\begin{proof}[Proof of Proposition \ref{Chinese remainder-trick}]
We have $\QQ\RhG \cong \QQ[\chi] / I \langle K \rangle$ where $I
\langle K \rangle$ was defined as $I \langle K \rangle := \langle
1+\chi+\cdots+\chi^{2^K-1} \rangle$. Notice that
$1+\chi+\cdots+\chi^{2^K-1} = \prod_{m=0}^{K-1} (1+\chi^{2^m})$.
Since the factors $1+\chi^{2^m}$ are mutually coprime in the
principal ideal domain $\QQ[\chi]$, it suffices to check the desired
equality under the epimorphism $\pr_m$ for $0 \leq m \leq K-1$. In
$\QQ[\chi] / \langle 1+\chi^{2^m} \rangle$ we obtain
\begin{align*}
& \pr_m \left( \sum_{l=0}^{K-1} 2^{l-K} \cdot g_l \cdot (1-\chi) \cdot \prod_{\stackrel{0 \leq r \leq K-1}{r \neq l}} (1+\chi^{2^r}) \right) & =\\
& \sum_{l=0}^{K-1} 2^{l-K} \cdot \pr_m(g) \cdot (1-\chi) \cdot \prod_{\stackrel{0 \leq r \leq K-1}{r \neq l}} (1+\chi^{2^r}) & =\\
& 2^{m-K} \cdot \pr_m(g) \cdot (1-\chi) \cdot \prod_{\stackrel{0 \leq r \leq K-1}{r \neq m}} (1+\chi^{2^r}) & =\\
& 2^{m-K} \cdot \pr_m(g) \cdot (1-\chi) \cdot (1+\chi+\cdots+\chi^{2^m-1}) \cdot \prod_{r=m+1}^{K-1} (1+(-1)^{2^{r-m}}) & =\\
& 2^{m-K} \cdot \pr_m(g) \cdot (1-\chi^{2^m}) \cdot 2^{K-1-m} \qquad
= \qquad \pr_m(g). &
\end{align*}
\end{proof}

As a warm-up in learning how to work with Lemma \ref{Chinese
remainder-trick} we prove the following lemma needed in the proof of
Proposition \ref{rho-homomorphism}.

\begin{lem} \label{injectivity-mult-f}
Let $z \in \QQ \RhG^-$. If $f \cdot z = 0$ in $\QQ \RhG^+$ then $z =
0$.
\end{lem}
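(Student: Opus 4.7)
The plan is to exploit the Chinese remainder decomposition $\QQ\RhG \cong \prod_{m=0}^{K-1}\QQ[\chi]/\langle 1+\chi^{2^m}\rangle$ referred to in Lemma \ref{Chinese remainder-trick}, together with the fact that the $(-1)$-eigenspace condition kills the one ``bad'' factor that $(1+\chi)$ does not control.

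First, since the proof of Lemma \ref{f_k-lem} produces an explicit inverse of $(1-\chi)$ in $\QQ\RhG$, the element $f = (1+\chi)/(1-\chi)$ is well-defined and $f\cdot z = 0$ is equivalent to $(1+\chi)\cdot z = 0$ in $\QQ\RhG$. So the task reduces to showing: if $z\in \QQ\RhG^-$ and $(1+\chi)z = 0$, then $z = 0$.

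Next, I would apply $\pr_m\co \QQ\RhG\twoheadrightarrow \QQ[\chi]/\langle 1+\chi^{2^m}\rangle$ for each $0\le m\le K-1$. The factors $1+\chi^{2^m}$ are pairwise coprime in $\QQ[\chi]$, and in particular, for $m\ge 1$ the polynomial $1+\chi$ is coprime to $1+\chi^{2^m}$, hence invertible modulo $1+\chi^{2^m}$. From $\pr_m((1+\chi)z) = 0$ one then concludes $\pr_m(z) = 0$ for every $m\ge 1$.

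The remaining (and only delicate) step is the factor $m=0$: here $\QQ[\chi]/\langle 1+\chi\rangle \cong \QQ$ via $\chi\mapsto -1$, so the relation $(1+\chi)z = 0$ conveys no information about $\pr_0(z)$. This is where the hypothesis $z\in\QQ\RhG^-$ enters. Under the involution $\chi\mapsto \chi^{-1}$, which acts on $\QQ\RhG^-$ as multiplication by $-1$, the image in the $m=0$ factor is fixed because $-1$ is its own inverse. Thus $\pr_0(z) = -\pr_0(z)$, forcing $\pr_0(z) = 0$. Combined with the previous step and the Chinese remainder isomorphism, this gives $z = 0$. The main obstacle is conceptual rather than computational: one must recognize that the $m=0$ factor, which is invisible to multiplication by $(1+\chi)$, is precisely the factor annihilated by the $(-1)$-eigenspace hypothesis.
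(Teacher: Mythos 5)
Your proof is correct and follows essentially the same route as the paper: reduce to showing $\pr_m(z)=0$ for each $m$ via the Chinese remainder decomposition, observe that multiplication by $f$ (equivalently by $1+\chi$) is invertible in each factor with $m\ge 1$, and then handle the $m=0$ factor using the $(-1)$-eigenspace hypothesis. The only cosmetic difference is in that last step: the paper writes $z=\sum a_r(\chi^r-\chi^{N-r})$ and notes each basis element is divisible by $1+\chi$, whereas you phrase it as an equivariance argument ($\pr_0$ intertwines the source involution, which is $-1$ on $z$, with the trivial involution on $\QQ[\chi]/\langle 1+\chi\rangle$), forcing $\pr_0(z)=-\pr_0(z)=0$; these are the same observation.
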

\begin{proof}
It follows from Lemma \ref{Chinese remainder-trick} that it is
sufficient to show $\pr_l(z) = 0$ for all $0 \leq l \leq K-1$. We
have $\pr_l(f) \cdot pr_l(z) = pr_l(f \cdot z) = 0$. Notice that
$\pr_l(f)$ is invertible for $l \geq 1$ since
\[
(1+\chi)^{-1} = \frac{1}{2} \cdot (1-\chi+\chi^2-\chi^3+ \cdots
-\chi^{2^l-1}) \in \QQ[\chi]/\langle 1 + \chi^{2^l} \rangle.
\]
This implies $pr_l(z)=0$ for $l \geq 1$. Further recall that we can
write $z \in \QQ \RhG^-$ as
\[
z = \sum_{r=1}^{N/2-1} a_r \cdot (\chi^r - \chi^{N-r}).
\]
with $a_r \in \QQ$. Since $\chi^r - \chi^{N-r}$ is a multiple of $1
+ \chi$, we conclude $\pr_0 (z) = 0$.
\end{proof}

The following lemma is needed for the definition of the
$w_l$-functions.

\begin{lem}\label{preparation-w_l}
Let $g \in \QQ\RhG$ and $l \geq 0$ such that $(1+\chi^{2^l}) \nmid
g$.
\begin{enumerate}
 \item There exist $a, u \in \ZZ$, $b \in \{0,1,\cdots,2^l-1\}$ and $v_1, v_2 \in \ZZ[\chi]$ such that
\begin{itemize}
 \item $\pr_l(g)$ and $\frac{2^a}{u} \cdot \left( (1-\chi)^b \cdot v_1(\chi) + 2 \cdot v_2(\chi) \right)$
coincide in $\QQ[\chi] / \langle 1+\chi^{2^l} \rangle$,
\item $u$ and $v_1(1)$ are odd.
\end{itemize}
Moreover, if $\pr_l(g)\in \ZZ[\chi] / \langle 1+\chi^{2^l} \rangle$
then $u$ can be chosen to be 1.
 \item The numbers $a$ and $b$ are uniquely determined by $g$ and $l$.
\end{enumerate}
\end{lem}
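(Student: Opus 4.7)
The plan is to identify $\QQ[\chi]/\langle 1+\chi^{2^l}\rangle$ with the cyclotomic field $\QQ(\zeta_{2^{l+1}})$ (via $\chi\mapsto\zeta_{2^{l+1}}$, since $1+\chi^{2^l}=\Phi_{2^{l+1}}(\chi)$) and to use the local structure of $\ZZ[\zeta_{2^{l+1}}]$ at the unique prime $\mathfrak{p}$ above $2$. Here $\pi:=1-\chi$ is a uniformizer: the factorisation $\Phi_{2^{l+1}}(1)=2=(\text{unit})\cdot\pi^{2^l}$ gives the ramification $v_\pi(2)=2^l$, and the residue field is $\mathbb{F}_2$, so $\mathbb{F}_2[\chi]/\langle 1+\chi^{2^l}\rangle\cong\mathbb{F}_2[\pi]/\langle\pi^{2^l}\rangle$ with $\mathbb{F}_2$-basis $\{1,\pi,\dots,\pi^{2^l-1}\}$.

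For existence, I will first write $\pr_l(g)=p/m$ with $p\in\ZZ[\chi]$ and $m\in\ZZ_{>0}$, factor $p=2^\alpha p'$ with $p'\in\ZZ[\chi]$ not $2$-divisible and $m=2^c u$ with $u$ odd, and set $a:=\alpha-c$, $\tilde w:=p'$, so that $\pr_l(g)=(2^a/u)\tilde w$ with $\tilde w\in\ZZ[\zeta_{2^{l+1}}]\setminus 2\ZZ[\zeta_{2^{l+1}}]$. When $\pr_l(g)$ already lies in $\ZZ[\chi]/\langle 1+\chi^{2^l}\rangle$ the same recipe applies with $m=1$, yielding $u=1$ and thereby the moreover clause. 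Next I reduce $\tilde w$ modulo $2$ in $\mathbb{F}_2[\pi]/\langle\pi^{2^l}\rangle$; the assumption $(1+\chi^{2^l})\nmid g$ makes this non-zero, and I let $b\in\{0,\dots,2^l-1\}$ be the smallest index with non-zero coefficient. Lifting the resulting $\mathbb{F}_2$-expansion to $\ZZ[\chi]$ produces $v_1(\chi)=\sum_{i\geq 0}a_i(1-\chi)^i$ with $a_0$ odd, together with $v_2(\chi)\in\ZZ[\chi]$, such that $\tilde w=(1-\chi)^b v_1(\chi)+2v_2(\chi)$ modulo $\langle 1+\chi^{2^l}\rangle$. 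Evaluating gives $v_1(1)=a_0$, which is odd.

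For uniqueness, since $v_1(1)$ odd makes $v_1$ a $\pi$-adic unit, $v_\pi\bigl((1-\chi)^b v_1\bigr)=b<2^l\leq v_\pi(2v_2)$, hence $v_\pi\bigl((1-\chi)^b v_1+2v_2\bigr)=b$. Combined with $v_\pi(2^a/u)=a\cdot 2^l$ (because $u$ is odd), the representation forces $v_\pi(\pr_l(g))=a\cdot 2^l+b$ with $0\leq b<2^l$. The Euclidean division of $v_\pi(\pr_l(g))$ by $2^l$ therefore pins down both $a$ and $b$.

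The step I expect to be the main obstacle is the bookkeeping in existence: the sign and size of $a$ depend on the representative $p/m$ because both $\alpha$ and $c$ vary with that choice, so it is not obvious a priori that the $a$ read off in Step~2 is the one demanded by the normal form. The uniqueness computation removes this worry, because both descriptions of $a$ collapse to the intrinsic invariant $v_\pi(\pr_l(g))$, which does not depend on any representative.
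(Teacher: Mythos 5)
Your overall strategy---transporting everything into $\ZZ[\zeta_{2^{l+1}}]$ and reading off $a$ and $b$ from the $\pi$-adic valuation $v_\pi$, where $\pi=1-\chi$ is a uniformizer with $v_\pi(2)=2^l$---is sound, and your uniqueness argument is a genuine simplification of the paper's: the paper runs two separate contradiction arguments in $\ZZ_2[\chi]$ (one for $a$, one for $b$), whereas you observe at once that any valid normal form forces $v_\pi(\pr_l(g))=a\cdot 2^l+b$ with $0\leq b<2^l$, so $a$ and $b$ are the quotient and remainder of $v_\pi(\pr_l(g))$ by $2^l$. The paper in fact endorses this viewpoint in a remark following Lemma \ref{calc-rules-w_l}, where it identifies $w_l$ with $\tfrac{1}{2^l}\nu_{\mathcal P}$, so you are making explicit use of a structure the authors only mention in passing.

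However, your existence step has a genuine gap. After writing $\pr_l(g)=p/m$ and factoring $p=2^\alpha p'$ with $p'\in\ZZ[\chi]$ not divisible by $2$ \emph{as a polynomial}, you assert that $\tilde w:=p'$ lies in $\ZZ[\zeta_{2^{l+1}}]\smallsetminus 2\ZZ[\zeta_{2^{l+1}}]$ and hence has nonzero image in $\mathbb{F}_2[\pi]/\langle\pi^{2^l}\rangle$. This does not follow: the reduction modulo $\langle 1+\chi^{2^l}\rangle$ can introduce new factors of $2$. For instance, take $g=3+\chi^{2^l}$ (legal once $K>l$), so $(1+\chi^{2^l})\nmid g$ and $\pr_l(g)=3+(-1)=2$. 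Here $p=3+\chi^{2^l}$, $m=1$, $\alpha=0$, $a=0$, $\tilde w=3+\chi^{2^l}$, yet $\tilde w\equiv 1+\chi^{2^l}\equiv 0$ in $\mathbb{F}_2[\chi]/\langle 1+\chi^{2^l}\rangle$, so your Step 5 fails and your $a$ (which should be $1$) is wrong. The fix is exactly the step the paper's proof takes but you skipped: after clearing denominators, first replace the integral representative by one of degree $<2^l$ (which is a $\ZZ$-basis representation in $\ZZ[\zeta_{2^{l+1}}]$) and only \emph{then} extract the maximal power of $2$. Your own ``main obstacle'' paragraph, which worries about $a$ depending on the representative $p/m$ and appeals to uniqueness to resolve it, does not cure this: uniqueness tells you that a valid normal form has a determined $a$, but it cannot rescue a construction that fails to produce a valid normal form in the first place.
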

\begin{proof}
(1) There exists $w \in \ZZ$ such that $w \cdot \pr_l(g) \in
\ZZ[\chi] / \langle 1+\chi^{2^l} \rangle$. We write $w = 2^{a_1}
\cdot u$ with $u$ odd. Choose $z \in \ZZ[\chi]$ of degree $\deg(z) <
2^l$ such that $w \cdot \pr_l(g)$ and $z(\chi)$ coincide in
$\QQ[\chi] / \langle 1+\chi^{2^l} \rangle$. We write $z$ as $z(\chi)
= \sum_{m=0}^{2^l-1} z_m \cdot (1-\chi)^m$ with $z_m \in \ZZ$. Since
$(1+\chi^{2^l})$ does not divide $g$, we have $z \neq 0$. Hence
there exists a largest number $a_2 \in \NN_0$ such that
$\frac{z}{2^{a_2}} \in \ZZ[\chi]$. Define $a:=-a_1+a_2$,
$b:=\min\left\lbrace m \; \big\vert \; 2 \nmid
\frac{z_m}{2^{a_2}}\right\rbrace$, $v_1(\chi):=\sum_{m=b}^{2^l-1}
\frac{z_m}{2^{a_2}} \cdot (1-\chi)^{m-b}$ and $v_2(\chi) :=
\sum_{m=0}^{b-1} \frac{z_m}{2^{a_2+1}} \cdot (1-\chi)^m$. Then
$v_1(1) = \frac{z_b}{2^{a_2}}$ is odd and $\pr_l(g)$ coincides with
\[
\frac{2^a}{u} \cdot \left( (1-\chi)^b \cdot v_1(\chi) + 2 \cdot
v_2(\chi) \right)
\]
in $\QQ[\chi] / \langle 1+\chi^{2^l} \rangle$.

(2) Let $a, a', u, u' \in \ZZ$, $b, b' \in \{0,1,\cdots,l-1\}$ and
$v_1, v'_1, v_2, v'_2 \in \ZZ[\chi]$ such that $u, u', v_1(1),
v'_1(1)$ are odd and
\begin{align*}
\pr_l(g) & = \frac{2^a}{u} \cdot \left( (1-\chi)^b \cdot v_1(\chi) + 2 \cdot v_2(\chi) \right)\\
& = \frac{2^{a'}}{u'} \cdot \left( (1-\chi)^{b'} \cdot v'_1(\chi) +
2 \cdot v'_2(\chi) \right)
\end{align*}
in $\QQ[\chi] / \langle 1+\chi^{2^l} \rangle$.

We first prove $a = a'$ by contradiction. Assume that $a < a'$. In
$\QQ[\chi] / \langle 1+\chi^{2^l} \rangle$ we get
\[
u' \cdot \left( (1-\chi)^b \cdot v_1(\chi) + 2 \cdot v_2(\chi)
\right) = 2^{a'-a} \cdot u \cdot \left( (1-\chi)^{b'} \cdot
v'_1(\chi) + 2 \cdot v'_2(\chi) \right).
\]
Hence there exists $q \in \QQ[\chi]$ with
\begin{align*}
& u' \cdot \left( (1-\chi)^b \cdot v_1(\chi) + 2 \cdot v_2(\chi) \right) =\\
& 2^{a'-a} \cdot u \cdot \left( (1-\chi)^{b'} \cdot v'_1(\chi) + 2
\cdot v'_2(\chi) \right) + q(\chi) \cdot (1+\chi^{2^l}).
\end{align*}
The equation above implies $q(\chi) \cdot (1+\chi^{2^l}) \in
\ZZ[\chi]$ and hence $q \in \ZZ[\chi]$. Under the epimorphism
$\ZZ[\chi] \twoheadrightarrow \ZZ_2[\chi]$ this equation becomes to
\[
(1+\chi)^b \cdot \overline{v_1}(\chi) = \overline{q}(\chi) \cdot
(1+\chi^{2^l}).
\]
Since $1+\chi^{2^l} = (1+\chi)^{2^l}$ in $\ZZ_2[\chi]$ (proof by
induction), we conclude $\overline{v_1}(\chi) = \overline{q}(\chi)
\cdot (1+\chi)^{2^l-b}$ and hence $\overline{v_1}(1) = 0$ in
$\ZZ_2[\chi]$. This is a contradiction to $v_1(1)$ odd. Therefore,
we have $a = a'$.

It remains to prove $b = b'$. We give again a proof by
contradiction. Assume that $b < b'$. In $\QQ[\chi] / \langle
1+\chi^{2^l} \rangle$ we have
\[
u' \cdot \left( (1-\chi)^b \cdot v_1(\chi) + 2 \cdot v_2(\chi)
\right) = u \cdot \left( (1-\chi)^{b'} \cdot v'_1(\chi) + 2 \cdot
v'_2(\chi) \right).
\]
Hence there exists $q \in \QQ[\chi]$ with
\begin{align*}
& u' \cdot \left( (1-\chi)^b \cdot v_1(\chi) + 2 \cdot v_2(\chi) \right) =\\
& u \cdot \left( (1-\chi)^{b'} \cdot v'_1(\chi) + 2 \cdot v'_2(\chi)
\right) + q(\chi) \cdot (1+\chi^{2^l}).
\end{align*}
We conclude $q \in \ZZ[\chi]$. Under the epimorphism $\ZZ[\chi]
\twoheadrightarrow \ZZ_2[\chi]$ this equation becomes to
\[
(1+\chi)^b \cdot \overline{v_1}(\chi) = (1+\chi)^{b'} \cdot
\overline{v'_1}(\chi) + \overline{q}(\chi) \cdot (1+\chi^{2^l}).
\]
We finally get $\overline{v_1}(\chi) = (1+\chi)^{b'-b} \cdot
\overline{v'_1}(\chi) + \overline{q}(\chi) \cdot (1+\chi)^{2^l-b}$
and hence $\overline{v_1}(1) = 0$ in $\ZZ_2[\chi]$ contradicting
$v_1(1)$ odd. This shows $b = b'$.
\end{proof}

\begin{defn}\label{def-w_l}
For $l \geq 0$ define
\begin{align*}
w_l & \colon \QQ\RhG \lra \frac{1}{2^l} \cdot \ZZ \cup \{\infty\} \\
& g \mapsto
\begin{cases} a+b/2^l \quad \mathrm{with \;} a, b \mathrm{\; as \; in \; Lemma \; \ref{preparation-w_l} \; when \;} (1+\chi^{2^l}) \nmid g\\
\infty \quad \mathrm{when \;} (1+\chi^{2^l}) \mid g
\end{cases}
\end{align*}
\end{defn}

Roughly speaking, $w_l$ counts how many factors of $2$ are contained
in $\pr_l(g)$. We have the following calculation rules.

\begin{lem}\label{calc-rules-w_l}
Let $g_1, g_2 \in \QQ\RhG$ and $l \geq 0$.
\begin{enumerate}
 \item $w_l(g_1 \cdot g_2) = w_l(g_1) + w_l(g_2)$.
 \item If $w_l(g_1) \neq w_l(g_2)$ then $w_l(g_1 + g_2) = \min\left\lbrace  w_l(g_1), w_l(g_2) \right\rbrace$.
 \item If $w_l(g_1) = w_l(g_2)$ then $w_l(g_1 + g_2) > w_l(g_1) = w_l(g_2)$.
\end{enumerate}
\end{lem}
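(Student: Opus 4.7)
The plan is to recognize that $w_l$ is nothing but a normalized $\pi$-adic valuation in disguise, and then derive all three assertions from standard valuation theory. Concretely, set $\OO := \ZZ[\chi]/\langle 1+\chi^{2^l} \rangle$ and $K := \QQ[\chi]/\langle 1+\chi^{2^l}\rangle$; under the identification $\chi = \zeta_{2^{l+1}}$ this is the ring of integers of the $(2^{l+1})$-th cyclotomic field $K = \QQ(\zeta_{2^{l+1}})$, in which $\pi := 1-\chi$ generates the unique prime above $2$, with ramification index $2^l$ and residue field $\OO/\pi\OO \cong \FF_2$. I denote by $v_\pi \co K^\times \ra \ZZ$ the corresponding discrete valuation extended to $K$, with $v_\pi(\infty) = \infty$ by convention.

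The first task is to verify the identity $(1-\chi)^{2^l} = 2\cdot r(\chi)$ in $\OO$ for some $r \in \ZZ[\chi]$ with $r(1)$ odd (equivalently, $r$ a unit in $\OO$). This is a direct binomial computation using $\chi^{2^l} \equiv -1$: the terms $k=0$ and $k=2^l$ cancel, and for $1 \leq k \leq 2^l-1$ the coefficients $\binom{2^l}{k}$ are all even, so dividing out the common factor of $2$ yields $r(\chi) \in \ZZ[\chi]$; evaluating at $\chi=1$ and using $\sum_{k=0}^{2^l}(-1)^k\binom{2^l}{k}=0$ gives $r(1)=-1$. In particular, $2\OO = \pi^{2^l}\OO$ as ideals, so $v_\pi(2) = 2^l$.

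Given the canonical form $\pr_l(g) = \frac{2^a}{u}\bigl((1-\chi)^b v_1(\chi) + 2 v_2(\chi)\bigr)$ from Lemma \ref{preparation-w_l}, I then compute $v_\pi$ of each piece. Since $v_1(1)$ is odd, $v_1$ reduces to a nonzero element of $\OO/\pi \cong \FF_2$, hence is a $\pi$-adic unit; thus $v_\pi\bigl((1-\chi)^b v_1\bigr) = b$. On the other hand $v_\pi(2 v_2) \geq 2^l > b$, so the sum has valuation exactly $b$. Combined with $v_\pi(2^a/u) = a\cdot 2^l$ (as $u$ is an odd integer), this yields $v_\pi(\pr_l(g)) = a\cdot 2^l + b$, so that
\[
w_l(g) = a + b/2^l = v_\pi(\pr_l(g))/2^l.
\]

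With this identification, all three claims reduce to standard valuation theory applied to $\pr_l$. Part (1) follows at once from the multiplicativity $v_\pi(\pr_l(g_1)\pr_l(g_2)) = v_\pi(\pr_l(g_1)) + v_\pi(\pr_l(g_2))$. Part (2) is the sharp ultrametric inequality: when $w_l(g_1) \neq w_l(g_2)$, one has $v_\pi(\pr_l(g_1) + \pr_l(g_2)) = \min(v_\pi(\pr_l(g_1)),v_\pi(\pr_l(g_2)))$. Part (3) is where the residue field being $\FF_2$ becomes essential: if $v_\pi(\pr_l(g_1)) = v_\pi(\pr_l(g_2)) = n$, then $\pr_l(g_i)/\pi^n$ both have nonzero image in $\OO/\pi \cong \FF_2$, which forces both images to equal $1$; their sum is $1+1 = 0$ in $\FF_2$, so $v_\pi(\pr_l(g_1)+\pr_l(g_2)) > n$, giving strictly greater $w_l$.

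The main obstacle, and the only nontrivial content, is the identification in Step 1–2 — in particular the calculation $(1-\chi)^{2^l} = 2r$ with $r$ a unit in $\OO$, which is what makes the normalization $w_l = v_\pi/2^l$ land in $\frac{1}{2^l}\ZZ$ rather than in $\ZZ$. After this, the three properties are formal consequences of the fact that $\pr_l$ is a ring homomorphism into a discrete valuation ring whose residue field happens to be $\FF_2$.
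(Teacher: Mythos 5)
Your proof is correct, and it takes a genuinely different route from the paper's. The paper establishes part (1) by brute force: it multiplies out the canonical forms from Lemma \ref{preparation-w_l}, splits into the cases $b_1 + b_2 < 2^l$ and $b_1 + b_2 \geq 2^l$, and in the second case invokes an inductively defined family $x_m(\chi)$ with $(1-\chi)^{2^k} = 2 x_k(\chi) + (1 + \chi^{2^k})$; parts (2) and (3) are left to the reader as ``easy calculations.'' You instead prove the valuation-theoretic identity $w_l(g) = \nu_{\mathcal P}(\pr_l(g))/2^l$, which the paper only states (without proof or use) in the Remark immediately following the lemma, and then read off all three parts from standard facts about a discrete valuation with residue field $\FF_2$. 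Both proofs pivot on the same essential fact, namely that $(1-\chi)^{2^l}$ is $2$ times a $\pi$-adic unit modulo $1+\chi^{2^l}$, but your organization is tighter: it handles (2) and (3) explicitly instead of waving them away, treats all three parts uniformly, and isolates exactly where the residue field being $\FF_2$ matters (part (3) would fail over a larger residue field). The cost is that your argument assumes familiarity with ramification in cyclotomic fields, which the paper's hands-on computation avoids --- that is presumably why the authors relegated the valuation description to a remark ``for the reader who is familiar with number fields.'' One cosmetic slip: ``$v_\pi(\infty) = \infty$'' should read $v_\pi(0) = \infty$, and the claim that $r(1)$ odd is ``equivalently, $r$ a unit in $\OO$'' is stronger than what you use (you only need $r$ to be a $\pi$-adic unit, i.e.\ $v_\pi(r)=0$) --- though in fact $r$ is a global unit here by a norm computation, so no harm is done.
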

\begin{proof}
Parts (2) and (3) can be proven by easy calculations. We only focus
on (1). As in Lemma \ref{preparation-w_l} we write (for $i=1,2$):
\[
\pr_l(g_i) = \frac{2^{a_i}}{u_i} \cdot \left( (1-\chi)^{b_i} \cdot
v_1^{(i)}(\chi) + 2 \cdot v_2^{(i)}(\chi) \right) \in \QQ[\chi] /
\langle 1+\chi^{2^l} \rangle.
\]
In $\QQ[\chi] / \langle 1+\chi^{2^l} \rangle$ we obtain
\begin{align*}
& \pr_l(g_1 \cdot g_2) = \pr_l(g_1) \cdot \pr_1(g_2) =\\
& \frac{2^{a_1+a_2}}{u_1 \cdot u_2} \cdot (1-\chi)^{b_1+b_2} \cdot v_1^{(1)}(\chi) \cdot v_1^{(2)}(\chi) +\\
& \frac{2^{a_1+a_2+1}}{u_1 \cdot u_2} \cdot \left( (1-\chi)^{b_1}
\cdot v_2^{(2)}(\chi) + (1-\chi)^{b_2} \cdot v_2^{(1)}(\chi) + 2
\cdot v_2^{(1)}(\chi) \cdot v_2^{(2)}(\chi) \right).
\end{align*}
If $b_1 + b_2 < 2^l$ then we conclude
\[
w_l(g_1 \cdot g_2) = a_1+a_2 + \frac{b_1+b_2}{2^l} = a_1 +
\frac{b_1}{2^l} + a_2 + \frac{b_2}{2^l} = w_l(g_1) + w_l(g_2).
\]

It remains to study the case $b_1 + b_2 \geq 2^l$. Define $x_m \in
\ZZ[\chi]$ by $x_0 := -\chi$ and $x_m := \chi^{2^{m-1}} + 2 \cdot
x_{m-1}(\chi) \cdot (1+\chi^{2^{m-1}} + x_{m-1}(\chi))$ for $m \geq
1$. A proof by induction shows $(1-\chi)^{2^k} = 2 \cdot x_k(\chi) +
(1+\chi^{2^k})$ for all $k \geq 0$. In $\QQ[\chi] / \langle
1+\chi^{2^l} \rangle$ we obtain
\begin{align*}
& \pr_l(g_1 \cdot g_2) = \pr_l(g_1) \cdot \pr_1(g_2) = \frac{2^{a_1+a_2+1}}{u_1 \cdot u_2} \cdot (1-\chi)^{b_1+b_2-2^l} \cdot\\
& \quad \left( x_l(\chi) \cdot v_1^{(1)}(\chi) \cdot v_1^{(2)}(\chi) + (1-\chi)^{2^l-b_2} \cdot v_2^{(2)}(\chi) + (1-\chi)^{2^l-b_1} \cdot v_2^{(1)}(\chi) \right) +\\
& \quad \frac{2^{a_1+a_2+2}}{u_1 \cdot u_2} \cdot v_2^{(1)}(\chi)
\cdot v_2^{(2)}(\chi).
\end{align*}
We finally conclude
\[
w_l(g_1 \cdot g_2) = a_1+a_2+1 + \frac{b_1+b_2-2^l}{2^l} = a_1 +
\frac{b_1}{2^l} + a_2 + \frac{b_2}{2^l} = w_l(g_1) + w_l(g_2).
\]
\end{proof}

\begin{rem}
For the reader who is familiar with number fields and valuations we
give the following description for the $w_l$-functions.

Let $\zeta_{2^{l+1}} \in \CC$ be a primitive $2^{l+1}$-th root of
unity. Consider the ring of algebraic integers
$\ZZ[\zeta_{2^{l+1}}]$ in the cyclotomic field
$\QQ(\zeta_{2^{l+1}})$. The ideal ${\mathcal P}
:=(2,1-\zeta_{2^{l+1}})$ in $\ZZ[\zeta_{2^{l+1}}]$ is a prime ideal
satisfying ${\mathcal P}^{2^l} = (2)$. Let $\nu_{\mathcal P}$ be the
(exponential) valuation with respect to this prime ideal ${\mathcal
P}$. Then the $w_l$-function is given by
\[
w_l(g) = \frac{1}{2^l} \cdot \nu_{\mathcal P} \left( \alpha \left(
\pr_l(g) \right) \right)
\]
where $\alpha: \QQ[\chi] / \langle 1+\chi^{2^l} \rangle \to
\QQ(\zeta_{2^{l+1}})$ is the isomorphism induced by $\chi \mapsto
\zeta_{2^{l+1}}$.
\end{rem}

\begin{expl} \label{w-l-expls}
The reader is invited to calculate the $w_l$ for the following
examples.
\begin{enumerate}
\item If $q \in \QQ \subset \QQ\RhG$ then $w_l(q)$ coincides with the p-adic valuation for $p=2$. In particular, $w_l(2^a)=a$.
\item $w_l(f) = \begin{cases} \infty \quad \mathrm{when \;} l=0\\ 0 \quad \mathrm{when \;} l \geq 1\end{cases}$\\
\item $w_l(f \pm 1) = 1-2^{-l}$
\item $w_l(f^2-1) = 2-2^{1-l}$
\item $w_l(f^2+1) = \begin{cases} 0 \quad \mathrm{when \;} l=0\\ \infty \quad \mathrm{when \;} l=1\\ 1 \quad \mathrm{when \;} l \geq 2\end{cases}$
\item $w_l(f'_k) = 0$
\end{enumerate}
Hints. (2): For $l=0$ use $f \equiv 0 \; \mod 1+\chi$. For $l \geq
1$ use $f \cdot (1-\chi) = 1+\chi$ and $1+\chi = 2-(1-\chi)$. (3):
Use $(1-\chi)(f+1) = 2$ and $(1-\chi)(f-1) = 2\chi$. (4): Use (3).
(5): For $l=1$ use $f^2+1 \equiv 0$ $\mod 1+\chi^2$. For $l \neq 1$
use $f^2+1 = (f^2-1)+2$ and (4). (6): Use $f'_k$, ${f'_k}^{-1} \in
\RhG$ and the fact $w_l (g) \geq 0$ when $g \in \RhG$.
\end{expl}

We now come back to the initial question of this subsection: For a
given $g \in \QQ\RhG$ we want to decide whether $g$ lies in $4 \cdot
\RhG$ or not. The following theorem can answer this question in many
cases.

\begin{thm}\label{w_l-theorem}
Let $g \in \QQ\RhG$. Suppose that $\pr_l(g) \in 4 \cdot \ZZ[\chi] /
\langle 1+\chi^{2^l} \rangle$ for all $0 \leq l \leq K-1$.
\begin{enumerate}
 \item If $w_l(g) \geq 2+K-l-2^{-l}$ for all $0 \leq l \leq K-1$ then $g \in 4 \cdot \RhG$.
 \item If there exist $h \in \RhG$ and $0 \leq l' \leq K-1$ such that
\begin{align*}
 & w_l(g) + w_l(h) \geq 2+K-l-2^{-l} \mathrm{\; for \; all \;} l \in \{0,1,\cdots,K-1\}-\{l'\} \mathrm{\; and}\\
 & w_{l'}(g) + w_{l'}(h) < 2+K-l'-2^{-l'}
\end{align*}
 then $g \notin 4 \cdot \RhG$.
\end{enumerate}
\end{thm}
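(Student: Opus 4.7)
Both parts hinge on Lemma \ref{Chinese remainder-trick}, which decomposes any $y \in \QQ\RhG$ as $\sum_{l=0}^{K-1} y_l \pi_l$, where $\pi_l := 2^{l-K}(1-\chi)\prod_{r \neq l}(1+\chi^{2^r})$ and $y_l$ is any lift of $\pr_l(y)$. Using the identity $(1-\chi)\prod_{r \neq l}(1+\chi^{2^r}) = (1-\chi^{2^l})\prod_{r > l}(1+\chi^{2^r})$, each of the $K-l$ factors $(1-\chi^{2^l}), (1+\chi^{2^{l+1}}), \ldots, (1+\chi^{2^{K-1}})$ becomes $2$ (up to a unit) under $\pr_l$, giving $w_l(\pi_l)=0$, while $\pr_{l''}(\pi_l)=0$ for $l'' \neq l$ (directly from the factors). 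Thus $\pi_l$ plays the role of an idempotent in $\QQ\RhG$ supported at level $l$, and its "integer part" $\tau_l := (1-\chi^{2^l})\prod_{r>l}(1+\chi^{2^r}) \in \ZZ[\chi]$ satisfies $\pr_l(\tau_l) = 2^{K-l}$.

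For Part (1), the plan is to exhibit $g' \in \RhG$ with $g = 4g'$. Since $\pr_l(g) \in 4\cdot \ZZ[\chi]/\langle 1+\chi^{2^l}\rangle$ by hypothesis, Lemma \ref{preparation-w_l}(1) with $u=1$ produces an integer lift $g_l := 2^a\bigl((1-\chi)^b v_1 + 2 v_2\bigr) \in \ZZ[\chi]$ of $\pr_l(g)$, with $a + b/2^l = w_l(g) \geq 2+K-l-2^{-l}$. I set $g' := \sum_l 2^{l-K-2} g_l (1-\chi)\prod_{r \neq l}(1+\chi^{2^r}) \in \QQ\RhG$. Direct valuation bookkeeping using the identity above together with the lower bound on $a$ and $b$ shows that each $l$-th summand $2^{l-K-2} g_l \tau_l$ is a well-defined element of $\RhG$: the $K-l$ factors of $2$ coming from $\tau_l$, combined with $2^a$ and $(1-\chi)^b$ (whose $w_l$-contribution is $b/2^l$), compensate for the prefactor $2^{l-K-2}$. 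Summing over $l$ and invoking Lemma \ref{Chinese remainder-trick} gives $g' \in \RhG$ with $4g' = g$.

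For Part (2), I argue by contradiction: assume $g = 4g'$ with $g' \in \RhG$. Then $gh = 4g'h$ with $g'h \in \RhG$, and Lemma \ref{calc-rules-w_l}(1) gives $w_l(g'h) = w_l(g)+w_l(h)-2 \geq K-l-2^{-l}$ for all $l \neq l'$. A variant of the Part (1) analysis, applied to $g'h$ rather than $g/4$, uses these high valuations at levels $l \neq l'$ together with the integrality of $g'h$ in $\RhG$ to force the matching bound $w_{l'}(g'h) \geq K-l'-2^{-l'}$ at the remaining level; translated back, this yields $w_{l'}(g)+w_{l'}(h) = 2+w_{l'}(g'h) \geq 2+K-l'-2^{-l'}$, contradicting the strict inequality in the hypothesis. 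The main technical obstacle lies precisely in this "forced valuation at $l'$" step of Part (2): while Part (1) is essentially direct valuation accounting once the lifts $g_l$ are chosen, Part (2) requires controlling the integrality condition defining $\RhG$ inside $\prod_l \ZZ[\zeta_{2^{l+1}}]$ to propagate valuation information from all $l \neq l'$ to $l'$, via the non-trivial congruences that cut out $\RhG$ from this maximal order.
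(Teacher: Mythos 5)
Your overall strategy — apply the Chinese remainder decomposition of Lemma \ref{Chinese remainder-trick}, estimate valuations summand by summand, and for (2) argue by contradiction from $g = 4g'$ — matches the paper's, but both parts leave genuine gaps at exactly the places where the real work happens.

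\textbf{Part (1).} Your claim that each individual summand $2^{l-K-2}g_l\tau_l$ lies in $\RhG$ "by direct valuation bookkeeping" does not follow from the stated bounds. The lift $g_l = 2^a\bigl((1-\chi)^b v_1 + 2v_2\bigr)$ has only $2^a$ genuine power-of-$2$ divisibility in $\ZZ[\chi]$; the factor $(1-\chi)^b$ contributes $b/2^l$ to $w_l$ but is not divisible by any power of $2$ as a polynomial. Likewise $\tau_l = (1-\chi)\prod_{r\neq l}(1+\chi^{2^r})$ is not divisible by $2^{K-l}$ in $\ZZ[\chi]$ — e.g.\ $(1-\chi)(1+\chi^2)$ is primitive — even though $\pr_l(\tau_l) = 2^{K-l}$. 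So the prefactor $2^{l-K-2}$ is not visibly compensated, and the passage from a fractional $w_l$-bound to actual divisibility in $\ZZ[\chi]/I\langle K\rangle$ is precisely the nontrivial point (the projections $\prod_l \ZZ[\chi]/\langle 1+\chi^{2^l}\rangle$ form a strictly larger order than $\RhG$, so integrality of projections is not sufficient). The paper resolves this with one clean move that your writeup misses: multiply $g$ by $(1-\chi)$ first. Since $w_l((1-\chi)g)\geq 2+K-l$ is now an integer, one can choose an integer lift of $(1-\chi)\pr_l(g)$ of the exact form $2^{2+K-l}z_l$ with $z_l\in\ZZ[\chi]$, feed that into the CRT formula applied to $(1-\chi)g$, watch the powers of $2$ cancel, and then divide out the common factor $(1-\chi)$ at the end. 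No per-summand integrality claim is needed.

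\textbf{Part (2).} You correctly identify that from $g=4g'$ one should study $g'h$ and try to force the missing bound $w_{l'}(g'h)\geq K-l'-2^{-l'}$, and you explicitly flag that step as "the main technical obstacle." As written, that step is unproven, so the argument does not close. The paper completes it concretely: it sets $a$ to be the minimal integer so that $a+w_{l'}(g)+w_{l'}(h)\geq 2+K-l'-2^{-l'}$ (with $a\geq 1$ by hypothesis), applies the same multiply-by-$(1-\chi)$-and-lift device to $(1-\chi)\cdot 2^a gh$, and then reduces the resulting polynomial identity in $\ZZ[\chi]$ modulo $2$. Because $a\geq 1$ and the lifts $z_l$ for $l\neq l'$ were chosen in $2\ZZ[\chi]$, the mod-$2$ identity isolates $\overline{z_{l'}}=-\overline q\cdot(1+\chi)^{2^{l'}}$, whence $w_{l'}(z_{l'})\geq 1$; plugging back gives a valuation improvement that contradicts the minimality of $a$. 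It is exactly this mod-$2$ reduction that "controls the integrality condition cutting out $\RhG$," and it is the piece your sketch replaces with a promissory note.

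\end{document}
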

\begin{proof}
(1) The assumption $w_l(g) \geq 2+K-l-2^{-l}$ implies $w_l((1-\chi)
\cdot g) \geq 2+K-l$. By Lemma \ref{preparation-w_l} and Definition
\ref{def-w_l} there exist $z_l \in \ZZ[\chi]$ such that
\[
(1-\chi) \cdot \pr_l(g) = 2^{2+K-l} \cdot z_l(\chi) \in \QQ[\chi] /
\langle 1+\chi^{2^l} \rangle
\]
for all $0 \leq l \leq K-1$. Using Lemma \ref{Chinese
remainder-trick} we conclude
\[
(1-\chi) \cdot g = \sum_{l=0}^{K-1} 2^{l-K} \cdot \left( 2^{2+K-l}
\cdot z_l(\chi) \right) \cdot (1-\chi) \cdot \prod_{\stackrel{0 \leq
r \leq K-1}{r \neq l}} (1+\chi^{2^r}) \mathrm{\quad in \;} \QQ\RhG
\]
and hence
\[
g = 4 \cdot \sum_{l=0}^{K-1}  \cdot z_l(\chi) \cdot
\prod_{\stackrel{0 \leq r \leq K-1}{r \neq l}} (1+\chi^{2^r}) \in 4
\cdot \RhG.
\]
(2) We give a proof by contradiction. Assume that $g \in 4 \cdot
\RhG$ and define
\[
a:=\min\left\lbrace m \in \ZZ \; \vert \; m + w_{l'}(g) + w_{l'}(h)
\geq 2+K-l'-2^{-l'} \right\rbrace.
\]
Notice that $a \geq 1$. We have
\[
w_l((1-\chi) \cdot 2^a \cdot g \cdot h) \geq 3+K-l \mathrm{\quad for
\; all \;} l \in \{0,1,\cdots,K-1\}-\{l'\}
\]
and
\[
w_{l'}((1-\chi) \cdot 2^a \cdot g \cdot h) \geq 2+K-l'.
\]
From Lemma \ref{preparation-w_l} and Definition \ref{def-w_l} we
conclude that there exist $z_l \in 2 \cdot \ZZ[\chi]$ for all $l \in
\{0,1,\cdots,K-1\}-\{l'\}$ and $z_{l'} \in \ZZ[\chi]$ satisfying
\[
\pr_l((1-\chi) \cdot 2^a \cdot g \cdot h) = 2^{2+K-l} \cdot
z_l(\chi) \in \QQ[\chi] / \langle 1+\chi^{2^l} \rangle.
\]
Lemma \ref{Chinese remainder-trick} implies
\[
(1-\chi) \cdot 2^a \cdot g \cdot h = \sum_{l=0}^{K-1} 2^{l-K} \cdot
2^{2+K-l} \cdot z_l(\chi) \cdot (1-\chi) \cdot \prod_{\stackrel{0
\leq r \leq K-1}{r \neq l}} (1+\chi^{2^r})
\]
and hence
\[
2^a \cdot g \cdot h = \sum_{l=0}^{K-1} 4 \cdot z_l(\chi) \cdot
\prod_{\stackrel{0 \leq r \leq K-1}{r \neq l}} (1+\chi^{2^r})
\mathrm{\quad in \; \QQ\RhG}.
\]
Since $g \cdot h \in 4 \cdot \RhG$ there exists $y \in \ZZ[\chi]$
such that $g \cdot h$ and $4 \cdot y$ coincide in $\QQ[\chi] / I
\langle K \rangle$. We get
\[
2^a \cdot y (\chi) = \sum_{l=0}^{K-1} z_l(\chi) \cdot
\prod_{\stackrel{0 \leq r \leq K-1}{r \neq l}} (1+\chi^{2^r})
\mathrm{\quad in \;} \QQ[\chi] / I \langle K \rangle.
\]
Hence there exists $q \in \QQ[\chi]$ with
\[
2^a \cdot y (\chi) = \sum_{l=0}^{K-1} z_l(\chi) \cdot
\prod_{\stackrel{0 \leq r \leq K-1}{r \neq l}} (1+\chi^{2^r}) +
q(\chi) \cdot (1+\chi+\cdots+\chi^{N-1}).
\]
The equation above implies $q(\chi) \cdot (1+\chi+\cdots+\chi^{N-1})
\in \ZZ[\chi]$ and hence $q \in \ZZ[\chi]$. Under the epimorphism
$\ZZ[\chi] \twoheadrightarrow \ZZ_2[\chi]$ this equation becomes
\begin{align*}
0 = & \sum_{l=0}^{K-1} \overline{z_l}(\chi) \cdot \prod_{\stackrel{0 \leq r \leq K-1}{r \neq l}} (1+\chi^{2^r}) + \overline{q}(\chi) \cdot (1+\chi+\cdots+\chi^{N-1})\\
= & \overline{z_{l'}}(\chi) \cdot \prod_{\stackrel{0 \leq r \leq
K-1}{r \neq l'}} (1+\chi^{2^r}) + \overline{q}(\chi) \cdot
\prod_{r=0}^{K-1} (1+\chi^{2^r}).
\end{align*}
Hence $\overline{z_{l'}}(\chi) = - \overline{q}(\chi) \cdot
(1+\chi^{2^{l'}}) = - \overline{q}(\chi) \cdot (1+\chi)^{2^{l'}}$ in
$\ZZ_2[\chi]$. This implies $w_{l'}(z_{l'}) \geq 1$. We finally get
\begin{align*}
& (a-1) + w_{l'}(g) + w_{l'}(h) & =\\
& w_{l'}((1-\chi) \cdot 2^a \cdot g \cdot h) - 1 - 2^{-l'} & =\\
& w_{l'}(2^{2+K-l'} \cdot z_l'(\chi)) - 1 - 2^{-l'} & =\\
& w_{l'}(z_l'(\chi)) + 1 + K - l' - 2^{-l'} & \geq\\
& 2 + K - l' - 2^{-l'}
\end{align*}
which contradicts the minimality of $a$.
\end{proof}

\subsection{Good polynomials} \label{subsec:good-polys}

\

\medskip

Theorem \ref{w_l-theorem} can be used to decide for a given $K \in
\NN$ whether the expression $8 \cdot f'_k \cdot f \cdot q(f^2) \in
\QQ [\chi] / I \langle K \rangle$ lands in $4 \cdot \ZZ[\chi] / I
\langle K \rangle$ or not for many but not all polynomials $q(x) \in
\QQ[x]$. Here we introduce polynomials $r_n^-$, which are in the
next subsection proved to be the best in a sense that they are
polynomials with leading coefficient $1$ yielding elements in $4
\cdot \ZZ[\chi] / I \langle K \rangle$ for a large $K$ in comparison
with the other polynomials of the same degree with leading
coefficient $1$.

We start by defining auxiliary polynomials $p_k$ which are used to
define polynomials denoted $q_n$ whose properties are summarized in
Proposition \ref{K(q-n)-prop}. A careful analysis gives an inductive
procedure for construction of certain linear combinations of
polynomials $q_n$, denoted $\widetilde q_n$, with better properties
that $q_n$ themselves. This is the content of Proposition
\ref{r_n-prop}. The assumptions of this proposition turn out to be
trivially true for small $n$ which enables us to perform the
induction to define the desired polynomials $r_n^-$ in Definition
\ref{r_n-defn}.

Notice that the for any $q(x) \in \QQ[x]$ we get
\[
w_0 (8 \cdot f'_k \cdot f \cdot q(f^2))=\infty
\]
since $w_0 (f) = \infty$ because of ($1+\chi) \divides f$. Further
notice that for $p_1(x) := x+1$ we have
\begin{equation} \label{p-1}
p_1(f^2) = f^2 +1 = 2 \cdot \frac{1+\chi^2}{(1-\chi)^2}.
\end{equation}
Hence $(1+\chi^2) \divides p_1(f^2)$ in $\QQ[\chi] / I \langle K
\rangle$ and $w_1 (8 \cdot f'_k \cdot f \cdot p_1(f^2))=\infty$.
Further observe that
\begin{equation} \label{substitution}
\frac{(f^2+1)^2}{4 \cdot f^2} = \frac{(1+\chi^2)^2}{(1-\chi^2)^2}.
\end{equation}
Motivated by that we make the following
\begin{defn} \label{p-k-defn}
Let $p_1 (x) := x+1 \in \ZZ[x]$. For $k \in  \NN$ define inductively
\begin{equation} \label{p-ks}
p_{k+1} (x) := p_k \bigg( \frac{(x+1)^2}{4x} \bigg) \cdot
(4x)^{2^{k-1}} \in \ZZ[x].
\end{equation}
\end{defn}

Notice that $p_k (x)$ is a polynomial in $\ZZ[x]$ of degree
$2^{k-1}$.

\begin{thm} \label{p-k-thm}
We have
\[
p_k (f^2) = 2^{2^k-1} \cdot \frac{1+\chi^{2^k}}{(1-\chi)^{2^k}} \in
\QQ[\chi] / I \langle K \rangle \quad  \textup{for} \; k \in \NN.
\]
\end{thm}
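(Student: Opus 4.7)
The plan is to proceed by induction on $k$, but first to observe that the identity is really a rational function identity that descends. Both sides of the claimed equation lie in the localization $\ZZ[\chi][(1-\chi)^{-1}]$, and as shown in the proof of Lemma \ref{f_k-lem} the element $1-\chi$ is a unit in $\QQ[\chi]/I\langle K\rangle$. Hence it suffices to prove the identity in $\QQ(\chi)$ (equivalently in $\ZZ[\chi][(1-\chi)^{-1}]$), and then push it down through the localization-to-quotient map. Working in $\QQ(\chi)$ has the advantage that $f$ itself is invertible, so the recursive formula for $p_{k+1}$ can be used verbatim.

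The base case $k=1$ is precisely equation (\ref{p-1}), which states $p_1(f^2) = f^2+1 = 2\cdot(1+\chi^2)/(1-\chi)^2 = 2^{2^1-1}(1+\chi^{2^1})/(1-\chi)^{2^1}$. For the inductive step, assume the identity for $k$. In $\QQ(\chi)$ the defining recursion gives
\[
p_{k+1}(f^2) = p_k\!\left(\frac{(f^2+1)^2}{4f^2}\right)\cdot (4f^2)^{2^{k-1}}.
\]
Equation (\ref{substitution}) rewrites the argument as $(1+\chi^2)^2/(1-\chi^2)^2$, which is exactly the analogue of $f^2$ obtained by substituting $\chi^2$ for $\chi$. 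Since the inductive hypothesis is a rational function identity — equivalently, the formal identity $p_k((1+z)^2/(1-z)^2) = 2^{2^k-1}(1+z^{2^k})/(1-z)^{2^k}$ in $\QQ(z)$ — substituting $z=\chi^2$ yields
\[
p_k\!\left(\frac{(1+\chi^2)^2}{(1-\chi^2)^2}\right) = 2^{2^k-1}\cdot\frac{1+\chi^{2^{k+1}}}{(1-\chi^2)^{2^k}}.
\]

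Combining with $(4f^2)^{2^{k-1}} = 2^{2^k}(1+\chi)^{2^k}/(1-\chi)^{2^k}$ and then using the factorization $(1-\chi^2)^{2^k} = (1-\chi)^{2^k}(1+\chi)^{2^k}$ to cancel the $(1+\chi)^{2^k}$ factor, we obtain
\[
p_{k+1}(f^2) = 2^{(2^k-1)+2^k}\cdot\frac{1+\chi^{2^{k+1}}}{(1-\chi)^{2^{k+1}}} = 2^{2^{k+1}-1}\cdot\frac{1+\chi^{2^{k+1}}}{(1-\chi)^{2^{k+1}}},
\]
which is the claimed formula for $k+1$. Reducing modulo $I\langle K\rangle$ finishes the proof.

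The only real subtlety is the legitimacy of the substitution $z \mapsto \chi^2$ in the inductive hypothesis; this is what forces one to phrase the induction as a rational function identity rather than as an identity inside a specific quotient ring, because $1+\chi$ (and hence $1-\chi^2$ and $f$) fails to be a unit in $\QQ[\chi]/I\langle K\rangle$. Once that viewpoint is adopted, the remaining work is the straightforward algebraic simplification described above.
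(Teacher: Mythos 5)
Your proposal is correct and follows essentially the same route as the paper: reduce to a rational function identity in $\QQ(\chi)$, then induct on $k$ using the base case (\ref{p-1}) and the substitution (\ref{substitution}) together with the change of variable $\chi \mapsto \chi^2$ in the inductive hypothesis. The paper packages that change of variable as an explicit $\QQ$-algebra homomorphism $\alpha\co\QQ(\chi)\to\QQ(\chi)$, while you phrase it as a formal identity in $\QQ(z)$ specialized at $z=\chi^2$; these are the same idea, and your added justification for why the identity in $\QQ(\chi)$ descends to $\QQ[\chi]/I\langle K\rangle$ is a correct elaboration of what the paper leaves implicit.
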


\begin{proof}
It suffices to prove the equality
\[
p_k (f^2) = 2^{2^k-1} \cdot \frac{1+\chi^{2^k}}{(1-\chi)^{2^k}}
\]
in the field of rational functions $\QQ(\chi)$. The proof now goes
by induction with respect to $k \in \NN$. The case $k=1$ is proved
by the identity (\ref{p-1}). Now the induction step. Let $\alpha \co
\QQ(\chi) \lra \QQ(\chi)$ be the homomorphism given by $\chi \mapsto
\chi^2$. We calculate:
\begin{align*}
p_{k+1} (f^2) & = p_k \bigg( \frac{(f^2+1)^2}{4 \cdot f^2} \bigg) \cdot \big(4f^2\big)^{2^{k-1}} \\
& = p_k \bigg(\bigg( \frac{1+\chi^2}{1-\chi^2} \bigg)^2\bigg) \cdot 2^{2^k} \cdot f^{2^k} \; \quad \quad \quad \textup{by (\ref{substitution})} \\
& = p_k \big((\alpha (f))^2\big) \cdot 2^{2^k} \cdot f^{2^k} \\
& = \alpha \big(p_k (f^2)\big) \cdot 2^{2^k} \cdot f^{2^k} \\
& = \frac{1+(\chi^2)^{2^k}}{(1-\chi^2)^{2^k}} \cdot 2^{2^k-1} \cdot 2^{2^k} \cdot \frac{(1+\chi)^{2^k}}{(1-\chi)^{2^k}} \\
& = \frac{1+\chi^{2^{k+1}}}{(1-\chi)^{2^k} \cdot (1+\chi)^{2^k}} \cdot 2^{2^{k+1}-1} \cdot \frac{(1+\chi)^{2^k}}{(1-\chi)^{2^k}} \\
& = 2^{2^{k+1}-1} \cdot \frac{1+\chi^{2^{k+1}}}{(1-\chi)^{2^{k+1}}}
\end{align*}
\end{proof}

\begin{cor} \label{p_k-w_l}
We have
\begin{enumerate}
\item $w_l (p_k (f^2)) = \infty$ when $l=k$
\item $w_l (p_k (f^2)) = 2^k-1$ when $l>k$
\end{enumerate}
\end{cor}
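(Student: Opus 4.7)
The plan is to read off both parts directly from Theorem \ref{p-k-thm}, which, after clearing the denominator, says
\[
p_k(f^2) \cdot (1-\chi)^{2^k} = 2^{2^k-1} \cdot (1+\chi^{2^k})
\]
in $\QQ\RhG$. The first observation I need is that $1-\chi$ is already a unit of $\QQ\RhG$: the Chinese remainder decomposition $\QQ\RhG \cong \prod_{m=0}^{K-1} \QQ[\chi]/\langle 1+\chi^{2^m}\rangle$ identifies $\QQ\RhG$ with a product of cyclotomic fields $\QQ(\zeta_{2^{m+1}})$, in each of which $1-\chi$ maps to a nonzero element. Granted this, part (1) is immediate: for $l=k$ we rewrite $p_k(f^2) = 2^{2^k-1}(1+\chi^{2^k}) \cdot u$ with $u = (1-\chi)^{-2^k}$ a unit, so $(1+\chi^{2^k})$ divides $p_k(f^2)$ in $\QQ\RhG$ and Definition \ref{def-w_l} gives $w_k(p_k(f^2)) = \infty$.

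For part (2), where $l > k$, I would apply the multiplicativity of $w_l$ from Lemma \ref{calc-rules-w_l}(1) to the displayed identity to obtain
\[
w_l(p_k(f^2)) = (2^k-1) + w_l(1+\chi^{2^k}) - 2^k \cdot w_l(1-\chi).
\]
Here I use $w_l(2^{2^k-1}) = 2^k-1$ from Example \ref{w-l-expls}(1). The value $w_l(1-\chi) = 2^{-l}$ drops out of Lemma \ref{preparation-w_l} with $a=0$, $b=1$, $v_1=1$, $v_2=0$ (this is already implicit in Example \ref{w-l-expls}(3)). So the whole calculation hinges on computing $w_l(1+\chi^{2^k})$.

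This last piece I would handle by expanding $\chi^{2^k} = \bigl(1-(1-\chi)\bigr)^{2^k}$ via the binomial theorem. Because $\binom{2^k}{i}$ is even for every $1 \leq i \leq 2^k-1$ (the standard $2$-adic fact $v_2(\binom{2^k}{i}) = k - v_2(i) \geq 1$), the expansion regroups as
\[
1 + \chi^{2^k} = (1-\chi)^{2^k} + 2\bigl(1 + y(\chi)\bigr)
\]
for some $y(\chi) \in \ZZ[\chi]$. Since $l > k$ we have $2^k \leq 2^l - 1$, so this expression is precisely the normal form of Lemma \ref{preparation-w_l} with $a=0$, $b=2^k$, $v_1 = 1$ (so $v_1(1)=1$ is odd), $v_2 = 1+y$ and $u=1$. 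Reading off the lemma gives $w_l(1+\chi^{2^k}) = 2^{k-l}$, and substituting yields
\[
w_l(p_k(f^2)) = (2^k-1) + 2^{k-l} - 2^k \cdot 2^{-l} = 2^k-1,
\]
which is exactly claim (2).

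The principal technical point, and the step I expect to require the most care, is the identification of the triple $(a,b,v_1)$ for $1+\chi^{2^k}$: one has to notice that the intermediate binomial coefficients in $\bigl(1-(1-\chi)\bigr)^{2^k}$ are all even so that the whole binomial tail can be absorbed into the $2\cdot v_2$ slot, and then verify that $b=2^k$ fits inside the allowed range $\{0,\ldots,2^l-1\}$ exactly when $l>k$. Once this is done, everything else is formal manipulation with the calculus of $w_l$ developed in subsection \ref{subsec:w-l}.
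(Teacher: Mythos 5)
Your proof is correct and takes essentially the same approach as the paper: both rely on Theorem \ref{p-k-thm} together with the product formula for $w_l$, and the crux in both cases is showing $w_l(1+\chi^{2^k}) = w_l((1-\chi)^{2^k}) = 2^{k-l}$ for $l>k$. The only difference is a small matter of presentation: you extract the normal form of $1+\chi^{2^k}$ directly from the binomial expansion of $(1-(1-\chi))^{2^k}$, whereas the paper observes $1+\chi^{2^k}\equiv(1-\chi)^{2^k}\pmod 2$ and transfers the value of $w_l$ across via the additivity rule of Lemma \ref{calc-rules-w_l} (valid since $w_l((1-\chi)^{2^k})=2^{k-l}<1$ for $l>k$); these are two phrasings of the same observation.
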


\begin{proof}
The first item is immediate from the formula of the previous
theorem. For the second item note that $1+\chi^{2^k} \equiv
(1-\chi)^{2^k} \mod 2$. It follows that $w_l (1+\chi^{2^k}) = w_l
((1-\chi)^{2^k})$ for $l > k$ and hence
\[
w_l \bigg(\frac{1+\chi^{2^k}}{(1-\chi)^{2^k}}\bigg) = 0 \quad
\textup{for} \; l>k.
\]
Finally use the formula of the previous theorem and the product
formula for $w_l$.
\end{proof}

Now we are ready to introduce the polynomials $q_n$ which will be
good in the already mentioned sense. The idea is that we get good
polynomials when we multiply the polynomials $p_k$ from the previous
definition.

\begin{defn} \label{q-n-defn}
Let $n \geq 0$. Define $a(n), b(n) \geq 0$ as the integers
satisfying
\[
n+1 = 2^{a(n)} + b(n) \mathrm{\quad with \quad} 0 \leq b(n) \leq
2^{a(n)}-1.
\]
Define
\[
q_n (x) := \prod_{r=1}^{a(n)} p_r (x) \cdot (x-1)^{b(n)}.
\]
\end{defn}

\begin{prop} \label{K(q-n)-prop}
Let $n \geq 0$, $k \geq 1$ and $m \in \{1,2\}$. We have
\begin{align*}
& 8 \cdot f'_k \cdot f^m \cdot q_n(f^2) \in 4 \cdot \ZZ[\chi]/ I \langle 2n+1 \rangle,\\
& 8 \cdot f'_k \cdot f^m \cdot q_n(f^2) \in 4 \cdot \ZZ[\chi]/ I \langle 2n+2 \rangle \Longleftrightarrow b(n) = 0,\\
& 8 \cdot f'_k \cdot f^m \cdot q_n(f^2) \notin 4 \cdot \ZZ[\chi]/ I
\langle 2n+3 \rangle.
\end{align*}
Moreover, we have
\begin{align*}
& 8 \cdot f'_k \cdot f^m \cdot q_n(f^2) \cdot (1-\chi)^{2b(n)-1} \in 4 \cdot \ZZ[\chi]/ I \langle 2n+2 \rangle \mathrm{\; if \;} b(n)>0,\\
& 8 \cdot f'_k \cdot f^m \cdot q_n(f^2) \cdot (1-\chi)^{2n+1+2^{a(n)} (2^s-2)} \in 4 \cdot \ZZ[\chi]/ I \langle 2n+2+s \rangle \mathrm{\; for \; all \;} s \geq 1,\\
& 8 \cdot f'_k \cdot f^m \cdot q_n(f^2) \cdot (1-\chi)^{2n} \notin 4
\cdot \ZZ[\chi]/ I \langle 2n+3 \rangle.
\end{align*}
\end{prop}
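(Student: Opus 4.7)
The plan is to apply the $w_l$-technology of Theorem \ref{w_l-theorem} to $g := 8 \cdot f'_k \cdot f^m \cdot q_n(f^2)$ and to the multiples $g \cdot (1-\chi)^j$ appearing in the Moreover clauses. Throughout I will write $\alpha := a(n)$ and $\beta := b(n)$, so that $n+1 = 2^\alpha + \beta$ with $0 \leq \beta \leq 2^\alpha - 1$. Everything reduces to one valuation table together with short numerical comparisons.

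First I would assemble that table. By multiplicativity (Lemma \ref{calc-rules-w_l}) and the values $w_l(8) = 3$, $w_l(f'_k) = 0$, $w_l(f^m)|_{l=0} = \infty$, $w_l(f^m)|_{l \geq 1} = 0$ from Example \ref{w-l-expls}, the calculation reduces to $w_l(q_n(f^2))$. Using $w_l(p_r(f^2)) = \infty$ when $l = r$ and $= 2^r - 1$ when $l > r$ from Corollary \ref{p_k-w_l}, together with $w_l(f^2-1) = 2 - 2^{1-l}$, the definition of $q_n$ and the relation $n+1 = 2^\alpha + \beta$ yield
\[
w_l(g) = \begin{cases} \infty & 0 \leq l \leq \alpha, \\ 2n + 3 - \alpha - \beta \cdot 2^{1-l} & l \geq \alpha + 1. \end{cases}
\]

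With this table I would dispatch the three main assertions by direct comparison with the bound $2 + K - l - 2^{-l}$. For $K = 2n+1$ the critical comparison at $l = \alpha + 1$ becomes $2^{\alpha+1} \geq 2\beta - 1$, trivially true since $\beta \leq 2^\alpha - 1$, so Theorem \ref{w_l-theorem}(1) gives the inclusion. For $K = 2n+2$ the same comparison at $l = \alpha + 1$ becomes $0 \geq 2\beta - 1$, holding iff $\beta = 0$; for $\beta \geq 1$ Theorem \ref{w_l-theorem}(2) with trivial helper $h = 1$ and $l' = \alpha + 1$ forces non-inclusion. For $K = 2n+3$ non-inclusion again follows from part (2): if $\beta = 0$ take $h = 1$ and $l' = \alpha + 1$; if $\beta \geq 1$ take $h = (1-\chi)^{2\beta - 1}$, for which a direct calculation collapses the $\beta$-dependence to $w_l(gh) = 2n + 3 - \alpha - 2^{-l}$ on $l \geq \alpha + 1$, so the difference from the bound is $l - \alpha - 2$, negative only at $l' = \alpha + 1$.

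The three Moreover assertions follow by the same template applied to the specified multiples of $g$. The first re-uses the collapsed formula $w_l(g(1-\chi)^{2\beta-1}) = 2n + 3 - \alpha - 2^{-l}$, which on $l \geq \alpha + 1$ meets the bound $2 + (2n+2) - l - 2^{-l}$; part (1) then applies. The second, with $h_s = (1-\chi)^{2n+1+2^\alpha(2^s-2)}$, uses the identity $2n+1-2\beta = 2^{\alpha+1}-1$ to give $w_l(g h_s) = 2n + 3 - \alpha + (2^{\alpha+s}-1) \cdot 2^{-l}$ on $l \geq \alpha+1$; setting $u := \alpha + s - l$, the required inequality reduces to $2^u \geq u+1$, valid for every integer $u$. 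The third uses $2n - 2\beta = 2^{\alpha+1} - 2$ to give $w_l(g(1-\chi)^{2n}) = 2n + 3 - \alpha + 2^{\alpha+1-l} - 2^{1-l}$ on $l \geq \alpha+1$; part (2) applies with trivial helper and $l' = \alpha+1$, where the difference from the bound equals $-2^{-\alpha-1}$. The main obstacle is purely combinatorial bookkeeping: managing the six sub-assertions with their distinct helpers $h$ and spotting the two arithmetic identities $2n+1-2\beta = 2^{\alpha+1}-1$ and $2n-2\beta = 2^{\alpha+1}-2$ that force the $\beta$-dependence to telescope into clean closed forms.
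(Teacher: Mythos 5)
Your proposal is correct and follows essentially the same $w_l$-valuation route as the paper: you derive the same valuation formula $w_l(8 f'_k f^m q_n(f^2)) = 2n+3-a(n)-b(n)\cdot 2^{1-l}$ for $l \geq a(n)+1$ (the paper's Lemma \ref{w_l(q_n)}) and then compare against $2+K-l-2^{-l}$ via Theorem \ref{w_l-theorem}. The only deviation is cosmetic: for the third main assertion the paper multiplies uniformly by $(1-\chi)^{2n}$ (simultaneously establishing the third Moreover claim and deducing the non-inclusion), whereas you split into $\beta=0$ and $\beta\geq 1$ with different helpers $h$ and then re-derive the Moreover claim separately; both choices satisfy the hypotheses of Theorem \ref{w_l-theorem}(2), so the argument goes through either way.
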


We use the $w_l$-technology for which we need:

\begin{lem} \label{w_l(q_n)}
Let $n \geq 0$, $k \geq 1$ and $m \in \{1,2\}$. We have
\[
w_l(8 \cdot f'_k \cdot f^m \cdot q_n(f^2)) = \begin{cases} \infty  &
l \leq a(n) \\ 2n+3-a(n)-\frac{b(n)}{2^{l-1}} & l \geq a(n)+1
\end{cases}
\]
\end{lem}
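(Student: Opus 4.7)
The proof is a direct computation using the product rule $w_l(g_1 g_2) = w_l(g_1) + w_l(g_2)$ from Lemma \ref{calc-rules-w_l}(1), applied to the factorization
\[
8 \cdot f'_k \cdot f^m \cdot q_n(f^2) = 2^3 \cdot f'_k \cdot f^m \cdot \prod_{r=1}^{a(n)} p_r(f^2) \cdot (f^2-1)^{b(n)},
\]
together with the $w_l$-values already recorded in Example \ref{w-l-expls} and Corollary \ref{p_k-w_l}. The plan is to split into the two cases $l \leq a(n)$ and $l \geq a(n)+1$ appearing in the statement.

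For $l \leq a(n)$, the goal is to exhibit one factor with $w_l = \infty$, which by the product rule forces the whole expression to have $w_l = \infty$. If $l = 0$, then Example \ref{w-l-expls}(2) gives $w_0(f) = \infty$, and since $m \geq 1$ the factor $f^m$ already does the job. If instead $1 \leq l \leq a(n)$, then $p_l(f^2)$ is one of the factors of $q_n(f^2)$, and Corollary \ref{p_k-w_l}(1) gives $w_l(p_l(f^2)) = \infty$.

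For $l \geq a(n)+1$, every factor has finite valuation and one simply sums. Using $w_l(2^3) = 3$, $w_l(f'_k) = 0$ and $w_l(f^m) = 0$ (the last by Example \ref{w-l-expls}(2), since $l \geq 1$), together with Corollary \ref{p_k-w_l}(2) which gives $w_l(p_r(f^2)) = 2^r - 1$ for each $r \leq a(n) < l$, and Example \ref{w-l-expls}(4) which gives $w_l(f^2-1) = 2 - 2^{1-l}$, one obtains
\[
w_l(8 \cdot f'_k \cdot f^m \cdot q_n(f^2)) = 3 + \sum_{r=1}^{a(n)}(2^r - 1) + b(n)\bigl(2 - 2^{1-l}\bigr).
\]
Evaluating the geometric sum $\sum_{r=1}^{a(n)}(2^r-1) = 2^{a(n)+1} - 2 - a(n)$ and using the defining relation $n+1 = 2^{a(n)} + b(n)$ rewrites this as $2(n+1) + 1 - a(n) - b(n)/2^{l-1} = 2n+3 - a(n) - b(n)/2^{l-1}$, as claimed.

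There is no real obstacle here; the only points requiring care are the separate treatment of $l=0$ (where $f$ rather than some $p_l$ provides the infinite valuation) and the bookkeeping of the geometric sum in terms of $a(n)$ and $b(n)$.
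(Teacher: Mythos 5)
Your proof is correct and follows the same route the paper indicates (apply the product rule for $w_l$ to the explicit factorization of $8 f'_k f^m q_n(f^2)$, plug in the values from Example \ref{w-l-expls} and Corollary \ref{p_k-w_l}, and simplify using $n+1 = 2^{a(n)} + b(n)$); the paper's own proof consists of exactly this instruction with the details left to the reader. The separate handling of $l=0$ versus $1 \leq l \leq a(n)$ in the infinite case, and the algebra collapsing the geometric sum, are both correct.
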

\begin{proof}
Use the formulas from Lemma \ref{calc-rules-w_l}, Example
\ref{w-l-expls} and Corollary \ref{p_k-w_l}.
\end{proof}

\begin{proof}[Proof of Proposition \ref{K(q-n)-prop}]
The desired results are obtained using the criteria from Theorem
\ref{w_l-theorem}.
\begin{align*}
& w_l \left( 8 \cdot f'_k \cdot f^m \cdot q_n(f^2) \right) - \left( 2+2n+1-l-2^{-l} \right) =\\
& \qquad \qquad \begin{cases} \infty & \; l \leq a(n) \\
l-a(n)-\frac{2b(n)-1}{2^l} \geq 0 & \; l \geq a(n)+1 \end{cases}
\end{align*}
implies
\[
8 \cdot f'_k \cdot f^m \cdot q_n(f^2) \in 4 \cdot \ZZ[\chi]/ I
\langle 2n+1 \rangle.
\]
For $b(n) > 0$ we have
\begin{align*}
& w_l \left( 8 \cdot f'_k \cdot f^m \cdot q_n(f^2) \right) - \left( 2+2n+2-l-2^{-l} \right) =\\
& \qquad \qquad \begin{cases} \infty & \; l \leq a(n) \\
-\frac{2b(n)-1}{2^{a(n)+1}} < 0 & \; l = a(n)+1 \\
l-(a(n)+1)-\frac{2b(n)-1}{2^l} \geq 0 &  \; l \geq a(n)+2
\end{cases}
\end{align*}
and
\begin{align*}
& w_l \left( 8 \cdot f'_k \cdot f^m \cdot q_n(f^2) \cdot (1-\chi)^{2b(n)-1} \right) - \left( 2+2n+2-l-2^{-l} \right) =\\
& \qquad \qquad \begin{cases} \infty & \; l \leq a(n) \\ l-(a(n)+1)
\geq 0 & \; l \geq a(n)+1 \end{cases}
\end{align*}
which imply
\begin{align*}
& 8 \cdot f'_k \cdot f^m \cdot q_n(f^2) \notin 4 \cdot \ZZ[\chi]/ I \langle 2n+2 \rangle,\\
& 8 \cdot f'_k \cdot f^m \cdot q_n(f^2) \cdot (1-\chi)^{2b(n)-1} \in
4 \cdot \ZZ[\chi]/ I \langle 2n+2 \rangle.
\end{align*}
For $b(n) = 0$ we have
\begin{align*}
& w_l \left( 8 \cdot f'_k \cdot f^m \cdot q_n(f^2) \right) - \left( 2+2n+2-l-2^{-l} \right) =\\
& \qquad \qquad  \begin{cases} \infty & \; l \leq a(n) \\
l-(a(n)+1)+\frac{1}{2^l} \geq 0 & \; l \geq a(n)+1 \end{cases}
\end{align*}
which implies
\[
8 \cdot f'_k \cdot f^m \cdot q_n(f^2) \in 4 \cdot \ZZ[\chi]/ I
\langle 2n+2 \rangle.
\]
From
\begin{align*}
& w_l \left( 8 \cdot f'_k \cdot f^m \cdot q_n(f^2) \cdot (1-\chi)^{2n} \right) - \left( 2+2n+3-l-2^{-l} \right) =\\
& \qquad \qquad \begin{cases} \infty & \; l \leq a(n) \\
-\frac{1}{2^{a(n)+1}} < 0 & \; l = a(n)+1 \\
l-(a(n)+2)+\frac{2^{a(n)+1}-1}{2^l} \geq 0 &  \; l \geq a(n)+2
\end{cases}
\end{align*}
we conclude
\begin{align*}
& 8 \cdot f'_k \cdot f^m \cdot q_n(f^2) \cdot (1-\chi)^{2n} \notin 4 \cdot \ZZ[\chi]/ I \langle 2n+3 \rangle \mathrm{\quad and \; hence}\\
& 8 \cdot f'_k \cdot f^m \cdot q_n(f^2) \notin 4 \cdot \ZZ[\chi]/ I
\langle 2n+3 \rangle.
\end{align*}
It remains to show
\[
8 \cdot f'_k \cdot f^m \cdot q_n(f^2) \cdot (1-\chi)^{2n+1+2^{a(n)}
(2^s-2)} \in 4 \cdot \ZZ[\chi]/ I \langle 2n+2+s \rangle \mathrm{\;
for \; all \;} s \geq 1.
\]
\begin{align*}
& w_l \left( 8 \cdot f'_k \cdot f^m \cdot q_n(f^2) \cdot (1-\chi)^{2n+1+2^{a(n)} (2^s-2)} \right) - \left( 2+2n+2+s-l-2^{-l} \right) =\\
& \qquad \qquad \begin{cases} \infty & \; l \leq a(n) \\
l-a(n)-s-1+2^{a(n)+s-l} &  \; l \geq a(n)+1 \end{cases}
\end{align*}
We set $c := a(n)+s-l$ and have to show $2^c \geq c + 1$ for all $c
\in \ZZ$. This is obviously true for $c \leq -1$. The statement for
$c \geq 0$ follows by induction.
\end{proof}

Notice that the polynomials $q_n$ have slightly better properties
when $b(n) = 0$. This suggests that there might exist better
polynomials than $q_n$ when $b(n) > 0$. This turns out to be true,
there exist polynomials $r_n^-$ of degree $n$ with leading
coefficient $1$ such that
\[
8 \cdot f'_k \cdot f^m \cdot r^-_n(f^2) \in 4 \cdot \ZZ[\chi]/ I
\langle 2n+2 \rangle.
\]
Their construction needs some preparation.

\begin{lem}\label{c_min=odd}
Let $g \in 2 \cdot \RhGm$ where $G=\ZZ_{2^K}$. If $g \notin 4 \cdot
\RhGm$ then there exists an odd natural number $c$ such that
\begin{align*}
&g(\chi) \cdot (1-\chi)^c \in 4 \cdot \ZZ[\chi]/ I \langle K \rangle,\\
&g(\chi) \cdot (1-\chi)^{c-1} \notin 4 \cdot \ZZ[\chi]/ I \langle K
\rangle.
\end{align*}
\end{lem}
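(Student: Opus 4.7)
My plan is to reduce the problem modulo $2$. Write $g = 2\tilde g$ with $\tilde g \in \RhGm$; the hypothesis $g \notin 4\RhGm$ together with the standard divide-by-$2$ argument (if $2h \in \RhGm$ for some $h \in \RhG$, then $h \in \RhGm$) forces $\tilde g \not\equiv 0$ in $\RhG/2$. Because $N = 2^K$, Frobenius in characteristic $2$ gives $1+\chi+\cdots+\chi^{N-1} \equiv (1-\chi)^{N-1} \pmod 2$, so $\RhG/2 \cong \ZZ_2[y]/y^{N-1}$ with $y := 1-\chi$. The condition $g\cdot(1-\chi)^c \in 4\cdot\ZZ[\chi]/I\langle K \rangle$ then becomes $\bar{\tilde g}\cdot y^c = 0$ in $\ZZ_2[y]/y^{N-1}$, so the minimal such $c$ equals $c = N-1-m$, where $m := \nu_y(\bar{\tilde g})$ denotes the $y$-adic order. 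Since $N-1$ is odd, $c$ is odd if and only if $m$ is even, and that is the statement I must prove.

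To control $m$, I will describe $\RhGm\otimes\ZZ_2$ inside $\ZZ_2[y]/y^{N-1}$ explicitly using $z := \chi + \chi^{-1} \in \RhGp$. Modulo $2$ one has $z \equiv \chi^{-1}(1+\chi)^2 = \chi^{-1}y^2$, and since $\chi^{-1}$ has nonzero constant term in $y$ it is a unit in $\ZZ_2[y]/y^{N-1}$; hence $z$ is $y^2$ times a unit, $\nu_y(z^k) = 2k$, and the powers $1, z, \ldots, z^{N/2-1}$ are $\ZZ_2$-linearly independent (they have pairwise distinct $y$-orders). This gives $\dim_{\ZZ_2}\ZZ_2[z] = N/2$. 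On the other hand the inclusion $\RhGp/2\RhGp \hookrightarrow \RhG/2$ (injective by the same divide-by-$2$ argument) shows $\dim_{\ZZ_2}(\RhGp \otimes \ZZ_2) = N/2$ as well; as $\RhGp \otimes \ZZ_2$ is a subring containing $z$, I conclude $\RhGp \otimes \ZZ_2 = \ZZ_2[z]$.

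The final ingredient is to prove that multiplication by $\chi - \chi^{-1}$ defines a surjection $\RhGp \twoheadrightarrow \RhGm$ over $\ZZ$, via the telescoping identity
\[
 \chi^k - \chi^{-k} \;=\; (\chi - \chi^{-1}) \cdot (\chi^{k-1} + \chi^{k-3} + \cdots + \chi^{-(k-1)}),
\]
whose right-hand bracket is invariant under $\chi \mapsto \chi^{-1}$ (hence lies in $\RhGp$), while the left-hand sides generate $\RhGm$ as $k$ runs over $1,\ldots,N/2-1$. Reducing modulo $2$ identifies $\chi - \chi^{-1}$ with $z$, so $\RhGm \otimes \ZZ_2 = z\cdot\ZZ_2[z]$; every nonzero element then has $\nu_y \in \{2,4,\ldots,N-2\}$, forcing $m$ even. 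I expect the integral surjectivity statement to be the main obstacle, but once the telescoping identity is in place the rest of the argument is just $y$-adic bookkeeping and a dimension count.
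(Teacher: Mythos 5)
Your structural reframing---passing to $\RhG/2 \cong \ZZ_2[y]/y^{N-1}$ with $y=1-\chi$, converting the lemma to the statement that every nonzero element of the image of $\RhGm$ in $\RhG/2$ has even $y$-adic order, and then identifying that image as $z\cdot\ZZ_2[z]$ with $z=\chi+\chi^{-1}$---is a genuinely cleaner route than the paper's. The paper proceeds by hand: it writes $\overline g = \sum a_k(\chi^k-\chi^{-k})$, iteratively peels off factors of $\chi-\chi^{-1}$ until it can exhibit $\overline g = (\chi-\chi^{-1})^n\cdot(1+\textup{anti-symmetric tail})$, then sets $c = 2^K-2n-1$ and checks both required (non)divisibilities directly in $\ZZ_2[\chi]$. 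The telescoping identity you invoke is exactly what drives that iteration, so the two proofs share the same engine, but your version packages it more conceptually.

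However, the argument as written has a gap, and it comes from the precise definition of $\RhGp$ in this paper: $\RhGp$ is \emph{not} the full $(+1)$-eigenspace of the involution but rather $\langle \chi^k+\chi^{-k}\mid k=0,\ldots,N/2-1\rangle$, which carries the extra condition $p(-1)\equiv 0\pmod 2$. Two of your steps break against this. First, the divide-by-$2$ argument fails for $\RhGp$: the generator $\chi^0+\chi^0=2$ lies in $\RhGp$ and equals $2\cdot 1$ with $1\in\RhG$, but $1\notin\RhGp$ since $1(-1)=1$ is odd. So $\RhGp/2\RhGp\to\RhG/2$ has kernel containing the class of $2$, the image has dimension $N/2-1$ rather than $N/2$, and that image is $z\cdot\ZZ_2[z]$, not $\ZZ_2[z]$. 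Second, the telescoping bracket $\chi^{k-1}+\chi^{k-3}+\cdots+\chi^{1-k}$ is invariant under $\chi\mapsto\chi^{-1}$ but takes the value $\pm k$ at $\chi=-1$; for odd $k$ that is odd, so the bracket is not in $\RhGp$, and multiplication by $\chi-\chi^{-1}$ is in fact \emph{not} surjective from $\RhGp$ onto $\RhGm$ over $\ZZ$ (the cokernel is $\ZZ_2$, represented by $\chi-\chi^{-1}$ itself).

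The conclusion $\RhGm\otimes\ZZ_2 = z\cdot\ZZ_2[z]$ is nonetheless correct, and the fix is cheap. Either replace $\RhGp$ throughout by the full fixed subgroup $\{p : p(\chi^{-1})=p(\chi)\}$, which is spanned by $1,\chi+\chi^{-1},\ldots,\chi^{N/2-1}+\chi^{-(N/2-1)}$, is $2$-saturated in $\RhG$ (so your divide-by-$2$ and dimension arguments apply), reduces mod $2$ to $\ZZ_2[z]$, and does contain all the telescoping brackets; or argue directly that modulo $2$ the generators $\chi^k-\chi^{-k}$ of $\RhGm$ agree with $\chi^k+\chi^{-k}$, and each is a monic $\ZZ_2$-polynomial of degree $k$ in $z$ with zero constant term (a Chebyshev-type identity), so the reductions for $k=1,\ldots,N/2-1$ form a $\ZZ_2$-basis of $z\cdot\ZZ_2[z]$. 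With either repair your $y$-adic bookkeeping then carries through unchanged.
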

\begin{proof}
The element $g \in 2 \cdot \RhGm$ can be written as
\[
g(\chi) = 2 \cdot \sum_{k=1}^{2^{K-1}} a_k \cdot (\chi^k -
\chi^{-k})
\]
with $a_k \in \ZZ$. We set
\[
\overline{g} (\chi) := \sum_{k=1}^{2^{K-1}} a_k \cdot (\chi^k -
\chi^{-k}) \in \ZZ_2[\chi] / I \langle K \rangle.
\]
Now suppose that $g \notin 4 \cdot \RhGm$ i.e. $h \neq 0$. Since
\[
\chi^k - \chi^{-k} = (\chi - \chi^{-1}) \cdot \left( \chi^{1-k} +
\chi^{3-k} + \ldots + \chi^{k-3} + \chi^{k-1} \right),
\]
any element $y \in \ZZ_2[\chi] / I \langle K \rangle$ of the shape
$y(\chi) = \sum_{k=1}^m c_k \cdot (\chi^k - \chi^{-k})$ can be
written as
\[
y(\chi) = \left( \chi - \chi^{-1} \right)\cdot \left( c'_0 +
\sum_{k=1}^{m-1} c'_k \cdot (\chi^k - \chi^{-k}) \right).
\]
Now, we transform $\overline{g}$ in this way and repeat the
transformation as long as the occurring $c'_0$ is zero. We finally
get
\[
\overline{g} (\chi) = \left( \chi - \chi^{-1} \right)^n \cdot \left(
1 + \sum_{k=1}^{2^{K-1}-n} b_k \cdot (\chi^k - \chi^{-k}) \right).
\]
We set $c := 2^K-2n-1$. Notice that we have in $\ZZ_2[\chi]$
\[
1+\chi+\ldots+\chi^{2^K-1} = \prod_{r=1}^{K-1} (1+\chi^{2^r}) =
\prod_{r=1}^{K-1} (1-\chi)^{2^r} = (1-\chi)^{2^K-1}
\]
and
\[
\left( \chi - \chi^{-1} \right)^n = \left( \chi^{-1} \cdot
(1-\chi)^2 \right)^n = \chi^{-n} \cdot (1-\chi)^{2n}.
\]
Therefore, we calculate in $\ZZ_2[\chi] / I \langle K \rangle$
\[
\left( \chi - \chi^{-1} \right)^n \cdot (1-\chi)^c = \chi^{-n}
(1-\chi)^{2^K-1} = 0.
\]
This implies $\overline{g} (\chi) \cdot (1-\chi)^c = 0$ in
$\ZZ_2[\chi] / I \langle K \rangle$ and hence
\[
g(\chi) \cdot (1-\chi)^c \in 4 \cdot \ZZ[\chi]/ I \langle K \rangle.
\]
It remains to show
\[
g(\chi) \cdot (1-\chi)^{c-1} \notin 4 \cdot \ZZ[\chi]/ I \langle K
\rangle.
\]
We prove this by contradiction. Suppose $g(\chi) \cdot
(1-\chi)^{c-1} \in 4 \cdot \ZZ[\chi]/ I \langle K \rangle$ which
implies $\overline{g} (\chi) \cdot (1-\chi)^{c-1} = 0$ in
$\ZZ_2[\chi] / I \langle K \rangle$. This means that there exists $q
\in \ZZ_2[\chi]$ with
\[
\left( \chi - \chi^{-1} \right)^n \cdot \left( 1 +
\sum_{k=1}^{2^{K-1}-n} b_k \cdot (\chi^k - \chi^{-k}) \right) \cdot
(1-\chi)^{c-1} = q(\chi) \cdot (1+\chi+\ldots+\chi^{2^K-1}).
\]
We conclude in $\ZZ_2[\chi]$
\[
\chi^{-n} \cdot (1-\chi)^{2n} \cdot \left( 1 +
\sum_{k=1}^{2^{K-1}-n} b_k \cdot (\chi^k - \chi^{-k}) \right) \cdot
(1-\chi)^{c-1} = q(\chi) \cdot (1-\chi)^{2^K-1}
\]
and hence
\[
\chi^{-n} \cdot \left( 1 + \sum_{k=1}^{2^{K-1}-n} b_k \cdot (\chi^k
- \chi^{-k}) \right) = q(\chi) \cdot (1-\chi).
\]
This implies the desired contradiction
\[
1 = 1^{-n} \cdot \left( 1 + \sum_{k=1}^{2^{K-1}-n} b_k \cdot (1^k -
1^{-k}) \right) = q(1) \cdot (1-1) = 0 \mathrm{\quad in \;} \ZZ_2.
\]
\end{proof}

\begin{lem}\label{c_min-add}
Let $g_i \in \QQ[\chi] / I \langle K \rangle$  for $i = 1,2$ and let
$c \geq 1$ be such that
\begin{align*}
& 2 \cdot g_i(\chi) \in 4 \cdot \ZZ[\chi] / I \langle K \rangle,\\
& g_i(\chi) \cdot (1-\chi)^c \in 4 \cdot \ZZ[\chi] / I \langle K \rangle,\\
& g_i(\chi) \cdot (1-\chi)^{c-1} \notin 4 \cdot \ZZ[\chi] / I
\langle K \rangle.
\end{align*}
Then
\[
\left( g_1(\chi) + g_2(\chi) \right) \cdot (1-\chi)^{c-1} \in 4
\cdot \ZZ[\chi] / I \langle K \rangle.
\]
\end{lem}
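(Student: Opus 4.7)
The plan is to reduce the question modulo $2$ and exploit the fact that $\ZZ_2[\chi]$ is a local ring at $(1-\chi)$, where the cyclotomic ideal collapses to a power of $(1-\chi)$.

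First, using $2g_i \in 4\ZZ[\chi]/I\langle K \rangle$, I would pick representatives $h_i \in \ZZ[\chi]/I\langle K\rangle$ with $g_i = 2h_i$. The hypotheses then translate to: $2 h_i (1-\chi)^c \in 4 \ZZ[\chi]/I\langle K\rangle$ but $2 h_i (1-\chi)^{c-1} \notin 4\ZZ[\chi]/I\langle K\rangle$, i.e.\ after reducing mod $2$, $\overline{h_i}(1-\chi)^c = 0$ while $\overline{h_i}(1-\chi)^{c-1} \neq 0$ in $\ZZ_2[\chi]/I\langle K\rangle$.

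Next, as already observed in the proof of Lemma \ref{c_min=odd}, in characteristic $2$ one has the identity $1 + \chi + \cdots + \chi^{2^K - 1} = (1-\chi)^{2^K - 1}$, so $\ZZ_2[\chi]/I\langle K\rangle \cong \ZZ_2[\chi]/(1-\chi)^{N-1}$ with $N = 2^K$. In this local ring every nonzero element is uniquely of the form $(1-\chi)^j \cdot u$ with $u(1) = 1$. The conditions on $\overline{h_i}$ force $\overline{h_i} = (1-\chi)^{N-1-c} \cdot u_i$ for some $u_i \in \ZZ_2[\chi]/(1-\chi)^c$ with $u_i(1) = 1$ (otherwise the vanishing order would be strictly larger, contradicting $\overline{h_i}(1-\chi)^{c-1}\ne 0$).

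The key step is then the easy observation that $u_1(1) + u_2(1) = 1 + 1 = 0$ in $\ZZ_2$, so $(1-\chi) \mid (u_1 + u_2)$ in $\ZZ_2[\chi]$. Consequently
\[
\overline{h_1} + \overline{h_2} = (1-\chi)^{N-1-c}(u_1 + u_2) = (1-\chi)^{N-c} \cdot v
\]
for some $v \in \ZZ_2[\chi]$, whence $(\overline{h_1} + \overline{h_2})(1-\chi)^{c-1} = (1-\chi)^{N-1} v = 0$ in $\ZZ_2[\chi]/(1-\chi)^{N-1}$. Lifting back to $\ZZ[\chi]/I\langle K \rangle$, this says $(h_1 + h_2)(1-\chi)^{c-1} \in 2\ZZ[\chi]/I\langle K\rangle$, and multiplying by $2$ yields $(g_1+g_2)(1-\chi)^{c-1} \in 4\ZZ[\chi]/I\langle K\rangle$, as required. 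The only mildly delicate point is choosing integer representatives $h_i$ cleanly (one must be careful that $2g_i \in 4\ZZ[\chi]/I\langle K\rangle$ really allows one to halve, which follows because the ideal $I\langle K\rangle$ is generated by an integer polynomial and $\ZZ[\chi]$ is torsion-free); everything else is bookkeeping in the local ring $\ZZ_2[\chi]/(1-\chi)^{N-1}$.
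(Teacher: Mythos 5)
Your proof is correct, and it is essentially the same argument as the paper's: reduce mod $2$, use $1+\chi+\cdots+\chi^{2^K-1}=(1-\chi)^{2^K-1}$ in $\ZZ_2[\chi]$, and observe that the two mod-$2$ reductions are forced to agree, so they cancel. The only cosmetic difference is that you set $h_i := g_i/2$ and phrase the rigidity via the valuation in the local ring $\ZZ_2[\chi]/(1-\chi)^{2^K-1}$, while the paper sets $h_i$ equal to a degree-bounded representative of $g_i(1-\chi)^{c-1}/2$ and pins down $\overline{h_i}=(1+\chi)^{2^K-2}$ using the degree bound; both are the same pigeonhole.
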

\begin{proof}
Since $2 \cdot g_i(\chi) \cdot (1-\chi)^{c-1} \in 4 \cdot \ZZ[\chi]
/ I \langle K \rangle$, there exist polynomials $h_i \in \ZZ[\chi]$
($i=1,2$) such that $2 \cdot g_i(\chi) \cdot (1-\chi)^{c-1}$ and $4
\cdot h_i(\chi)$ coincide modulo $I \langle K \rangle$. We can
require that $\deg(h_i) \leq 2^K-2$. Let $\overline{h_i}$ be the
image of $h_i$ under the epimorphism $\ZZ[\chi] \twoheadrightarrow
\ZZ_2[\chi]$. Notice that $1+\chi+\ldots+\chi^{2^K-1}$ divides
$\overline{h_i}(\chi) \cdot (1-\chi)$ in $\ZZ_2[\chi]$ because of
$g_i(\chi) \cdot (1-\chi)^c \in 4 \cdot \ZZ[\chi]/ I \langle K
\rangle$. In $\ZZ_2[\chi]$ we have
\[
1+\chi+\ldots+\chi^{2^K-1} = \prod_{r=0}^{K-1} (1+\chi^{2^r}) =
\prod_{r=0}^{K-1} (1+\chi)^{2^r} = (1+\chi)^{2^K-1}.
\]
Therefore, $(1+\chi)^{2^K-2}$ divides $\overline{h_i}(\chi)$. Since
$g_i(\chi) \cdot (1-\chi)^{c-1} \notin 4 \cdot \ZZ[\chi] / I \langle
K \rangle$, we have $\overline{h_i}(\chi) \neq 0$. We conclude from
$\deg(\overline{h_i}) \leq \deg(h_i) \leq 2^K-2$ that
$\overline{h_i}(\chi) = (1+\chi)^{2^K-2}$. Therefore,
\[
\overline{h_1}(\chi) + \overline{h_2}(\chi) = 2 \cdot
(1+\chi)^{2^K-2} = 0 \mathrm{\quad in \;} \ZZ_2[\chi].
\]
This implies $h_1(\chi) + h_2(\chi) \in 2 \cdot \ZZ[\chi]$. In
$\QQ[\chi] / I \langle K \rangle$ we finally conclude
\[
\left( g_1(\chi) + g_2(\chi) \right) \cdot (1-\chi)^{c-1} = 2 \cdot
\left( h_1(\chi) + h_2(\chi) \right) \in 4 \cdot \ZZ[\chi] / I
\langle K \rangle.
\]
\end{proof}

\begin{lem}\label{mult2}
Let $g \in \QQ[\chi]/ I \langle K+1 \rangle$ such that $\pr_l(g) \in
4 \cdot \ZZ[\chi] / \langle 1+\chi^{2^K} \rangle$. Then
\[
g \in 4 \cdot \ZZ[\chi]/ I \langle K \rangle \; \Longleftrightarrow
\; 2g \in 4 \cdot \ZZ[\chi]/ I \langle K+1 \rangle.
\]
\end{lem}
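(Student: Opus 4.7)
The plan is to handle both directions of the equivalence, preceded by the preliminary observation that $2g \in 4\cdot\ZZ[\chi]/I\langle K+1\rangle$ is equivalent to $g \in 2\cdot\ZZ[\chi]/I\langle K+1\rangle$. I would verify this preliminary by writing $2g - 4h = q Z_{K+1}$ in $\QQ[\chi]$, noting $q\in\ZZ[\chi]$ by the monicity of $Z_{K+1}$, and reducing modulo $2$ in the integral domain $\ZZ_2[\chi]$, where $\bar Z_{K+1} = (1+\chi)^{2^{K+1}-1}\neq 0$ forces $q \in 2\ZZ[\chi]$. I read the notation ``$g \in 4\cdot\ZZ[\chi]/I\langle K\rangle$'' as a condition on the canonical image $\tilde g$ of $g$ in $\QQ[\chi]/I\langle K\rangle$ under the surjection induced by $I\langle K+1\rangle = Z_K\cdot(1+\chi^{2^K})\cdot\QQ[\chi]\subset I\langle K\rangle$.

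For the ``$\Rightarrow$'' direction, I would lift $\tilde g \equiv 4h_0 \mod I\langle K\rangle$ to $g \equiv 4h_0 + qZ_K \mod I\langle K+1\rangle$, with $q\in\ZZ[\chi]$ well-defined modulo $\langle 1+\chi^{2^K}\rangle$ (absorbing any residual $Z_{K+1}$-piece). The identity $(1-\chi)Z_K = 1-\chi^{2^K} \equiv 2 \mod \langle 1+\chi^{2^K}\rangle$ shows that $Z_K$ is a unit modulo $\langle 1+\chi^{2^K}\rangle$ with inverse $(1-\chi)/2$. Combined with the hypothesis $\pr_K(g) \in 4\cdot\ZZ[\chi]/\langle 1+\chi^{2^K}\rangle$, this forces $\pr_K(q) \in 2(1-\chi)\cdot\ZZ[\chi]/\langle 1+\chi^{2^K}\rangle$. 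Writing $q = 2(1-\chi)m + (1+\chi^{2^K})r$ with $m,r\in\ZZ[\chi]$ and computing $qZ_K \equiv 2m(1-\chi^{2^K}) \mod I\langle K+1\rangle$ (the second term vanishing since $rZ_{K+1}\equiv 0$), I conclude $g \equiv 4h_0 + 2m(1-\chi^{2^K}) \in 2\cdot\ZZ[\chi]/I\langle K+1\rangle$, which gives the conclusion via the preliminary.

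For the ``$\Leftarrow$'' direction, the preliminary gives $g \equiv 2h \mod I\langle K+1\rangle$ with $h\in\ZZ[\chi]$, so $\tilde g = 2\tilde h$ in $\QQ[\chi]/I\langle K\rangle$, and it suffices to show $\tilde h \in 2\cdot\ZZ[\chi]/I\langle K\rangle$. The hypothesis translates to $\pr_K(h)\in 2\cdot\ZZ[\chi]/\langle 1+\chi^{2^K}\rangle$. Representing $h = \sum_{i=0}^{2^{K+1}-2} a_i\chi^i$ with $a_i\in\ZZ$ (and $a_{2^{K+1}-1}:=0$), the condition on $\pr_K(h)$ becomes $a_i \equiv a_{i+2^K} \mod 2$ for $0\le i\le 2^K-1$ (using $\chi^{2^K}\equiv -1 \mod \langle 1+\chi^{2^K}\rangle$). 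Since $\chi^{2^K}\equiv 1 \mod I\langle K\rangle$ (as $Z_K\equiv 0$ forces $\chi\cdot Z_K = 0$, giving $\chi^{2^K}=1$), the image $\tilde h$ has coefficients $b_i = a_i + a_{i+2^K}$ for $0\le i\le 2^K-1$, all of which are even by the parity conditions, and $a_{2^K-1}$ itself is also even. Collapsing to the basis $1,\chi,\ldots,\chi^{2^K-2}$ via $\chi^{2^K-1}=-(1+\chi+\cdots+\chi^{2^K-2})$ preserves evenness, yielding $\tilde h \in 2\cdot\ZZ[\chi]/I\langle K\rangle$.

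The main subtlety is the simultaneous handling of the two complementary identifications $\chi^{2^K}\equiv 1 \mod I\langle K\rangle$ and $\chi^{2^K}\equiv -1 \mod \langle 1+\chi^{2^K}\rangle$ arising in the two projections, together with careful integrality tracking across the Chinese remainder theorem (Lemma \ref{Chinese remainder-trick}). Once the pivotal identity $(1-\chi)Z_K = 1-\chi^{2^K}$ is exploited to invert $Z_K$ modulo $\langle 1+\chi^{2^K}\rangle$, and once the coefficient parities are properly organized, the remaining calculations are elementary.
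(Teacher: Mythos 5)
Your proof is correct, and it takes a route that is related to but genuinely different in its organization and tactics from the paper's. The paper's proof for the forward direction constructs the explicit Chinese-remainder combination $2g = 4(1+\chi^{2^K})k + 4(1-\chi^{2^K})h$ where $4k \equiv g$ mod $I\langle K\rangle$ and $4h \equiv g$ mod $\langle 1+\chi^{2^K}\rangle$; its backward direction compares $2k$ with the integral combination $k(1+\chi^{2^K}) + 2h(1-\chi^{2^K})$ under mod-$2$ reduction in $\ZZ_2[\chi]$ to extract a factor of $2$. You instead (a) reduce the target statement via the trivial preliminary equivalence $2g \in 4\ZZ[\chi]/I\langle K+1\rangle \Leftrightarrow g \in 2\ZZ[\chi]/I\langle K+1\rangle$ (this is immediate from multiplication by $2^{-1}$ in the $\QQ$-vector space $\QQ[\chi]/I\langle K+1\rangle$; your mod-$2$ verification is unnecessarily elaborate and, taken literally, has a benign slip since $q$ a priori lies only in $\QQ[\chi]$), (b) exploit the identity $(1-\chi)Z_K \equiv 2 \bmod \langle 1+\chi^{2^K}\rangle$ to invert $Z_K$ and thereby analyze the correction term $qZ_K$ in the decomposition $g = 4h_0 + qZ_K$, and (c) in the backward direction replace the paper's CRT comparison by a fully explicit coefficient-parity argument comparing the specializations $\chi^{2^K}\mapsto -1$ and $\chi^{2^K}\mapsto +1$. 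Two small technical remarks: in the forward step, $q$ and $r$ are a priori in $\QQ[\chi]$, not $\ZZ[\chi]$, but this is harmless since the only thing used about $r$ is that $rZ_{K+1}\equiv 0$ in $\QQ[\chi]/I\langle K+1\rangle$, which holds for rational $r$; and the singling out of ``$a_{2^K-1}$ is also even'' is subsumed by the general condition $a_i \equiv a_{i+2^K}\bmod 2$ once $a_{2^{K+1}-1}:=0$ is declared, so it is redundant but not wrong. Your backward direction is arguably more elementary and transparent than the paper's, at the cost of being more notation-heavy; the paper's is more compact and less dependent on explicit coefficient bookkeeping.
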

\begin{proof}
Assume first $g \in 4 \cdot \ZZ[\chi]/ I \langle K \rangle$. Let $h
\in \ZZ[\chi]$ be such that $4h$ and $g$ coincide in $\QQ[\chi] /
\langle 1+\chi^{2^K} \rangle$ and let $k \in \ZZ[\chi]$ such that
$4k$ and $g$ coincide in $\QQ[\chi]/ I \langle K \rangle$. Then we
obtain in $\QQ[\chi]/ I \langle K+1 \rangle$ the equation
\[
2 \cdot g(\chi) = 4 \cdot (1+\chi^{2^K}) \cdot k(\chi) + 4 \cdot
(1-\chi^{2^K}) \cdot h(\chi).
\]
which shows $2g \in 4 \cdot \ZZ[\chi]/ I \langle K+1 \rangle$.

Now assume $2g \in 4 \cdot \ZZ[\chi]/ I \langle K+1 \rangle$. We
want to show $g \in 4 \cdot \ZZ[\chi]/ I \langle K \rangle$. Let $h
\in \ZZ[\chi]$ be again such that $4h$ and $g$ coincide in
$\QQ[\chi] / \langle 1+\chi^{2^K} \rangle$ and let $k \in \ZZ[\chi]$
be such that $4k$ and $2g$ (resp. $2k$ and $g$) coincide in
$\QQ[\chi]/ I \langle K+1 \rangle$. Then $2 \cdot k(\chi)$ and
$k(\chi) \cdot (1+\chi^{2^K}) + 2 \cdot h(\chi) \cdot
(1-\chi^{2^K})$ coincide in $\QQ[\chi]/ I \langle K \rangle$ and in
$\QQ[\chi]/ \langle 1+\chi^{2^K} \rangle$ and hence also in
$\QQ[\chi]/ I \langle K+1 \rangle$. Therefore there exists an $r \in
\QQ[\chi]$ with
\[
2 \cdot k(\chi) = k(\chi) \cdot (1+\chi^{2^K}) + 2 \cdot h(\chi)
\cdot (1-\chi^{2^K}) + r(\chi) \cdot
(1+\chi+\ldots+\chi^{2^{K+1}-1}).
\]
We conclude $r \in \ZZ[\chi]$. Under the epimorphism $\ZZ[\chi]
\twoheadrightarrow \ZZ_2[\chi]$ we get
\[
0 = \overline{k}(\chi) \cdot (1+\chi^{2^K}) + \overline{r}(\chi)
\cdot (1+\chi+\ldots+\chi^{2^{K+1}-1})
\]
and hence
\[
0 = \overline{k}(\chi) + \overline{r}(\chi) \cdot
(1+\chi+\ldots+\chi^{2^K-1}).
\]
We set $s(\chi) := k(\chi) + r(\chi) \cdot
(1+\chi+\ldots+\chi^{2^K-1}) \in \ZZ[\chi]$. The vanishing of $s$
under the epimorphism $\ZZ[\chi] \twoheadrightarrow \ZZ_2[\chi]$
implies the existence of $t \in \ZZ[\chi]$ with $2t = s$. We
conclude in $\QQ[\chi]/ I \langle K \rangle$
\[
g = 2k = 2s = 4t.
\]
This shows $g \in 4 \cdot \ZZ[\chi]/ I \langle K \rangle$.
\end{proof}

The desired polynomials $r_n^-$ are obtained inductively. The
crucial inductive step is based on the following proposition. The
idea is motivated by the properties of $q_n$ when $b (n) = 0$ and is
based on the following observation: If we assume for a given $n \in
\NN$ with $b(n) > 0$ the existence of polynomials $\widetilde q_l$
for $l \leq \lfloor \frac{n}{2} \rfloor -1$ which are slightly
better than $q_l$ then we are able to conclude the existence of a
$\widetilde q_n$ which is also better than $q_n$.

\begin{prop}\label{r_n-prop}
Let $n \geq 0$, $k \geq 1$ and $m \in \{1,2\}$. Let $\wq_l \in
\ZZ[\chi]$ be polynomials for $0 \leq l \leq
\lfloor\frac{n}{2}\rfloor-1$ such that
\begin{align*}
& 8 \cdot f'_k \cdot f^m \cdot \wq_l(f^2) \in 4 \cdot \ZZ[\chi]/ I \langle 2l+2 \rangle,\\
& 8 \cdot f'_k \cdot f^m \cdot \wq_l(f^2) \cdot (1-\chi)^{2l} \notin 4 \cdot \ZZ[\chi]/ I \langle 2l+3 \rangle,\\
& 8 \cdot f'_k \cdot f^m \cdot \wq_l(f^2) \cdot
(1-\chi)^{2l+1+2^{a(l)} (2^s-2)} \in 4 \cdot \ZZ[\chi]/ I \langle
2l+2+s \rangle
\end{align*}
for all $0 \leq l \leq \lfloor\frac{n}{2}\rfloor-1$, $s \geq 1$.
Then there exist unique $a_l \in \{0,1\}$ for $0 \leq l \leq
\lfloor\frac{n}{2}\rfloor-1$ such that
\[
\wq_n := q_n + \sum_{l=0}^{\lfloor\frac{n}{2}\rfloor-1} a_l \cdot
2^{2(n-l)-1} \cdot \wq_l
\]
satisfies
\[
8 \cdot f'_k \cdot f^m \cdot \wq_n(f^2) \in 4 \cdot \ZZ[\chi]/ I
\langle 2n+2 \rangle.
\]
Moreover, we get
\begin{align*}
& 8 \cdot f'_k \cdot f^m \cdot \wq_n(f^2) \cdot (1-\chi)^{2n} \notin 4 \cdot \ZZ[\chi]/ I \langle 2n+3 \rangle,\\
& 8 \cdot f'_k \cdot f^m \cdot \wq_n(f^2) \cdot
(1-\chi)^{2n+1+2^{a(n)} (2^s-2)} \in 4 \cdot \ZZ[\chi]/ I \langle
2n+2+s \rangle
\end{align*}
for all $s \geq 1$.
\end{prop}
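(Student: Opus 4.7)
The plan is to work in $\QQ[\chi]/I\langle 2n+2\rangle$ and use the minimal-odd-exponent calculus of Lemmas \ref{c_min=odd}--\ref{c_min-add}, together with Lemma \ref{mult2} to shift levels. If $b(n) = 0$ then Proposition \ref{K(q-n)-prop} already gives $8 f'_k f^m q_n(f^2) \in 4\cdot\ZZ[\chi]/I\langle 2n+2\rangle$, so $\wq_n := q_n$ and every $a_l := 0$ trivially works. I will therefore assume $b(n) \geq 1$, which forces $a(n) \geq 1$; the arithmetic bound $b(n) \leq \lfloor n/2\rfloor$ (straightforward from $n+1 = 2^{a(n)} + b(n)$ and $b(n) \leq 2^{a(n)}-1$) will be essential for keeping the indices in range.

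Set $G_n := 8 f'_k f^m q_n(f^2)$ and $A_l := 2^{2(n-l)-1}\cdot 8 f'_k f^m \wq_l(f^2)$ for $0 \leq l \leq \lfloor n/2\rfloor - 1$. My first step is to compute the minimal odd exponents of Lemma \ref{c_min=odd} at level $2n+2$. For $G_n$, Proposition \ref{K(q-n)-prop} combined with the $w_l$-analysis from the previous subsection yields minimal exponent $2b(n)-1$; more precisely, the $w_l$-criterion of Theorem \ref{w_l-theorem}(2) with $l' = a(n)+1$ detects the obstruction. For each $A_l$, one iterated application of Lemma \ref{mult2} reduces the question at level $2n+2$ to the analogous question about $8 f'_k f^m \wq_l(f^2)$ at level $2l+3$; the inductive hypothesis then yields minimal exponent $2l+1$.

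Existence of the $a_l$ will then follow by greedy descent. Starting from $\widetilde G := G_n$, as long as $\widetilde G \notin 4\cdot\ZZ[\chi]/I\langle 2n+2\rangle$, I read off its current minimal exponent $c$, set $l := (c-1)/2$ (which stays in $\{0,\ldots,\lfloor n/2\rfloor-1\}$ because the initial $c$ is at most $2\lfloor n/2\rfloor - 1$ and $c$ strictly decreases), flip $a_l$ from $0$ to $1$, and replace $\widetilde G$ by $\widetilde G + A_l$; Lemma \ref{c_min-add} drives the minimal exponent down by at least $2$ at each step, so the process terminates in at most $b(n)$ steps. For uniqueness, any two solutions $(a_l),(a'_l)$ produce a difference $D := \sum (a_l - a'_l) A_l \in 4\cdot\ZZ[\chi]/I\langle 2n+2\rangle$; if $D \neq 0$, letting $l^*$ be the largest index with $a_{l^*} \neq a'_{l^*}$, the minimal exponent of a sum of terms with distinct minimal exponents equals the largest of them, so $c(D) = 2l^*+1 > 0$, contradicting $D \in 4\cdot\ZZ[\chi]/I\langle 2n+2\rangle$.

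For the refined $(1-\chi)^?$ statements at the end, I will decompose $\wq_n = q_n + \sum a_l\cdot 2^{2(n-l)-1} \wq_l$: the $q_n$ part is covered by Proposition \ref{K(q-n)-prop} directly, and each correction summand is tamed by iterated Lemma \ref{mult2} combined with the inductive hypothesis, once one verifies the arithmetic inequality
\[
2n+1+2^{a(n)}(2^s-2) \geq 2l+1+2^{a(l)}(2^{s+1}-2)
\]
for all $0 \leq l \leq \lfloor n/2\rfloor-1$ and $s \geq 1$, and a similar inequality for the ``not in'' statement at $s = 0$ with exponent $2n$. Verifying these inequalities is the main bookkeeping obstacle; I expect it to reduce to a short case analysis on $a(l)$ versus $a(n)$, using $l \leq \lfloor n/2 \rfloor - 1$ together with the defining relations $l+1 = 2^{a(l)} + b(l)$ and $b(l) \leq 2^{a(l)}-1$.
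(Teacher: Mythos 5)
Your proof is correct and follows essentially the same approach as the paper: reduce to $b(n) \geq 1$, place the correction terms $A_l$ at minimal odd exponent $2l+1$ at level $2n+2$ by iterating Lemma \ref{mult2}, and use the odd-exponent calculus of Lemmas \ref{c_min=odd} and \ref{c_min-add} for both existence and uniqueness, with the refined divisibility statements handled via Lemma \ref{mult2} and the same arithmetic inequalities the paper verifies. The only difference is cosmetic: for existence the paper minimizes $c$ over all choices of $(a_l)$ and derives a contradiction (using a sign $(-1)^{a_{l'}}$ to flip either $0\to 1$ or $1\to 0$), whereas you run a greedy descent starting from $a_l\equiv 0$; since the candidate index $(c-1)/2$ strictly decreases at each step, your descent only ever flips $0\to 1$ and the two arguments are equivalent.
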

\begin{proof}
From Proposition \ref{K(q-n)-prop} we know that $b(n) = 0$ implies
\[
8 \cdot f'_k \cdot f^m \cdot q_n(f^2) \in 4 \cdot \ZZ[\chi]/ I
\langle 2n+2 \rangle.
\]
In the case $b(n)>0$ we have
\[
8 \cdot f'_k \cdot f^m \cdot q_n(f^2) \cdot (1-\chi)^{2b(n)-1} \in 4
\cdot \ZZ[\chi]/ I \langle 2n+2 \rangle.
\]
Now let $c \geq 0$ be the smallest number such that there exist
coefficients $a_l$ satisfying
\begin{equation}\label{eq_c}
8 \cdot f'_k \cdot f^m \cdot \left( q_n(f^2) +
\sum_{l=0}^{\lfloor\frac{n}{2}\rfloor-1} a_l \cdot 2^{2(n-l)-1}
\cdot \wq_l(f^2) \right) \cdot (1-\chi)^c \in 4 \cdot \ZZ[\chi]/ I
\langle 2n+2 \rangle.
\end{equation}
We have to show $c=0$. We will give a proof by contradiction and
assume that $c > 0$. We already know that $c \leq 2b(n)-1$. From
Lemma \ref{c_min=odd} we conclude that $c$ is odd. We set $l' :=
\frac{c-1}{2}$. Since $2l'+1 = c \leq 2b(n)-1 \leq b(n)+2^{a(n)}-2 =
n-1$, we conclude $l' \leq \lfloor\frac{n}{2}\rfloor-1$. Let $(a_l)$
be a choice of coefficients with the property (\ref{eq_c}). We set
\[
g_1(\chi) := 8 \cdot f'_k \cdot f^m \cdot q_n(f^2) + 8 \cdot f'_k
\cdot f^m \cdot \sum_{l=0}^{\lfloor\frac{n}{2}\rfloor-1} a_l \cdot
2^{2(n-l)-1} \cdot \wq_l(f^2).
\]
Notice that $2 \cdot g_1(\chi) \in 4 \cdot \ZZ[\chi]/ I \langle 2n+2
\rangle$ because all summands lie in this ring (use Lemma
\ref{mult2}). Moreover, we have $g_1(\chi) \cdot (1-\chi) \in 4
\cdot \ZZ[\chi]/ I \langle 2n+2 \rangle$ and $g_1(\chi) \notin 4
\cdot \ZZ[\chi]/ I \langle 2n+2 \rangle$. Define
\[
 g_2(\chi) := 8 \cdot f'_k \cdot f^m \cdot (-1)^{a_{l'}} \cdot 2^{2(n-l')-1} \cdot \wq_{l'}(f^2)
\]
Using Lemma \ref{mult2} we see that $2 \cdot g_2(\chi), \, g_2(\chi)
\cdot (1-\chi)^c \in 4 \cdot \ZZ[\chi]/ I \langle 2n+2 \rangle$ but
$g_2(\chi) \cdot (1-\chi)^{c-1} \notin 4 \cdot \ZZ[\chi]/ I \langle
2n+2 \rangle$. Now, we can use Lemma \ref{c_min-add} and get
\[
\left( g_1(\chi) + g_2(\chi) \right) \cdot (1-\chi)^{c-1} \in 4
\cdot \ZZ[\chi] / I \langle K \rangle.
\]
But this means that
\[
8 \cdot f'_k \cdot f^m \cdot \left( q_n(f^2) +
\sum_{l=0}^{\lfloor\frac{n}{2}\rfloor-1} a'_l \cdot 2^{2(n-l)-1}
\cdot \wq_l(f^2) \right) \cdot (1-\chi)^{c-1} \in 4 \cdot \ZZ[\chi]/
I \langle 2n+2 \rangle.
\]
where the coefficients $(a'_l)$ are given by
\[
a'_l :=
\begin{cases} a_l & \; l \neq l' \\ a_{l'} + (-1)^{a_{l'}} & \; l = l' \end{cases}.
\]
This is a contradiction to the minimality of $c$. So far we have
shown that there exist $a_l \in \{0,1\}$ $(0 \leq l \leq
\lfloor\frac{n}{2}\rfloor-1)$ such that
\[
\wq_n := q_n + \sum_{l=0}^{\lfloor\frac{n}{2}\rfloor-1} a_l \cdot
2^{2(n-l)-1} \cdot \wq_l
\]
satisfies
\[
8 \cdot f'_k \cdot f^m \cdot \wq_n(f^2) \in 4 \cdot \ZZ[\chi]/ I
\langle 2n+2 \rangle.
\]

Our next aim is to show the uniqueness of the coefficients. We will
give a proof by contradiction. Assume that there exist two different
choices of coefficients $(a_l)$, $(a'_l)$ such that the
corresponding $\wq_n$, $\wq'_n$ satisfy
\[
8 \cdot f'_k \cdot f^m \cdot \wq_n(f^2), \, 8 \cdot f'_k \cdot f^m
\cdot \wq'_n(f^2) \in 4 \cdot \ZZ[\chi]/ I \langle 2n+2 \rangle.
\]
We set $b_l := a_l - a'_l$ and conclude
\[
8 \cdot f'_k \cdot f^m \cdot \left(
\sum_{l=0}^{\lfloor\frac{n}{2}\rfloor-1} b_l \cdot 2^{2(n-l)-1}
\cdot \wq_l(f^2) \right) \in 4 \cdot \ZZ[\chi]/ I \langle 2n+2
\rangle.
\]
Let ${\hat l}$ be the largest element with $b_{\hat l} \neq 0$ (i.e.
$b_{\hat l} = \pm 1$). Using Lemma \ref{mult2}, we conclude
\[
8 \cdot f'_k \cdot f^m \cdot 2^{2(n-l)-1} \cdot \wq_l(f^2) \cdot
(1-\chi)^{2{\hat l}} \in 4 \cdot \ZZ[\chi]/ I \langle 2n+2 \rangle
\mathrm{ \quad for \;} l < {\hat l}
\]
and
\[
8 \cdot f'_k \cdot f^m \cdot 2^{2(n-l)-1} \cdot \wq_{\hat l}(f^2)
\cdot (1-\chi)^{2{\hat l}} \notin 4 \cdot \ZZ[\chi]/ I \langle 2n+2
\rangle.
\]
This implies
\[
8 \cdot f'_k \cdot f^m \cdot \left(
\sum_{l=0}^{\lfloor\frac{n}{2}\rfloor-1} b_l \cdot 2^{2(n-l)-1}
\cdot \wq_l(f^2) \right) \cdot (1-\chi)^{2{\hat l}} \notin 4 \cdot
\ZZ[\chi]/ I \langle 2n+2 \rangle
\]
contradicting
\[
8 \cdot f'_k \cdot f^m \cdot \left(
\sum_{l=0}^{\lfloor\frac{n}{2}\rfloor-1} b_l \cdot 2^{2(n-l)-1}
\cdot \wq_l(f^2) \right) \in 4 \cdot \ZZ[\chi]/ I \langle 2n+2
\rangle.
\]
It remains to prove
\begin{align*}
& 8 \cdot f'_k \cdot f^m \cdot \wq_n(f^2) \cdot (1-\chi)^{2n} \notin 4 \cdot \ZZ[\chi]/ I \langle 2n+3 \rangle,\\
& 8 \cdot f'_k \cdot f^m \cdot \wq_n(f^2) \cdot
(1-\chi)^{2n+1+2^{a(n)} (2^s-2)} \in 4 \cdot \ZZ[\chi]/ I \langle
2n+2+s \rangle
\end{align*}
for all $s \geq 1$. Using Lemma \ref{mult2} we obtain
\[
8 \cdot f'_k \cdot f^m \cdot 2^{2(n-l)-1} \cdot \wq_l(f^2) \cdot
(1-\chi)^{2l+1+2^{a(l)} (2^{s'}-2)} \in 4 \cdot \ZZ[\chi]/ I \langle
2n+1+s' \rangle
\]
for all $s' \geq 2$. Let $l \leq \lfloor\frac{n}{2}\rfloor-1$.
Setting $s' := s+1$ we conclude
\[
8 \cdot f'_k \cdot f^m \cdot 2^{2(n-l)-1} \cdot \wq_l(f^2) \cdot
(1-\chi)^{2n+1+2^{a(n)} (2^s-2)} \in 4 \cdot \ZZ[\chi]/ I \langle
2n+2+s \rangle
\]
because
\begin{align*}
& 2l+1+2^{a(l)} (2^{s+1}-2) \; \leq \; 2(\frac{n}{2}-1)+1+2^{a(n)-1} (2^{s+1}-2) \; =\\
& 2n+1+2^{a(n)} (2^s-2) - b(n) \; \leq \; 2n+1+2^{a(n)} (2^s-2).
\end{align*}
Setting $s' := 2$ we obtain
\[
8 \cdot f'_k \cdot f^m \cdot 2^{2(n-l)-1} \cdot \wq_l(f^2) \cdot
(1-\chi)^{2n} \in 4 \cdot \ZZ[\chi]/ I \langle 2n+3 \rangle
\]
because
\[
2l+1+2^{a(l)} (2^2-2) \leq 2(\frac{n}{2}-1)+1+2^{a(n)} = 2n-b(n)
\leq 2n.
\]
Therefore, it suffices to show
\begin{align*}
& 8 \cdot f'_k \cdot f^m \cdot q_n(f^2) \cdot (1-\chi)^{2n} \notin 4 \cdot \ZZ[\chi]/ I \langle 2n+3 \rangle,\\
& 8 \cdot f'_k \cdot f^m \cdot q_n(f^2) \cdot
(1-\chi)^{2n+1+2^{a(n)} (2^s-2)} \in 4 \cdot \ZZ[\chi]/ I \langle
2n+2+s \rangle
\end{align*}
for all $s \geq 1$. But this was proved in Proposition
\ref{K(q-n)-prop}.
\end{proof}

\noindent Notice that the assumptions in Proposition \ref{r_n-prop}
are trivially fulfilled if $n=0,1$.

\begin{defn} \label{r_n-defn}
We define $r^-_n \in \ZZ[\chi]$ as the polynomials $\wq_n$ we obtain
successively from Proposition \ref{r_n-prop} starting with $n = 0$
and proceeding with $n = 1, 2, 3, \ldots$.
\end{defn}

\noindent For example, $r^-_0 = q_0$, $r^-_1 = q_1$, $r^-_2 = q_2 +
2^3 \cdot q_0$, $r^-_3 = q_3$, $r^-_4 = q_4 + 2^7 \cdot q_0$.

\begin{cor} \label{r_n-properties}
The polynomial $r^-_n$ is of degree $n \in \NN$ with leading
coefficient $1$ and it satisfies
\begin{align*}
& 8 \cdot f'_k \cdot f^m \cdot r^-_n(f^2) \in 4 \cdot \ZZ[\chi]/ I \langle 2n+2 \rangle,\\
& 8 \cdot f'_k \cdot f^m \cdot r^-_n(f^2) \cdot (1-\chi)^{2n} \notin 4 \cdot \ZZ[\chi]/ I \langle 2n+3 \rangle,\\
& 8 \cdot f'_k \cdot f^m \cdot r^-_n(f^2) \cdot
(1-\chi)^{2n+1+2^{a(n)} (2^s-2)} \in 4 \cdot \ZZ[\chi]/ I \langle
2n+2+s \rangle
\end{align*}
for all $s \geq 1$.
\end{cor}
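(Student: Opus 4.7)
The statement of Corollary \ref{r_n-properties} is essentially a package of properties that Proposition \ref{r_n-prop} is designed to produce, so the plan is to run a straightforward strong induction on $n \in \NN$, feeding the output of Proposition \ref{r_n-prop} at stage $n$ as input for later stages, and to separately verify the claim about degree and leading coefficient from the explicit formula
\[
r^-_n \;=\; q_n + \sum_{l=0}^{\lfloor n/2\rfloor - 1} a_l \cdot 2^{2(n-l)-1} \cdot r^-_l.
\]

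For the induction on the three divisibility properties, I would take $n = 0$ and $n = 1$ as base cases: here $\lfloor n/2 \rfloor - 1 < 0$, so the sum in Definition \ref{r_n-defn} is empty, $r^-_n = q_n$, and the three required properties are precisely those supplied by Proposition \ref{K(q-n)-prop} (for $n=0$, $b(n)=0$, so $8 \cdot f'_k \cdot f^m \cdot q_0(f^2) \in 4 \cdot \ZZ[\chi]/I\langle 2 \rangle$; for $n=1$ the extra $(1-\chi)^{2b(n)-1}$ factor is absorbed into the statement). For the inductive step at $n \geq 2$, I assume the three properties hold for all $l \leq \lfloor n/2 \rfloor - 1$. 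These are exactly the hypotheses of Proposition \ref{r_n-prop}, whose conclusion then produces the unique coefficients $a_l \in \{0,1\}$ so that $r^-_n$ defined as above satisfies all three required properties at level $n$. This is exactly the content of Definition \ref{r_n-defn}, so the induction closes with no further work.

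For degree and leading coefficient, I would argue as follows. Since $\deg(r^-_l) \leq l \leq \lfloor n/2 \rfloor - 1 < n$ by the induction hypothesis, the leading behaviour of $r^-_n$ is controlled by $q_n$. From $p_1(x) = x+1$ and the recursion $p_{k+1}(x) = p_k\bigl((x+1)^2/(4x)\bigr) \cdot (4x)^{2^{k-1}}$, a short induction shows $p_k$ has degree $2^{k-1}$ and leading coefficient $1$: if $p_k(y) = y^{2^{k-1}} + \cdots$, substituting $y = (x+1)^2/(4x)$ and multiplying by $(4x)^{2^{k-1}}$ turns the leading term into $(x+1)^{2^k}$, which has leading coefficient $1$ in $x$ and degree $2^k$. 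Hence
\[
\deg q_n \;=\; \sum_{r=1}^{a(n)} 2^{r-1} + b(n) \;=\; (2^{a(n)} - 1) + b(n) \;=\; n,
\]
with leading coefficient $1$, and the same therefore holds for $r^-_n$.

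There is no real obstacle here; the substantive work has already been done in Propositions \ref{K(q-n)-prop} and \ref{r_n-prop}. The only mildly delicate point is to check that the base cases $n=0,1$ genuinely invoke no hypothesis from Proposition \ref{r_n-prop} (the range $0 \leq l \leq \lfloor n/2 \rfloor - 1$ is empty) and that the induction on $n$ indeed has access to the properties for all indices $l \leq \lfloor n/2 \rfloor - 1$, which it does since $\lfloor n/2 \rfloor - 1 < n$.
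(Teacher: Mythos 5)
Your proof is correct and follows the same strategy the paper intends: the three divisibility statements are exactly the conclusions of Proposition \ref{r_n-prop} applied inductively, precisely as prescribed by Definition \ref{r_n-defn} (with the base cases $n=0,1$ having vacuous hypotheses), and the degree/leading-coefficient claim follows from $\deg p_k = 2^{k-1}$ with leading coefficient $1$. One small slip in your parenthetical remark: for $n=1$ one has $a(1)=1$ and $b(1)=0$, so the situation is identical to $n=0$ and no $(1-\chi)^{2b(n)-1}$ factor appears at all.
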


Are the polynomials $r^-_n$ best possible? Or does there exist a
polynomial $q$ of degree $n$ with leading coefficient $1$ such that
$8 \cdot f_k \cdot q(f^2) \in \ZZ[\chi]/ I \langle 2n+3 \rangle$? In
the next section we will see that any polynomial $q$ of degree $n$
with the property $8 \cdot f_k \cdot q(f^2) \in \ZZ[\chi]/ I \langle
2n+3 \rangle$ is of the shape
\[
\sum_{l=0}^n a_l \cdot 2^{\max\{2(n-l)+1,0\}} \cdot r^-_l
\]
with $a_n \in \ZZ$. Hence such a polynomial can not have $1$ as
leading coefficient.

\

\subsection{The equation $\mathbf{A^k_K(d) = B_K(d)}$} \label{subsec:A=B}

\

\medskip

In this subsection we first prove $A^k_K(d) = B_K(d)$ for $d= 2e+1$.
Recall that
\begin{align*}
& A^k_K(2e+1) := \left\lbrace q \in \ZZ[x] \mid \deg(q) \leq e-1, \; 8 \cdot f_k \cdot q(f^2) \in 4 \cdot \ZZ[\chi]/ I \langle K \rangle \right\rbrace,\\
& B_K(2e+1) := \left\lbrace \sum_{n=0}^{e-1} a_n \cdot
2^{\max\{K-2n-2,0\}} \cdot r^-_n \mid a_n \in \ZZ \right\rbrace.
\end{align*}

We want to consider a slightly more general situation and prove
$A^{k,m}_K(2e+1) = B_K(2e+1)$ where $A^{k,m}_K(2e+1)$ is defined as
follows.

\begin{defn}
Let $K, k \geq 1$, $e \geq 2$, $m \in \{1,2\}$. Define
\[
A^{k,m}_K(2e+1) := \left\lbrace q \in \ZZ[x] \mid \deg(q) \leq e-1,
\; 8 \cdot f'_k \cdot f^m \cdot q(f^2) \in 4 \cdot \ZZ[\chi]/ I
\langle K \rangle \right\rbrace
\]
\end{defn}

Notice that $A^{k,1}_K(2e+1) = A^k_K(2e+1)$.

\begin{thm}\label{A=B-odd}
Let $K, k \geq 1$, $e \geq 2$, $m \in \{1,2\}$. Then
$A^{k,m}_K(2e+1) = B_K(2e+1)$. In particular,
\[
A^k_K(2e+1) = B_K(2e+1).
\]
\end{thm}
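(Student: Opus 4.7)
The plan is induction on $K$. For the base case $K = 1$: in $\QQ[\chi]/I\langle 1 \rangle \cong \QQ$ (with the image of $\chi$ equal to $-1$) the element $f = (1+\chi)/(1-\chi)$ vanishes, hence $8 \cdot f_k' \cdot f^m \cdot q(f^2) = 0$ for every $q$ and every $m \geq 1$. This gives $A^{k,m}_1(2e+1) = \ZZ[x](d)$. On the other hand, the polynomials $r_n^-$ are monic of degree $n$, so they form a $\ZZ$-basis of $\ZZ[x](d)$; since all exponents $\max\{-2n-1,0\}$ vanish, $B_1(2e+1) = \ZZ[x](d)$ as well.

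For the inductive step, under the hypothesis $A^{k,m}_K(2e+1) = B_K(2e+1)$, I would handle the two inclusions separately. For $B_{K+1} \subseteq A^{k,m}_{K+1}$, it suffices to check that each generator $2^{\max\{K-2n-1,0\}} r_n^-$ lies in $A^{k,m}_{K+1}$. The case $K+1 \leq 2n+2$ is immediate from the first item of Corollary \ref{r_n-properties} via the natural surjection $\QQ[\chi]/I\langle 2n+2 \rangle \twoheadrightarrow \QQ[\chi]/I\langle K+1 \rangle$. For $K+1 > 2n+2$, I would verify the hypothesis of Theorem \ref{w_l-theorem}(1) applied to $2^{K-2n-1} g_n$, where $g_n := 8 \cdot f_k' \cdot f^m \cdot r_n^-(f^2)$; the required lower bounds on $w_l(g_n)$ at every $l$ follow by combining the three items of Corollary \ref{r_n-properties} with the calculational rules of Lemma \ref{calc-rules-w_l} and the base values in Example \ref{w-l-expls}.

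For the reverse inclusion $A^{k,m}_{K+1} \subseteq B_{K+1}$, take $q \in A^{k,m}_{K+1}$. Reducing modulo $I\langle K \rangle$ places $q$ in $A^{k,m}_K = B_K$ by the inductive hypothesis, so we can write uniquely $q = \sum_{n=0}^{e-1} a_n \cdot 2^{\max\{K-2n-2,0\}} \cdot r_n^-$ with $a_n \in \ZZ$. Membership in $B_{K+1}$ is equivalent to $a_n$ being even for every $n$ with $K \geq 2n+2$. I would argue this by contradiction: choose an extremal index $n_0$ (the smallest such) with $K \geq 2n_0+2$ and $a_{n_0}$ odd. The terms with $n < n_0$ then lie in $B_{K+1} \subseteq A^{k,m}_{K+1}$ and may be subtracted off. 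Using the natural surjection from $\QQ[\chi]/I\langle 2n+2\rangle$ one controls the tail with $n > n_0$ when $K$ is close to $2n_0+2$, reducing to the key case in which $a_{n_0} \cdot g_{n_0}$ must itself lie in $4 \cdot \ZZ[\chi]/I\langle K+1\rangle$. Exploiting an iterated application of Lemma \ref{mult2} together with the sharp non-vanishing $g_{n_0}\cdot(1-\chi)^{2n_0} \notin 4 \cdot \ZZ[\chi]/I\langle 2n_0+3\rangle$ from the second item of Corollary \ref{r_n-properties} (combined with Lemma \ref{c_min=odd} and Lemma \ref{c_min-add} to handle the interplay between multiplication by $(1-\chi)$ and by $2$), the oddness of $a_{n_0}$ will be shown to contradict this membership; an application of Theorem \ref{w_l-theorem}(2) with witness $h$ a suitable power of $(1-\chi)$ makes the contradiction explicit.

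The hard part will be the last step: verifying that the obstruction produced by the odd coefficient $a_{n_0}$ truly survives in the full sum rather than being cancelled by contributions from other $r_n^-$ whose coefficients may themselves be odd at their own filtration levels. This requires a careful choice of the extremal $n_0$ and precise bookkeeping of the $w_l$-values of the various $g_n$ at the critical index, so that the optimality statements of Corollary \ref{r_n-properties} for different $n$ are used jointly to rule out cancellation. Once this is in place, the case $d=2e$ will be reduced to the odd case via the suspension $L^{2d+1}(\alpha_k) = L^{2d-1}(\alpha_1) \ast L^1(\alpha_k)$ and the multiplicative relation $f_k = f \cdot f_k'$ of Lemma \ref{f_k-lem}.
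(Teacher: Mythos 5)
Your overall strategy (induction on $K$, writing the element in the $r_n^-$-basis supplied by the inductive hypothesis, arguing by contradiction from a would-be odd coefficient) matches the paper, and your base case is correct. But the inductive step has two concrete problems.

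First, for the inclusion $B_{K+1}(2e+1) \subseteq A^{k,m}_{K+1}(2e+1)$ in the range $K+1 > 2n+2$, Theorem \ref{w_l-theorem}(1) will not carry you there. The lower bounds on $w_l(g_n)$ for $g_n := 8 f'_k f^m r_n^-(f^2)$ are simply not available: Lemma \ref{w_l(q_n)} computes $w_l$ for $q_n$, not for the corrected polynomial $r_n^- = \wq_n$, and at the critical index $l' = a(n)+1$ (when $b(n)>0$) the correction is designed precisely so that the $q_n$-term and the correction have \emph{equal} $w_{l'}$, so by Lemma \ref{calc-rules-w_l}(3) the resulting $w_{l'}(g_n)$ is strictly larger but \emph{indeterminate}. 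Since Theorem \ref{w_l-theorem}(1) is only a sufficient condition, you cannot conclude. The paper instead iterates Lemma \ref{mult2}: starting from $g_n \in 4 \cdot \ZZ[\chi]/I\langle 2n+2\rangle$ (Corollary \ref{r_n-properties}, first item) one gets $2^j g_n \in 4 \cdot \ZZ[\chi]/I\langle 2n+2+j\rangle$ directly, with no $w_l$-computations at all. You should adopt that route; it also lets you prove this inclusion once, outside the induction.

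Second and more seriously, for $A^{k,m}_{K+1} \subseteq B_{K+1}$ you pick the \emph{smallest} index $n_0$ with an odd coefficient. That is the wrong extremum, and it is exactly why you then get stuck on what you call ``the hard part.'' The paper sets $M = \{n \mid 2n+2 \leq K-1,\ a_n \text{ odd}\}$ and works with $\max(M)$. After reducing to $\sum_{n \in M} 2^{K-2n-3} r_n^- \in A^{k,m}_K$, it multiplies the corresponding element of $\QQ[\chi]/I\langle K\rangle$ by $(1-\chi)^{2\max(M)}$. For every $n < \max(M)$, the third item of Corollary \ref{r_n-properties} with $s=1$ gives $g_n (1-\chi)^{2n+1} \in 4 \cdot \ZZ[\chi]/I\langle 2n+3\rangle$, and iterating Lemma \ref{mult2} and multiplying by the remaining nonnegative power $(1-\chi)^{2\max(M) - 2n - 1}$ shows those summands land in $4 \cdot \ZZ[\chi]/I\langle K\rangle$; subtracting them isolates the $\max(M)$ term. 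The second item of Corollary \ref{r_n-properties}, $g_{\max(M)}(1-\chi)^{2\max(M)} \notin 4 \cdot \ZZ[\chi]/I\langle 2\max(M)+3\rangle$, propagated by Lemma \ref{mult2}, then gives the contradiction. If you instead fix the smallest $n_0$ and multiply by $(1-\chi)^{2n_0}$, the summands with $n > n_0$ do \emph{not} become divisible (the power of $1-\chi$ is too low compared to the barrier $2n$ from the second item of Corollary \ref{r_n-properties}), so there is no way to isolate the obstruction. You have correctly identified the danger of cancellation among the various $r_n^-$; the fix is the choice of $\max(M)$ plus multiplication by $(1-\chi)^{2\max(M)}$, which is the key idea missing from your write-up. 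Also note that Theorem \ref{w_l-theorem}(2), Lemma \ref{c_min=odd} and Lemma \ref{c_min-add} are not used in this theorem at all; they are already absorbed into the construction of $r_n^-$ (Proposition \ref{K(q-n)-prop}, Proposition \ref{r_n-prop}), and the paper's proof of Theorem \ref{A=B-odd} relies only on Corollary \ref{r_n-properties} and Lemma \ref{mult2}. Finally, the closing remark on the case $d=2e$ is off-topic for this statement (it is Theorem \ref{A=B-even}), and the reduction there is the algebraic change of variable $\beta$ applied with $m=2$, not the geometric suspension.
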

\begin{proof}
Since $8 \cdot f'_k \cdot f^m \cdot r^-_n(f^2) \in 4 \cdot
\ZZ[\chi]/ I \langle 2n+2 \rangle$, Lemma \ref{mult2} implies
\[
8 \cdot f'_k \cdot f^m \cdot 2^{\max\{K-2n-2,0\}} \cdot r^-_n(f^2)
\in 4 \cdot \ZZ[\chi]/ I \langle K \rangle.
\]
This proves $A^{k,m}_K(2e+1) \supseteq B_K(2e+1)$.

It remains to show $A^{k,m}_K(2e+1) \subseteq B_K(2e+1)$. We will
give a proof by induction with respect to $K$. For the basis case
$K=1$ we get
\[
A^{k,m}_1(2e+1) = \left\lbrace q \in \ZZ[x] \mid \deg(q) \leq e-1
\right\rbrace = B_1(2e+1).
\]
Inductive step: We assume that $A^{k,m}_{K-1}(2e+1) \subseteq
B_{K-1}(2e+1)$ ($K \geq 2$) and have to prove $A^{k,m}_K(2e+1)
\subseteq B_K(2e+1)$. Let $q \in A^{k,m}_K(2e+1)$. Since
\[
A^{k,m}_K(2e+1) \subseteq A^{k,m}_{K-1}(2e+1) \subseteq
B_{K-1}(2e+1),
\]
we can write $q$ as $q = \sum_{n=0}^{e-1} a_n \cdot
2^{\max\{K-2n-3,0\}} \cdot r^-_n$ with $a_n \in \ZZ$. The polynomial
$q$ lies in $B_K(2e+1)$ if $a_n$ is even for all $n$ with $2n+2 \leq
K-1$. We set
\[
M := \left\lbrace 0 \leq n \leq e-1 \mid 2n+2 \leq K-1, \mathrm{\;}
a_n \mathrm{\; is \; odd} \right\rbrace.
\]
It remains to show $M = \emptyset$. We will give a proof by
contradiction and assume $M \neq \emptyset$. Since $q \in
A^{k,m}_K(2e+1)$ and
\begin{align*}
& \sum_{n \notin M} a_n \cdot 2^{\max\{K-2n-3,0\}} \cdot r^-_n + \sum_{n \in M} (a_n-1) \cdot 2^{\max\{K-2n-3,0\}} \cdot r^-_n\\
& \qquad \qquad \in B_K(2e+1) \subseteq A^{k,m}_K(2e+1),
\end{align*}
we have
\[
\sum_{n \in M} 2^{K-2n-3} \cdot r^-_n \in A^{k,m}_K(2e+1).
\]
This implies
\begin{equation}\label{A=B-eq1}
\sum_{n \in M} 2^{K-2n} \cdot f'_k \cdot f^m \cdot r^-_n(f^2) \cdot
(1-\chi)^{2 \cdot \max(M)} \in 4 \cdot \ZZ[\chi]/ I \langle K
\rangle.
\end{equation}
Using Lemma \ref{mult2} we conclude from
\[
8 \cdot f'_k \cdot f^m \cdot r^-_n(f^2) \cdot (1-\chi)^{2n+1} \in 4
\cdot \ZZ[\chi]/ I \langle 2n+3 \rangle
\]
that
\[
2^{K-2n} \cdot f'_k \cdot f^m \cdot r^-_n(f^2) \cdot (1-\chi)^{2n+1}
\in 4 \cdot \ZZ[\chi]/ I \langle K \rangle
\]
and hence, if $n < \max (M)$ then
\begin{equation}\label{A=B-eq2}
2^{K-2n} \cdot f'_k \cdot f^m \cdot r^-_n(f^2) \cdot (1-\chi)^{2
\cdot \max(M)} \in 4 \cdot \ZZ[\chi]/ I \langle K \rangle.
\end{equation}
The property
\[
8 \cdot f'_k \cdot f^m \cdot r^-_{\max(M)}(f^2) \cdot (1-\chi)^{2
\cdot \max(M)} \notin 4 \cdot \ZZ[\chi]/ I \langle 2 \cdot \max(M) +
3 \rangle
\]
and Lemma \ref{mult2} imply
\begin{equation}\label{A=B-eq3}
2^{K-2 \cdot \max(M)} \cdot f'_k \cdot f^m \cdot r^-_{\max(M)}(f^2)
\cdot (1-\chi)^{2 \cdot \max(M)} \notin 4 \cdot \ZZ[\chi]/ I \langle
K \rangle.
\end{equation}
Combining (\ref{A=B-eq1}), (\ref{A=B-eq2}) and (\ref{A=B-eq3}) we
obtain the desired contradiction.
\end{proof}

We now come to the case $d=2e$.

\begin{defn}
Define $\beta: \ZZ[x] \to \ZZ[x]$ by
\[
\beta(q)(x) := \frac{x \cdot q(x) - q(1)}{x-1}
\]
and set
\[
r^+_n := \beta(r^-_n) \mathrm{\; for \;} n \geq 0.
\]
\end{defn}

Notice that $\beta$ is an isomorphism of $\ZZ$-modules and preserves
the degree of the polynomial. The inverse is given by
\[
\beta^{-1}(q)(x) = \frac{(x-1) \cdot q(x) + q(0)}{x}.
\]
$r^+_n$ is a polynomial of degree $n$ with leading coefficient $1$.

\begin{thm}\label{A=B-even}
Let $K, k \geq 1$, $e \geq 3$. Then
\[
A^k_K(2e) = B_K(2e).
\]
\end{thm}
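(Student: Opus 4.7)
My plan is to reduce the statement to the odd-dimensional case Theorem~\ref{A=B-odd} via the $\ZZ$-module homomorphism $\beta$ introduced just above. Since $r^+_n$ is defined as $\beta(r^-_n)$, the set $B_K(2e)$ is by construction the image $\beta(B_K(2e-1))$, so it suffices to establish the identification $A^k_K(2e) = \beta(A^{k,2}_K(2e-1))$ and then invoke Theorem~\ref{A=B-odd} applied with $m=2$ to the odd dimension $2(e-1)+1 = 2e-1$. This is legal because the hypothesis $e \geq 3$ gives $e-1 \geq 2$, which is the hypothesis of Theorem~\ref{A=B-odd}.

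The translation rests on the algebraic identity
\[
(x - 1) \cdot \beta(p)(x) \; = \; x \cdot p(x) \; - \; p(1),
\]
which is immediate from the defining formula for $\beta$. Substituting $x = f^2$ and multiplying by $8 \cdot f'_k$ yields
\[
8 \cdot f'_k \cdot (f^2 - 1) \cdot \beta(p)(f^2) \; = \; 8 \cdot f'_k \cdot f^2 \cdot p(f^2) \; - \; 8 \cdot f'_k \cdot p(1)
\]
in $\QQ[\chi]/I\langle K \rangle$. Since $f'_k \in \RhG$ by Lemma~\ref{f_k-lem} and $p(1)\in\ZZ$, the correction term $8 \cdot f'_k \cdot p(1)$ lies in $8 \cdot \ZZ[\chi]/I\langle K\rangle \subseteq 4 \cdot \ZZ[\chi]/I\langle K\rangle$ and is thus irrelevant modulo $4 \cdot \ZZ[\chi]/I\langle K\rangle$. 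Hence $\beta(p)$ fulfils the condition defining $A^k_K(2e)$ if and only if $p$ fulfils the condition defining $A^{k,2}_K(2e-1)$.

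To upgrade this pointwise equivalence to the set-level equality $A^k_K(2e) = \beta(A^{k,2}_K(2e-1))$, I would verify that $\beta$ preserves the degree and the leading coefficient of any nonzero polynomial, with inverse given by $\beta^{-1}(q)(x) = ((x-1)q(x) + q(0))/x$. Consequently $\beta$ restricts to a $\ZZ$-module isomorphism on polynomials of degree at most $e-2$; this is precisely the degree range appearing in $A^k_K(2e)$, where $c-1 = \lfloor(2e-1)/2\rfloor - 1 = e-2$, as well as in $A^{k,2}_K(2e-1)$, where the degree bound is likewise $(e-1)-1 = e-2$.

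Combining these steps, Theorem~\ref{A=B-odd} delivers $A^{k,2}_K(2e-1) = B_K(2e-1)$, and applying $\beta$ to both sides together with $r^+_n = \beta(r^-_n)$ produces $A^k_K(2e) = B_K(2e)$. The only real content of the argument is the algebraic identity $(x-1)\beta(p)(x) = xp(x) - p(1)$ and the observation that the correction term $8 f'_k p(1)$ is killed modulo $4$; I do not anticipate a serious obstacle, since the entire reduction has been engineered into the definition of $r^+_n$.
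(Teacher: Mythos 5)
Your proposal is correct and is essentially the paper's own proof: both reduce to Theorem~\ref{A=B-odd} with $m=2$ via the isomorphism $\beta$, the paper running the chain of equivalences through $\beta^{-1}(q)$ while you parametrize by $p$ and apply $\beta$, which is the same computation (and indeed $q(0)=p(1)$ for $q=\beta(p)$, so your correction term matches the paper's). Your checks of the degree bounds and the hypothesis $e\geq 3$ are accurate.
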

\begin{proof}
Recall that
\begin{align*}
& A^k_K(2e) := \left\lbrace q \in \ZZ[x] \mid \deg(q) \leq e-2, \; 8 \cdot f'_k \cdot (f^2-1) \cdot q(f^2) \in 4 \cdot \ZZ[\chi]/ I \langle K \rangle \right\rbrace,\\
& B_K(2e) := \left\lbrace \sum_{n=0}^{e-2} a_n \cdot
2^{\max\{K-2n-2,0\}} \cdot r^+_n \mid a_n \in \ZZ \right\rbrace.
\end{align*}
For $q \in \ZZ[x]$ with $\deg(q) \leq e-2$ we conclude
\begin{align*}
& q \in A^k_K(2e) \Longleftrightarrow\\
& \Longleftrightarrow 8 \cdot f'_k \cdot (f^2-1) \cdot q(f^2) \in 4 \cdot \ZZ[\chi]/ I \langle K \rangle\\
& \Longleftrightarrow 8 \cdot f'_k \cdot \left((f^2-1) \cdot q(f^2) + q(0) \right) \in 4 \cdot \ZZ[\chi]/ I \langle K \rangle \quad (\mathrm{since} \; f'_k \in \RhG)\\
& \Longleftrightarrow 8 \cdot f'_k \cdot f^2 \cdot \beta^{-1}(q)(f^2) \in 4 \cdot \ZZ[\chi]/ I \langle K \rangle\\
& \Longleftrightarrow \beta^{-1}(q) \in A^{k,2}_K(2e-1)\\
& \Longleftrightarrow \beta^{-1}(q) \in B_K(2e-1) \quad (\mathrm{see \; Theorem \; \ref{A=B-odd}})\\
& \Longleftrightarrow q \in B_K(2e).
\end{align*}
\end{proof}

\small
\bibliography{lens-spaces}

\def\cprime{$'$}
\begin{thebibliography}{LdM71}

\bibitem[AS68]{Atiyah-Singer-III(1968)}
M.~F. Atiyah and I.~M. Singer.
\newblock The index of elliptic operators. {III}.
\newblock {\em Ann. of Math. (2)}, 87:546--604, 1968.

\bibitem[CF64]{Conner-Floyd(1964)}
P.~E. Conner and E.~E. Floyd.
\newblock {\em Differentiable periodic maps}.
\newblock Ergebnisse der Mathematik und ihrer Grenzgebiete, N. F., Band 33.
  Academic Press Inc., Publishers, New York, 1964.

\bibitem[HT00]{Hambleton-Taylor(2000)}
Ian Hambleton and Laurence~R. Taylor.
\newblock A guide to the calculation of the surgery obstruction groups for
  finite groups.
\newblock In {\em Surveys on surgery theory, Vol. 1}, volume 145 of {\em Ann.
  of Math. Stud.}, pages 225--274. Princeton Univ. Press, Princeton, NJ, 2000.

\bibitem[LdM71]{LdM(1971)}
S.~L{\'o}pez~de Medrano.
\newblock {\em Involutions on manifolds}.
\newblock Springer-Verlag, New York, 1971.
\newblock Ergebnisse der Mathematik und ihrer Grenzgebiete, Band 59.

\bibitem[L{\"u}c02]{Lueck(2001)}
Wolfgang L{\"u}ck.
\newblock A basic introduction to surgery theory.
\newblock In {\em Topology of high-dimensional manifolds, No. 1, 2 (Trieste,
  2001)}, volume~9 of {\em ICTP Lect. Notes}, pages 1--224. Abdus Salam Int.
  Cent. Theoret. Phys., Trieste, 2002.

\bibitem[Mil66]{Milnor(1966)}
J.~Milnor.
\newblock Whitehead torsion.
\newblock {\em Bull. Amer. Math. Soc.}, 72:358--426, 1966.

\bibitem[MM79]{Madsen-Milgram(1979)}
Ib~Madsen and R.~James Milgram.
\newblock {\em The classifying spaces for surgery and cobordism of manifolds},
  volume~92 of {\em Annals of Mathematics Studies}.
\newblock Princeton University Press, Princeton, N.J., 1979.

\bibitem[MS74]{Morgan-Sullivan(1974)}
John~W. Morgan and Dennis~P. Sullivan.
\newblock The transversality characteristic class and linking cycles in surgery
  theory.
\newblock {\em Ann. of Math. (2)}, 99:463--544, 1974.

\bibitem[Pet70]{Petrie(1970)}
Ted Petrie.
\newblock The {A}tiyah-{S}inger invariant, the {W}all groups {$L\sb{n}(\pi
  ,\,1)$}, and the function {$(te\sp{x}+1)/(te\sp{x}-1)$}.
\newblock {\em Ann. of Math. (2)}, 92:174--187, 1970.

\bibitem[Ran81]{Ranicki(1981)}
Andrew Ranicki.
\newblock {\em Exact sequences in the algebraic theory of surgery}, volume~26
  of {\em Mathematical Notes}.
\newblock Princeton University Press, Princeton, N.J., 1981.

\bibitem[Ran92]{Ranicki(1992)}
A.~A. Ranicki.
\newblock {\em Algebraic {$L$}-theory and topological manifolds}, volume 102 of
  {\em Cambridge Tracts in Mathematics}.
\newblock Cambridge University Press, Cambridge, 1992.

\bibitem[Ran08]{Ranicki(2008)}
Andrew Ranicki.
\newblock A composition formula for manifold structures.
\newblock {\em Preprint, arXiv: math.AT.0608705}, 2008.

\bibitem[Wal99]{Wall(1999)}
C.~T.~C. Wall.
\newblock {\em Surgery on compact manifolds}, volume~69 of {\em Mathematical
  Surveys and Monographs}.
\newblock American Mathematical Society, Providence, RI, second edition, 1999.
\newblock Edited and with a foreword by A. A. Ranicki.

\bibitem[You98]{Young(1998)}
Carmen~M. Young.
\newblock Normal invariants of lens spaces.
\newblock {\em Canad. Math. Bull.}, 41(3):374--384, 1998.

\end{thebibliography}
\bibliographystyle{alpha}

\end{document}